\documentclass[11pt]{amsart}
\usepackage{amsmath}
\usepackage{amsfonts}
\usepackage{amssymb}
\usepackage{amscd}
\usepackage{epsfig}

\catcode `\@=11
\def\numberbysection{\@addtoreset{equation}{section}
         \renewcommand{\theequation}{\thesection.\arabic{equation}}}
\numberbysection
\def\subsubsection{\@startsection{subsubsection}{3}%
  \normalparindent{.5\linespacing\@plus.7\linespacing}{-.5em}%
  {\normalfont\bfseries}}
\setcounter{tocdepth}{2}

\newlength{\ghost}

\raggedbottom

\newtheorem{thm}{Theorem}[section]
\newtheorem{lem}[thm]{Lemma}
\newtheorem{prop}[thm]{Proposition}
\newtheorem{cor}[thm]{Corollary}

\newtheorem{conj}[thm]{Conjecture}

\theoremstyle{definition}

\newtheorem{df}[thm]{Definition}
\newtheorem{ex}[thm]{Example}

\newtheorem{rmk}[thm]{Remark}
\newtheorem{nota}[thm]{Notation}
\newtheorem{assump}{Assumption}

\def\Sn{\mathbb{S}_n}

\def\Z{\mathbb{Z}}

\def\t{\tau}
\def\a{\alpha}
\def\b{\beta}
\def\g{\gamma}

\def\Rp{{\mathbb{R}_{>0}}}

\def\O{\mathcal{O}}


\def\Cact{\mathcal{C}act}
\def\Cacti{\mathcal{C}acti}

\def\Arc{\mathcal{A}rc}

\def\Gtree{{\mathcal{GT}ree}}
\def\GTree{\Gtree}
\def\Sttree{{\mathcal S}t\Gtree}
\def\LGtree{{\mathcal{LGT}ree}}
\def\CGtree{{\mathcal{CGT}ree}}
\def\Lintree{{\mathcal{LT}ree}}
\def\Tree{{\mathcal{T}ree}}
\def\St{{\mathcal S}t}
\def\Fat{{}^{\rm Fat}}
\def\T{T}
\def\H{H}
\def\G{G}

\def\N{{\mathbb N}}

\def\R{{\mathbb R}}

\newcommand\val[1]{\text{\it val}(#1)}

\def\Agrs{\A_{g,r}^{s}}
\def\comp{Comp}
\def\graphs{\mathcal{G}}
\def\A{\mathcal{A}}
\def\arcgraphs{\overline{\mathcal{G}}}
\def\carcgraphs{\arcgraphs^{e}}

\def\ba{\bar{\a}}
\def\bg{\bar{\g}}

\def\Arcn{{\mathcal{A}rc_{\#}}}
\def\Darc{\mathcal{DA}rc}

\def\SCC{\mathcal{S}CC}
\def\Corol{\mathcal{C}orol}


\def\seq{seq}
\def\defect{dt}
\def\edefect{\epsilon}
\def\sth{st_H}
\def\stg{st_G}

\def\colim{{\rm colim}}

\def\K{{\mathcal K}}

\def\MS{{\mathcal MS}}
\def\calS{{\mathcal S}}
\def\calO{{\mathcal O}}
\def\calD{{\mathcal D}}

\def\cell{\overline {C}}

\def\gap{gap}
\begin{document}

\title[Dimension vs.\ genus]
{Dimension vs.\ Genus: A surface realization of the little
$k$--cubes and an $E_{\infty}$--operad}

\author
[Ralph M.\ Kaufmann]{Ralph M.\ Kaufmann}
\email{rkaufman@math.purdue.edu}

\address{Purdue University, Department of Mathematics, 150 N. University St.,
West Lafayette, IN 47907--2067}

\begin{abstract}
We define a new $E_{\infty}$ operad  based on surfaces
with foliations which contains $E_k$ suboperads. We construct CW
models for these operads and provide applications of these models by
giving actions on Hochschild complexes --thus making contact with
string topology--, by giving explicit cell representatives
 for the Dyer-Lashof-Cohen operations for the
2-cubes and by constructing new $\Omega$ spectra. The underlying
novel principle is that we can trade genus in the surface
representation vs.\ the dimension $k$ of the little $k$--cubes.
\end{abstract}

\maketitle

\section*{Introduction}
The fact \cite{cact}   that the cacti operad introduced in
\cite{vor} has an $E_2$ suboperad  has been instrumental for the
considerations of string topology \cite{CS,Sul}. In terms of
algebraic actions this particular $E_2$ operad has been useful in
describing actions on the Hochschild cochains of an associative
algebra \cite{del}. All these considerations have some form of
physical 1+1 dimensional field theoretical inspiration or
interpretation, which for a mathematician essentially means that one
is dealing with maps of surfaces. In particular the $E_2$ structure
of the little discs and cacti is at home in such a 2--dimensional
geometry.

In this context, the natural question arises if the higher order
$E_k$ operads can also be realized on surfaces. According to the
yoga of string theory, two dimensional structures should be enough.
In particular one should be able to describe higher dimensional
objects, like branes, with strings. In our setting this translates
to the expectation that there should be surface realizations for
$E_k$ operads. The fulfillment of this expectation is exactly what
we accomplish. The novel feature is that these surfaces are of
arbitrary genus and not only of genus zero. Now, as soon as one
introduces genus into an operadic structure, the genus ceases to be
bounded. This is why we first construct an $E_{\infty}$ operad using
surfaces with boundaries of all genera. The way we identify
$E_{\infty}$ structure is to invoke Berger--Fiedorowicz's theory
\cite{Berger,Fiedobscure} of $E_n$ and $E_{\infty}$ operads. Hence
we obtain a filtration of our $E_{\infty}$ operad by $E_n$ operads.
This filtration is roughly by genus and it exhibits  an interesting
periodicity. The $E_{2k}$ and $E_{2k+1}$ operads are both realized
basically by genus $k$ surfaces with boundaries. More precisely,
their operadic degree $2$ components are realized on a surface of
the indicated genus. The specific difference between the operads
$E_{2k}$ and $E_{2k+1}$ is identified
 to be the possibility to twist on one particular boundary, namely
the ``out boundary''.

The method we use for the construction is based on the $\Arc$
operad, whose formalism we briefly review. Just like for cacti there
will be a certain tree condition. Since although arc graphs are for
us the most natural language, the language of ribbon graphs is more
widely used, we provide an Appendix with a dual description in terms
of ribbon graphs. If one so wishes using this dictionary one can
translate all the results into this purely combinatorial language
thereby sacrificing their geometric origin.

In order to produce the operads, we will have to use a new technique
of ``stabilizing''. It is clear that some identifications have to be
made, since we know from representations or better algebras over the
operad $H_*(E_2)$ that the Gerstenhaber bracket does not always
vanish and likewise neither does the string bracket. In other words
we should not be able to find a homotopy which ``kills off'' the
cell for the bracket in the usual formalism of arcs and moduli
spaces \cite{hoch1,hoch2}. In fact, for a Frobenius algebra, we know
from \cite{hoch2} that the obstruction to ``kill'' the bracket is
the Euler element of the algebra. On the geometric level we can
force the homotopy, by identifying boundary components of a cellular
compactification with cells comprised of lower genus surfaces. This
 is what our stabilization procedure formalizes. In
the algebraic setting this stabilization is possible in the case
that the algebra is semi--simple and has a particularly simple
metric.

Our constructions can be generalized to the full $\Arc$ spaces and
will yield a new way to stabilize moduli spaces. In the future, we
also expect to find explicit formulas for the higher
Dyer--Lashof--Cohen operations using the new geometric insights from
the surface formalism.

The paper is organized as follows: In the first section, we review
the basic setup for the arc operad to make the paper more self
contained. The second section introduces the stabilization for the
various tree operads. The straightforward construction yields
operads without a $0$--term --just like  cacti. In order to obtain a
$0$--term for the operad we consider a thickening of the
construction. Without the thickening we can add a $0$--term, but
then the associativity will hold only up to homotopy. In the third
section, we show that stabilization and thickening yield a cellular
$E_{\infty}$ operad in the sense of Berger. There is a filtration on
the $E_{\infty}$ operad giving rise to a new surface representation
for  $E_k$ operads. Without the $0$--terms we can omit the
thickening step. The fourth section passes to the chain level and
gives cellular models as well as operations, such as $\cup_i$ and
the Dyer--Lashof--Cohen operations. For the chain level, as we show,
one can omit the thickening procedure as the induced structure of
the stabilization is already an operad even if one includes a
$0$--term. The last section contains applications to string topology
and Hochschild actions as well as the construction of a new $\Omega$
spectrum. We also discuss the generalizations to the $\Arc$ operad
and the Sullivan PROP. The Appendix contains the dual ribbon graph
picture.

\section*{Acknowledgments}
We wish to thank the organizers of the Postinkov Memorial
conference. This was a wonderful place to share ideas and gain new
insights. We also thank the editors of this volume for encouraging
us to stop procrastinating and finally write down these results that
had long been announced.

This paper owes its existence to discussions which have been carried
on over by now some years with Jim McClure whom it is a pleasure to
thank. We also wish to thank Craig Westerland for discussions on his
work and Paolo Salvatore for comments.
Some of the research was carried out while we were visiting
the Max--Planck--Institute in Bonn and we gratefully acknowledge its
support.

\section{Reviewing the $\Arc$ operad}

\label{arcsection} In order to be more self--contained, we begin
with reviewing the constructions of the $\Arc$ operad of \cite{KLP}.
Ultimately we will specialize to a suboperad in this paper, but the
gluing procedure is of course still the same. Furthermore, the more
general point of view will allow for some generalizations in the
future. We will follow \cite{hoch1} for this abbreviated exposition.
The reader familiar with these constructions may skip ahead only
consulting \S\ref{subspaces} for the definition of the new
suboperads we will consider.

\subsection{Spaces of graphs on surfaces}
Fix an oriented surface $F_{g,r}^s$ of genus $g$ with $s$ punctures
and $r$ boundary components which are labeled from $0$ to $r-1$,
together with marked points on the boundary, one for each boundary
component. We call this data $F$ for short if no confusion can
arise.

The piece of the $\Arc$ operad supported on $F$ will be an open
subspace of a space $\Agrs$. The latter space is a CW complex whose
cells are indexed by graphs on the surface $F_{g,r}^s$ up to the
action of the pure mapping class group PMC which is the group of
orientation preserving homeomorphisms of $F_{g,r}^s$ modulo
homotopies that pointwise fix  the set which is the union of the set
of the marked points on the boundary and the set of punctures. A
quick review in terms of graphs is as follows.

\subsubsection{Embedded Graphs}
By an embedding of a graph $\Gamma$ into a surface $F$, we mean an
embedding  $i:|\Gamma|\rightarrow F$ with the conditions

 \begin{itemize}
\item[i)] $\Gamma$ has at least one edge.

\item[ii)] The vertices map bijectively to the marked points on
the boundaries.

\item[iii)] No images of two edges are homotopic to each other, by homotopies
fixing the endpoints.

\item [iv)] No image of an edge is homotopic to a part of the
boundary, again by homotopies fixing the endpoints.
\end{itemize}

Two embeddings are equivalent if there is a homotopy of embeddings
of the above type  from one to the other. Note that such a
homotopy is necessarily constant on the vertices.

The images of the edges are called arcs, and the set of connected
components of $F\setminus i(\Gamma)$ are called complementary
regions.

Changing representatives in a class  yields natural bijections of
the sets of arcs and connected components of $F\setminus
i(\Gamma)$ corresponding to the different representatives.
 We can therefore associate to each equivalence class of embeddings
its sets of arcs together
with their incidence conditions and connected
components --- strictly speaking of course the equivalence classes of these
objects.

\begin{df}
By a graph $\g$ on a surface we mean a triple $(F,\Gamma,[i])$ where
$[i]$ is an equivalence class of embeddings of $\Gamma$ into that
surface. We will denote the isomorphism class of the set of
complementary regions by $\comp(\g)$. We will also set
$|\g|=|E_{\Gamma}|$, were $E_{\Gamma}$ is the set of edges of
$\Gamma$. Fixing the surface $F$, we will call the set of graphs on
a surface $\graphs(F)$.
\end{df}

\subsubsection{A linear order on arcs}
Notice that due to the orientation of the surface the graph
inherits an induced linear order of all the flags at every vertex
$F(v)$ from the embedding. Furthermore there is even a linear
order on all flags by enumerating the flags first according to the
boundary components on which their vertex lies and then according
to the linear order at that vertex. This induces a linear order on
all edges by enumerating the edges by the first appearance of a
flag of that edge.

\subsubsection{The poset structure}
The set of such graphs on a fixed surface $F$ is a poset. The
partial order is given by calling $(F,\Gamma',[i'])\prec
(F,\Gamma,[i])$ if $\Gamma'$ is a subgraph of $\Gamma$ with the
same vertices and $[i']$ is the restriction of $[i]$ to $\Gamma'$.
In other words, the first graph is obtained from the second by
deleting some arcs.

 We
associate a simplex $\Delta(F,\Gamma,[i])$ to each such graph.
$\Delta$ is the simplex whose vertices are given by the set of
arcs/edges enumerated in their linear order. The face maps are
then given by deleting the respective arcs. This allows us to
construct a CW complex out of this poset.

\begin{df}
Fix $F=F_{g,r}^s$. The space $\A_{g,r}^{\prime s}$ is the space
obtained by gluing the simplices $\Delta(F,\Gamma',[i'])$ for all
graphs on the surface according to the face maps.
\end{df}

The pure mapping class group naturally acts on $\A_{g,r}^{\prime s}$
and has finite isotropy \cite{KLP}.

\begin{df}
We let $\A_{g,r}^s:= \A_{g,r}^{\prime s}/PMC$ be the quotient space
and call its elements arc families.
\end{df}

\subsubsection{CW structure of $\A_{g,r}^s$}

\begin{df}
Given a graph on a surface, we call its PMC orbit its arc graph. If
$\g$ is a graph on a surface, we denote by $\bar{\g}$ its arc graph
or PMC orbit. We denote the set of all arc graphs of a fixed surface
$F$ by $\arcgraphs(F)$. A graph is called exhaustive if there are no
vertices $v$ with valence $\val{v}=0$. This condition is invariant
under $PMC$ and hence we can speak about exhaustive arc graphs.  The
set of all exhaustive arc graphs on $F$ is denoted by
$\carcgraphs(F)$.
\end{df}

Since the incidence conditions  are preserved, we can set
$|\bar{\g}|=|\g|$ where $\g$ is any representative and likewise
define $\comp(\bar{\g})$. We call an arc graph
 exhaustive if and only if it contains no isolated vertices, that is
 vertices with $\val{v}=0$.

Now by construction  it is clear that $\Agrs$ is realized as a CW
complex which has one cell  of
dimension $|\g|-1$ for each arc graph $\bar{\g}$. Moreover the cell for a given class of graphs is
actually a map of a simplex whose vertices correspond to the arcs in
the order discussed above. The attaching maps are given by deleting
edges and identifying the resulting face  with its image. Due to the
action of $PMC$ some of the faces  may become identified by these
maps, so that the image will not necessarily be a simplex. The open
part of the cell will however be homeomorphic to an open simplex,
which can be taken as one of its preimages. The PMC action acts
on the graph and hence acts simplicially. Let $C(\ba)$ be
the image of the cell and $\dot C(\ba)$ be its interior, then
\begin{equation}
\A_{g,r}^s=\cup_{\ba\in \arcgraphs(F_{g,r}^s)} C(\ba),\quad
\A_{g,r}^s=\amalg_{\ba\in \arcgraphs(F_{g,r}^s)} \dot C(\ba)
\end{equation}
 Let $\Delta^n$ denote the standard
$n$--simplex and $\dot \Delta$ its interior then
 $\dot C(\g)=\mathbb R^{|E_{\G}|}_{>0}/\mathbb{R}_{>0}=\dot
\Delta^{|E_{\Gamma}|-1}=:C(\Gamma)$ which only depends on the underlying graph
$\Gamma$ of $\g$.
 This also  means that the space $\Agrs$ is
filtered by the cells of dimension less than or equal to $k$.

\subsubsection{Elements of the $\Agrs$ as projectively weighted graphs}
Using barycentric coordinates for the open part of the cells the
elements of $\Agrs$ are given by specifying an arc graph together
with a map $w$ from the edges of the graph $E_{\Gamma}$ to
$\mathbb{R}_{>0}$ assigning a weight to each edge s.t.\ the sum of
all weights is $1$.

Alternatively, we can regard the map $w:E_{\Gamma}\rightarrow
\mathbb{R}_{>0}$ as an equivalence class under the equivalence
relation of, i.e.\ $w\sim w'$ if $\exists \lambda \in
\mathbb{R}_{>0}$ s.t.~ $\forall e\in E_{\Gamma}:   w(e)=\lambda w'(e)$.
That is $w$ is a projective metric. We call the set of $w(e)$ the
projective weights of the edges.
 In the limit, when the projective weight of an
edge goes to zero, the edge/arc is deleted, see \cite{KLP} for more
details. For an example, see Figure \ref{f02delta}, which is
discussed below in Example \ref{arcex}.

An  element $\alpha\in \Agrs$ can be described by a tuple
$\alpha=(F,\Gamma,\overline{[i]},w)$ where $F$ and $\Gamma$ are as
above, $\overline{[i]}$ is a PMC orbit of an equivalence class of
embeddings and $w$ is a projective metric for $\Gamma$. Alternatively it
can be described by a tuple $(\bg,w)$ where $\bg\in \arcgraphs(F)$
and $w$ is a projective metric for the underlying abstract graph
$\Gamma$.
\begin{ex}
\label{arcex} $\A_{0,2}^0=S^1$. Up to PMC there is a unique graph
with one edge and a unique graph with two edges. The former gives a
zero--cell and the latter  gives a one--cell whose source is a
1--simplex. Its two subgraphs with one edge that correspond to the
boundary lie in the same orbit of the action of PMC and thus are
identified to yield $S^1$. The fundamental cycle is given by
$\Delta$ of Figure \ref{f02delta}. Identifying $S^1$ with $\R/\Z$ we
define  $\T_s$ to be  the element corresponding to $s\in S^1$ as
depicted in  Figure \ref{f02delta}.
\end{ex}

\begin{figure}
\epsfxsize = 4in \epsfbox{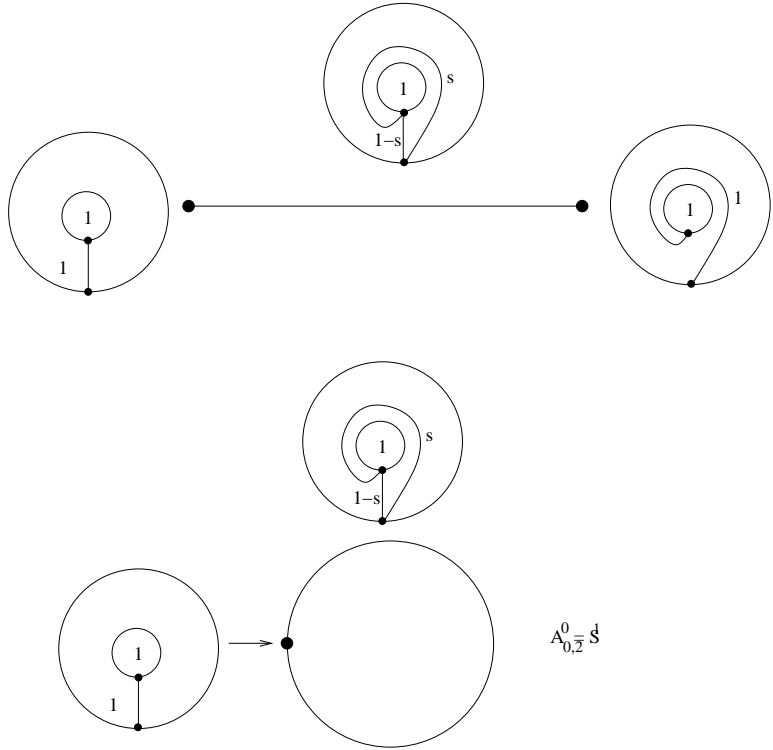} \caption{\label{f02delta}
The space $\A^0_{0,2}$ is given as the CW decomposition of $S^1$
with one $0$--cell and one $1$--cell. It can be thought of as the
quotient of the interval in which the endpoints are identified by
the action of the pure mapping class group. The generator of $CC_*(S^1)$ is
called $\Delta$.}
\end{figure}

\subsubsection{Drawing pictures for Arcs.}
\label{picturepar}
There are several
pictures one can use to view elements of $\A$. In order to draw
elements  it is useful to expand the marked point on the boundary to
an interval called window, and let the arcs end on this interval
according to the linear order. Equivalently, one can mark one point
of the boundary and let the arcs end in their linear order anywhere
{\em but} on this point. We will mostly depict arc graphs in the
latter manner. See Figure \ref{different} for an example of an arc
graph ---all arcs running to the marked points--- and its alternate
depiction with {\em none} of the arcs hitting the marked point and
all arcs having {\em disjoint endpoints}.

\begin{figure}
\epsfxsize = 3in
\epsfbox{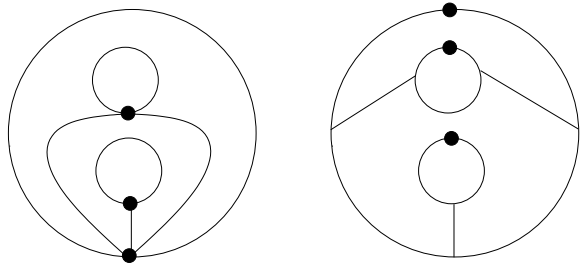}
\caption{\label{different}
An arc graph and its alternate depiction with disjoint arcs not hitting the marked points on the boundary.}
\end{figure}

\begin{nota}
Since in the following we will always be dealing with arc graphs,
we will now omit the over-line in the notation. Hence we will
write $\g\in \arcgraphs(F)$. We also fix that $\Gamma(\g)$ is the
underlying graph. Furthermore elements of $\A_{g,r}^s$ will usually
be called $\a$ and $\b$. If $\a\in \A_{g,r}^s$ we fix that
$\g(\a)$, $\Gamma(\a)$ and $w(\a)$ are the underlying arc graph, its
underlying graph and the projective metric, respectively.
\end{nota}

\subsection{Topological operad structure}
\label{gluingpar}
\subsubsection{The spaces $\Arc(n)$}

\begin{df}
We define $\Arc_g^s(n)\subset \A_{g,n+1}^s$ to be the subset of
those weighted arc graphs whose arc graph is exhaustive. We define
$\Arc(n):=\coprod_{s,g\in \mathbb{N}}\Arc_g^s(n)$.
\end{df}

\subsubsection{Topological description of the gluing } We shall only
give a short recap. The full details are in \cite{KLP}.
To give the composite $\alpha \circ_i\alpha'$ for two arc families
$\alpha=(F,\Gamma,\overline{[i]},w)\in \Arc(m)$ and
$\alpha'=(F',\Gamma',\overline{[i']},w') \in \Arc(n)$ one most
conveniently chooses metrics on $F$ and $F'$. The construction does
not depend on the choice. With this metric, one produces a partially
measured foliation in which the arcs are replaced by bands of
parallel leaves (parallel to the original arc) of width given by the
weight of the arc. For this we choose the window representation and
also make the window tight in the sense that there is no space
between the bands and between the end-points of the window and the
bands. Finally, we put in the separatrices. The normalization we
choose is that the sum of the weights at boundary $i$ of $\alpha$
coincides with the sum of the weights at the boundary $0$, we can
also fix them both to be one. Now when gluing the boundaries, we
match up the windows, which have the same width, and then just glue
the foliations. This basically means that we glue two leaves of the
two foliations if they end on the same point.  We then delete the
separatrices.  Afterwards,  we collect together all parallel leaves
into one band. In this procedure, some of the original bands might
be split or ``cut'' by the separatrices. We assign to each band one
arc with weight given by the width of the consolidated band. If arcs
occur, which do not hit the boundaries, then we simply delete these
arcs. We call these arcs or bands ``closed loops'' and say that
``closed loops appear in the gluing''.

Notice that after gluing there will be no parallel arcs, since all
parallel leaves are collected into {\em one} band and the condition
of being parallel is PMC invariant ---before and after gluing.

\begin{thm}\cite{KLP}
Together with the gluing operations above, the spaces $\Arc$ form
a cyclic operad.
\end{thm}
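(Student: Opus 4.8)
The plan is to verify the cyclic operad axioms for the gluing operation described above, following the strategy of \cite{KLP}. The structure maps $\circ_i:\Arc(m)\times\Arc(n)\to\Arc(m+n-1)$ are defined via the partially measured foliation picture: thicken arcs into bands of parallel leaves, match windows of equal total width at the boundaries being glued, concatenate the foliations, consolidate parallel leaves, and discard closed loops. First I would check that this operation is well defined: the construction uses auxiliary metrics on $F$ and $F'$, so one must argue the resulting weighted arc graph (up to $PMC$) is independent of these choices. The key point is that the measured foliation up to Whitehead equivalence and isotopy depends only on the projective weights, and the gluing of foliations is a topological operation on the glued surface $F\#_i F'$; different metric choices yield isotopic foliations, hence the same element of $\A_{g+g',m+n}^{s+s'}$. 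One also checks exhaustiveness is preserved — every boundary component of the glued surface still meets an arc — which holds because the only arcs that can disappear are closed loops not touching any boundary, and the boundaries other than the two glued ones retain all their incident bands.

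Next I would verify equivariance: the symmetric group actions permute the labels $1,\dots,m$ and $1,\dots,n$ of the ``in'' boundaries, the cyclic structure additionally rotating in the label $0$ (the ``out'' boundary), and one checks $\sigma^*(\alpha)\circ_i\beta$, $\alpha\circ_{\sigma(i)}\beta$, etc., agree in the appropriate sense — this is essentially bookkeeping on boundary labels and does not interact with the foliation surgery. The substantive axiom is associativity, in its two forms: the ``parallel'' case $(\alpha\circ_i\beta)\circ_j\gamma=(\alpha\circ_j\gamma)\circ_i\beta$ for gluings at distinct boundaries of $\alpha$, and the ``sequential'' case $(\alpha\circ_i\beta)\circ_j\gamma=\alpha\circ_i(\beta\circ_j\gamma)$ where $\gamma$ is glued into a boundary of $\beta$. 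For the parallel case the two surgeries are supported in disjoint parts of the surface and manifestly commute. For the sequential case one observes that gluing foliations is associative: whether we first concatenate the $\alpha$-- and $\beta$--foliations and then attach $\gamma$, or first $\beta$ and $\gamma$, the final partially measured foliation on the thrice--glued surface is the same, because concatenation of leaves ``at a common endpoint'' is an associative operation and the consolidation/closed--loop--deletion steps are performed only at the very end and depend only on the final foliation. The cyclic axiom relating $\circ_i$ to the action of the long cycle then follows from the symmetry of the gluing construction in the two boundaries being matched: the normalization makes the widths at boundary $i$ of $\alpha$ and boundary $0$ of $\beta$ equal, and the glued foliation does not distinguish which side was ``out'' and which was ``in.''

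The main obstacle I expect is the bookkeeping in the sequential associativity argument, specifically controlling how bands get ``cut'' by separatrices: a single band in $\alpha$ may be split into several arcs after gluing in $\beta$, and each of those may be split further after gluing in $\gamma$, so one must argue that the partition of the original band's width induced by the two orders of gluing is the same. This reduces to the claim that the set of separatrix--endpoints on the relevant windows, and hence the induced subdivision, is determined by the union of the foliation data and is insensitive to the order in which the two concatenations are carried out — intuitively clear from the foliation picture but requiring care to state precisely. Since the full proof is carried out in \cite{KLP}, I would present the foliation--associativity heuristic as the conceptual core and refer to \cite{KLP} for the complete verification, noting that the cyclic (as opposed to merely operadic) structure is exactly the statement that the gluing treats all $r$ boundary components of a surface on equal footing, with the distinction between ``in'' and ``out'' being only a choice of basepoint $0$ among the labels.
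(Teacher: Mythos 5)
The paper gives no proof of this theorem at all --- it is imported verbatim from \cite{KLP}, with the surrounding text explicitly deferring the full details of the gluing and its verification to that reference. Your outline (well--definedness independent of the auxiliary metrics, equivariance as label bookkeeping, the two forms of associativity with the observation that consolidation and closed--loop deletion can be postponed to the end, and the cyclic structure coming from the symmetry of the gluing in the two matched boundaries) is a correct sketch of the argument carried out in \cite{KLP} and is consistent with how the present paper itself reasons about associativity of the thickened gluing later on.
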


Another way to see the gluing is in terms of duplicating arcs and
gluing the complementary regions. The duplication occurs when
inserting the separatrices or equivalently cutting the bands; see
Figure \ref{gluingex1} for an example.

\begin{figure}
\epsfxsize = \textwidth \epsfbox{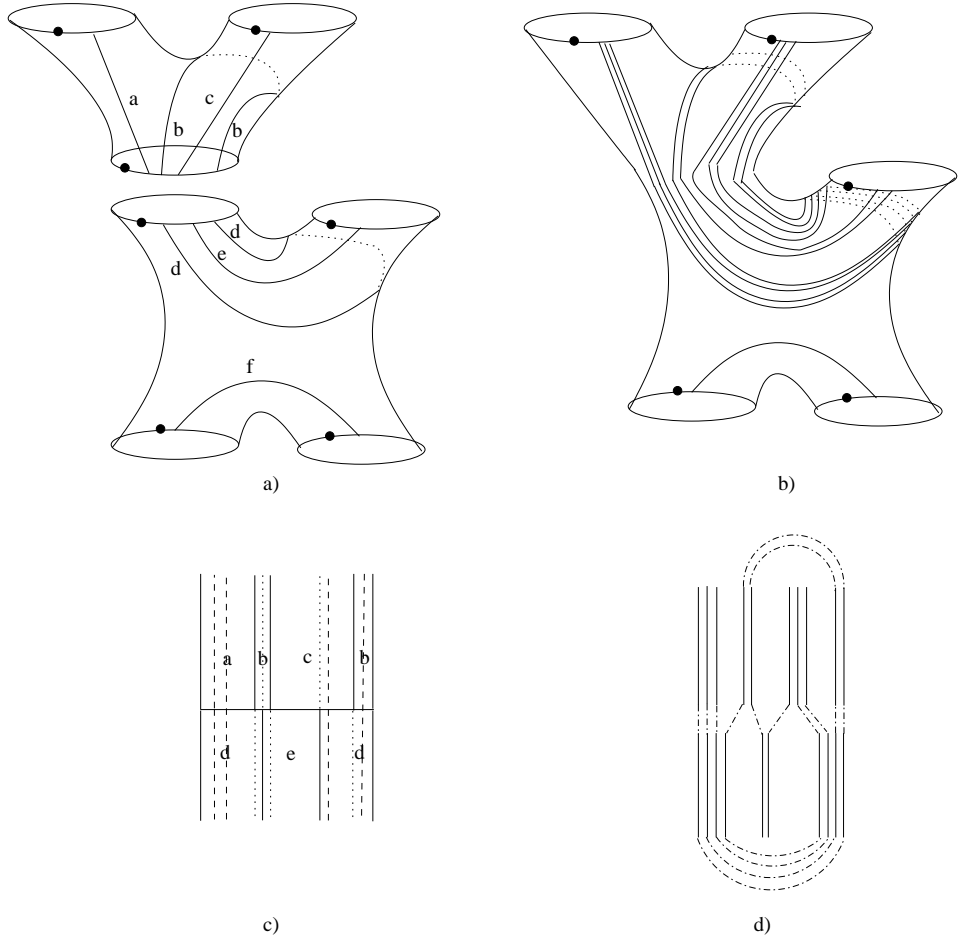}
\caption{\label{gluingex1} Example of gluing
the top arc family to the bottom arc family.
a) The arc graphs which are to be glued
assuming the relative weights a,b,c,d and e as indicated by the
solid lines in c). b) The result of the gluing (the weights are
according to c). c) The combinatorics of cutting the bands. The
solid lines are the original boundaries, the dotted lines are the
first cuts and the dashed lines represent the recursive cuts. d) The
combinatorics of splitting, and joining flags.}
\end{figure}

\subsubsection{Cutting: ``co--operad structure''}
We will often be interested in the dual structure to gluing, that of
cutting. In order to cut a surface into two components such that
their operadic composition is the original surface we have to
specify the following data: a separating curve $c$ and a point $p$
on $c$. The point $p$ can actually be arbitrary. In order to cut, we
simply cut along $c$ and make the images of $p$ the marked points on
the two now boundaries. Notice that when we glue, $p$ and $c$ just
disappear.

\subsubsection{Subspaces}
\label{subspaces} We would like to recall and introduce the
following notation for subspaces.\\

\begin{tabular}{l|l}
Subspace&Condition\\
\hline\\[-2mm]

$\Arc^s_{\# \; g}(n)\subset \Arc_{g}^s(n)$& complementary regions
are polygons \\&or once punctured
polygons.\\

$\GTree(n) \subset \Arc(n)$ & $s=0$ and all arcs run only
from boundary $0$ to some\\& boundary $i\neq 0$.\\
 $\CGtree(n)\subset \GTree(n)$ &  the {\em cyclic} order
of  the arcs at the boundary $0$\\& is anti-compatible with the
linear order at each\\& other  boundary. I.e.\ if $\prec_i$ is the linear
order at $i$ then\\&  $e\prec_i e'$ is equivalent to
$e'\prec_0
e$.\\

 $\LGtree(n)\subset \GTree(n)$ &  the {\em linear} order
of  the arcs at the boundary $0$\\& is anti-compatible with the
linear order at each\\& other boundary.\\

$\Corol$&exactly one arc for each boundary $i\neq 0$\\& which runs
to boundary $0$.\\
\end{tabular}\\[2mm]

We will use the subscript $cp$ to signify $g=s=0$ and use the notation
$\Tree:=\GTree_{cp}$ and $\Lintree:=\LGtree_{cp}$. Notice that $\CGtree_{cp}=\Tree$, since the condition is guaranteed by the condition $g=s=0$.

\begin{nota}
\label{genusnota}
For a collection of subspaces $\calS(n)$ as above
we will write $\calS_g(n)$ to indicate
that the genus and the number of boundary components are fixed, of course
$\calS(n)=\amalg_g \calS_g(n)$.
The symbol $\calS$ as a space will stand for $\amalg_{g,n} \calS_g(n)$
and as an operad for the collection $\{ \calS(n)\}$.
\end{nota}

\subsubsection{De-projectivized arcs}
In order to get {\em isomorphisms} with cacti \cite{del,vor} one
needs to include a factor of $\Rp$ in these operads. The process was
called de--projectivizing in \cite{KLP}. Skipping this step one
still obtains equivalences.

\label{darcs}
\begin{df}
 Let $\Darc_{g,r}^s := \Arc_{g,r}^s\times \mathbb{R}_{>0}$.
\end{df}
 The elements of $\Darc$ are graphs on surfaces with a
metric, i.e.\ a function  $w:E_{\Gamma}\rightarrow
\mathbb{R}_{>0}$. Furthermore $\Darc$ is a cyclic operad
equivalent to $\Arc$ \cite{KLP}. The operad structure on $\Darc$
is given as follows. Let $\a,\a'$ be elements of $\Darc$, if the
total weight at the boundary $i$ of $\a$ is $\lambda$ and the
total weight at the boundary $0$ of $\a'$ is $\mu$, then first
scale the metric $w$ of $\a$ to $\mu w$ and likewise scale the
metric $w'$ of $\a'$ to $\lambda w'$ and afterwards glue as above.

\begin{nota}
Any collection of subspaces $\mathcal{S}$ of $\Arc$ defines a collection
of subspaces
$\mathcal{DS}:=\mathcal{S}\times \mathbb{R}_{>0}$ of $\Darc$.
\end{nota}

\begin{prop} For any suboperad $\mathcal S$ of $\Arc$ there
are  {\em isomorphisms} of operads
$\mathcal{DS}/\mathbb{R}_{>0}\simeq \mathcal{S}$ where
$\mathbb{R}_{>0}$ acts by scaling on the right factor
$\mathbb{R}_{>0}$ of $\Darc$. And these isomorphisms induce
equivalences of operads: ${\mathcal DS}\sim {\mathcal S}$.
\end{prop}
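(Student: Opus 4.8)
The plan is to realize the asserted isomorphism as the projection onto the first factor of $\mathcal{DS}=\mathcal S\times\mathbb{R}_{>0}$ and then check that this projection is multiplicative; the homotopy statement will follow at once from the contractibility of the $\mathbb{R}_{>0}$--factor. So, first, observe that the $\mathbb{R}_{>0}$--action in question is $t\cdot(\a,r)=(\a,tr)$, which is free with orbit map exactly the projection $\pi\colon\mathcal S\times\mathbb{R}_{>0}\to\mathcal S$. Hence arity by arity $\pi$ induces a homeomorphism $\mathcal{DS}/\mathbb{R}_{>0}\xrightarrow{\ \simeq\ }\mathcal S$, and this is compatible with the $\mathbb{S}_n$--actions since $\mathbb{S}_n$ and $\mathbb{R}_{>0}$ act on different factors. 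What remains is to see that the operad structure of $\mathcal{DS}$ descends along $\pi$ to the operad structure of $\Arc$ restricted to $\mathcal S$.

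For this, I would use the description of the $\Darc$--composition recalled above: to compose $\a=(\bg,w)$ with $\a'=(\bg',w')$ one rescales the honest metrics to $\mu w$ and $\lambda w'$, with $\lambda,\mu$ the total weights at the two glued boundaries, performs the $\Arc$--gluing of the associated partially measured foliations, and records the resulting honest metric on the glued surface. Tracking band widths through this recipe shows that scaling the metric of either input by some $s>0$ scales every intermediate metric --- hence the output metric --- by $s$, while leaving the underlying arc graph and the \emph{projective} class of the output metric unchanged. Since the $\mathbb{R}_{>0}$--coordinate is the total weight relative to a fixed normalized representative and so is homogeneous of degree one in the metric, this says precisely that $(s\cdot\a)\circ_i\a'=s\cdot(\a\circ_i\a')=\a\circ_i(s\cdot\a')$ in $\mathcal{DS}$. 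Therefore $\circ_i$ passes to $\mathbb{R}_{>0}$--orbits; and under $\pi$ the auxiliary rescalings by $\lambda$ and $\mu$ change projective classes only trivially, so the induced composition on the quotient is exactly the normalize--and--glue operation defining $\circ_i$ on $\Arc$. As the unit and the equivariance are clear, this gives $\mathcal{DS}/\mathbb{R}_{>0}\simeq\mathcal S$ as operads. Most of this is already in \cite{KLP} for $\Darc$ itself; the only addition is the remark that every step restricts to the suboperad $\mathcal S$, which is automatic because $\mathcal S$ is a suboperad of $\Arc$.

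Finally, for ${\mathcal DS}\sim{\mathcal S}$: the orbit map $q\colon\mathcal{DS}\to\mathcal{DS}/\mathbb{R}_{>0}\simeq\mathcal S$ is, by the previous paragraph, a morphism of operads, and on each arity it is the projection $\mathcal S(n)\times\mathbb{R}_{>0}\to\mathcal S(n)$, a homotopy equivalence (indeed a deformation retract) because $\mathbb{R}_{>0}$ is contractible. Hence $q$ is an equivalence of operads. The one step here that is not pure bookkeeping is the compatibility of the two gluings --- checking that discarding the overall scale really turns the $\Darc$--composition into the $\Arc$--composition --- and this is where I expect to have to be careful with the normalization conventions in the foliation picture; everything else, including the bundle-theoretic input, is routine and much of it can simply be quoted from \cite{KLP}.
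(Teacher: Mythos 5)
Your proof is correct. The paper in fact offers no argument at all for this proposition (it is stated and immediately followed by a \qed), so your writeup simply supplies, correctly, the routine verification the author deemed obvious: the orbit map of the free scaling action is the projection onto the $\mathcal S$--factor, the $\Darc$--composition is homogeneous of degree one in each input so it descends to projective classes where it agrees with the $\Arc$--gluing, and contractibility of $\mathbb{R}_{>0}$ upgrades the isomorphism on quotients to an equivalence ${\mathcal DS}\sim{\mathcal S}$.
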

\qed

\begin{thm}\cite{KLP,cact,del,hoch1}
$\GTree,\LGtree$ and $\Corol$ as well as their restrictions to
$g=s=0$ are suboperads (not cyclic) of the cyclic operad $\Arc$. The
same holds for their versions in $\Darc$ defined above. The spaces
 $\Arc_{\#,g}^0(n)$
form a rational suboperad and
$\mathcal{D}\Arc_{\#,g}^0(n)$ is a rational suboperad of $\Darc$.
(Here rational means that the compositions only need to be defined
on a dense open set.) Furthermore the following relations hold,
where the first line only holds on the level of rational operads.

\begin{tabular}{l|ll|ll}
Suboperad& \multicolumn{2}{l|}{isomorphic operad}&\multicolumn{2}{l}{equivalent operad}\\
\hline&&\\[-2mm]
 $\mathcal{D}\Arc_{\#,g}^0(n)$&$M^{1^{n+1}}_{g,n+1}$&\cite{hoch1}& \\
$\mathcal{D}\Tree$&$\Cacti$ &\cite{KLP}&$fD_2$ & \cite{cact}\\
$\mathcal{D}\Lintree$&$\Cact$
\settowidth{\ghost}{$i$}\makebox[1\ghost]{}
 &\cite{KLP}&\settowidth{\ghost}{$f$}\makebox[1\ghost]{}$D_2$ &\cite{cact}\\
 $\mathcal{D}\Corol_{cp}$&$\SCC$&\cite{cact}&\settowidth{\ghost}{$f$}\makebox[1\ghost]{}$A_{\infty}$&\cite{cact}\\
\end{tabular}

Additionally $\CGtree$ is also a suboperad.
\end{thm}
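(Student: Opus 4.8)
The plan is to verify that $\CGtree$ is closed under the operadic compositions $\circ_i$ inherited from $\Arc$; since it is by definition a collection of subspaces with the correct $\mathbb{S}_n$-actions, and since $\GTree$ is already known to be a suboperad by the preceding theorem, it suffices to show that if $\a\in\CGtree(m)$ and $\a'\in\CGtree(n)$ then $\a\circ_i\a'\in\CGtree(m+n-1)$. Because $\CGtree(n)\subset\GTree(n)$, we already know $\a\circ_i\a'\in\GTree(m+n-1)$, i.e.\ the glued surface still has $s=0$ and every arc runs from boundary $0$ to some boundary $j\neq 0$. So the only thing to check is the compatibility-of-orders condition: that the cyclic order of arcs at boundary $0$ of the composite is anti-compatible with the linear order at each non-zero boundary.

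First I would set up notation for the gluing in the window/foliation picture of \S\ref{gluingpar}: the arcs of $\a$ at its boundary $i$ and the arcs of $\a'$ at its boundary $0$ get replaced by bands, the windows are matched, and the bands are spliced and re-consolidated. I would track, for each non-zero boundary $j$ of the composite, where its incident arcs come from — either from a band of $\a$ at a boundary $j\neq i$, or from a band of $\a'$ at a boundary $j\neq 0$, possibly cut into several pieces — and how the cutting by separatrices reorders things. The key local fact is that the cyclic-versus-linear anti-compatibility is a \emph{local} condition at boundary $0$ of $\a$ and of $\a'$, and that the gluing procedure, when it splices the bands entering boundary $i$ of $\a$ with those entering boundary $0$ of $\a'$, reverses orientation exactly once (because one is reading a window from the ``inside'' and the other from the ``outside''): this single reversal is precisely what converts the cyclic order at $0$ of $\a$ composed with the linear order at $i$ of $\a$ — which, by the $\CGtree$ condition on $\a$, is anti-compatible — into the correct relation with the orders coming from $\a'$. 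I would then argue that an arc of $\a'$ that gets cut into several strands by separatrices produces consecutive strands whose induced orders (both at boundary $0$ of the composite and at their terminal boundary) are the ``same'' sub-order, so cutting preserves anti-compatibility; and that the consolidation of parallel leaves into single bands does not change cyclic or linear orders. Assembling these, the composite satisfies $e\prec_j e'\iff e'\prec_0 e$ at every non-zero boundary $j$, which is the defining condition of $\CGtree$.

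The main obstacle I expect is the bookkeeping around separatrices: when a band of $\a$ running to boundary $j$ is cut by the separatrices introduced at boundary $i$, its several images at boundary $0$ of the composite must be shown to appear in the cyclic order at $0$ exactly reversing their linear order at $j$, and this requires knowing that the separatrices of $\a'$ at its boundary $0$ sit in a position compatible (again after one orientation reversal) with the $\CGtree$ condition on $\a'$. This is the place where both the condition on $\a$ and the condition on $\a'$ are genuinely used, and where one must be careful that ``cut a band into pieces'' interacts correctly with ``anti-compatible order.'' I would handle it by first treating $\Corol$ (one arc per boundary, no cutting) where the claim is immediate from the single orientation reversal, then treating the general case by induction on the number of arcs, peeling off one arc at boundary $i$ at a time — or, more cleanly, by reducing to the $\Lintree$/$\Tree$ computations already recorded in the preceding theorem and noting that $\CGtree$ is the ``cyclic closure'' of $\LGtree$ boundary-by-boundary, so that closure under $\circ_i$ follows from closure of $\LGtree$ together with invariance of the construction under cyclically rotating the window at boundary $0$.
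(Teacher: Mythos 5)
Your proposal is correct and matches the paper's approach: the paper likewise reduces everything except the $\CGtree$ claim to the cited references and disposes of $\CGtree$ by exactly the direct check against the gluing procedure you outline (tracking how the cyclic order at boundary $0$ and the linear orders at the other boundaries behave under splicing and cutting of bands), offering as an alternative the reduction to $\LGtree$ via the bi--crossed product of Proposition \ref{biprop}, which is the same fallback you sketch in your last sentence. Your write-up is in fact more detailed than the paper's one-line proof.
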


\begin{proof}
The only statement not contained in the references is the one about
$\CGtree$. This  follows, however, in a  straightforward fashion
from the gluing procedure. Alternatively one can use Proposition
\ref{biprop} below.
\end{proof}

\begin{rmk}
Although the first line only deals with rational operads,
 it induces an isomorphism of true operads on the chain
level \cite{hoch1}. Here $M^{1^{n+1}}_{g,n+1}$ is the moduli space
of genus $g$ curves with $n$ marked points and a tangent vector at
each of these marked points. The operads in the second column are as
follows: $\Cacti$ is the operad of cacti introduced in \cite{vor},
$\Cact$ is the operad of spineless cacti \cite{cact} of and $\SCC$
is the suboperad of spineless cacti with only one vertex.
The operads in the third column are the familiar ones, that is
 $D_2$ is the $E_2$ operad of little discs, $A_{\infty}$ is the
$E_1$ operad of little intervals and $fD_2$ is the framed little discs operad.
The inclusion of $\GTree_{cp}\subset \Arcn$ thus gives an  $BV_{\infty}$ (BV up to homotopy) structure to a cell model of moduli which includes an
$A_{\infty}$ structure.

\end{rmk}

We will deal with $\Gtree$, $\LGtree$ and $\CGtree$ in the
following.

\subsection{Extended  gluing}
\label{extendedpar} The gluing procedure above was defined when gluing
together two boundaries which have the same width of the foliations.
The space $\Arc$ was chosen to guarantee that the boundaries are hit
and hence can be scaled to agree.
The  extension of the gluing we wish to make is to sometimes allow
gluing on a boundary with no incident arcs. In this case we glue the
surface and {\em delete} all the arcs incident to the boundary we
are gluing onto.

There will also be a gluing, where we will remember the deleted
arcs. This is described in detail in \S\ref{thickendpar}. In this gluing
we allow gaps in the foliation of a given width at the boundary $0$.

As an alternative to scaling the whole surface as in $\Darc$ we will
consider scaling only those arcs incident to the boundaries to be
glued. There are three types of scalings which provide glueable
foliations. Homogeneously scaling the arcs (1) at boundary $i$ or (2) at the
boundary $0$ of the other surface or (3) symmetrically scaling. We will
use the version (1) where we scale the arcs of the boundary $i$.

\section{The operad $\Sttree$}

\begin{assump} From here on out, we will assume that there are no punctures.
 Consequently we will set $s=0$ and {\em drop the superscript $0$}
 from the terminology of \cite{KLP}, e.g.\ we write $\Arc_g(n)$ for $\Arc_g^0(n)$.
\end{assump}

\subsection{Technical setup}

\subsubsection{Euler characteristic and quasi--filling arc graphs}
\begin{df}
We define the Euler characteristic of an element $\alpha \in
A_{g,r}^s$ to be $\chi(\alpha)=|\comp(\a)| - |E_{\Gamma(\alpha)}|$.
\end{df}

\begin{prop}\cite{hoch1}
\label{Euler}
 The following inequality holds
\begin{equation}
\chi(\alpha)\geq \chi(F(\alpha))
\end{equation}
and the equality holds if and only if the complementary regions
are polygons.
\end{prop}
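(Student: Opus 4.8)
The plan is to compute the Euler characteristic of the surface $F=F_{g,r}^s$ directly from the cell decomposition induced by the arc graph $\g(\a)$ together with its complementary regions, and compare it to $\chi(\a)=|\comp(\a)|-|E_{\Gamma(\a)}|$. First I would observe that an embedded arc graph $\Gamma\hookrightarrow F$, together with the boundary, gives $F$ the structure of a CW complex (after collapsing or not collapsing boundary arcs into the picture, as convenient): the $0$--cells are the marked points on the $r$ boundary components, the $1$--cells are the $|E_\Gamma|$ arcs together with the $r$ boundary circles (each boundary circle, with its one marked point, contributing one $0$--cell and one $1$--cell so that they cancel), and the $2$--cells need to be accounted for by the complementary regions. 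The subtlety is that a complementary region is in general not a disc; it is a surface with boundary and possibly with punctures, so I would handle each complementary region $R$ by $\chi(R)$ and use additivity of Euler characteristic along the $1$--skeleton.

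Concretely, I would write $\chi(F)=\chi(\text{$1$--skeleton}) + \sum_{R\in\comp(\a)}\bigl(\chi(R)-\chi(\partial R\cap \text{$1$--skeleton})\bigr)$, being careful about how many times each arc and each boundary edge is traversed by the closures of the regions. Using that each arc lies on the boundary of the regions with total multiplicity $2$ and that the contribution of the $r$ boundary circles cancels as noted, the $1$--skeleton bookkeeping collapses to $-|E_\Gamma|$ plus a correction that is absorbed into the regions, yielding $\chi(F)=\sum_{R}\chi(R) - |E_{\Gamma(\a)}|$. Now $\sum_R \chi(R)\le |\comp(\a)|$ because each connected complementary region $R$ is a surface with nonempty boundary, hence $\chi(R)\le 1$, with equality exactly when $R$ is a disc; and the punctured case only lowers $\chi(R)$ further (a once--punctured polygon has $\chi=0$). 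Combining, $\chi(F(\a)) = \sum_R\chi(R) - |E_{\Gamma(\a)}| \le |\comp(\a)| - |E_{\Gamma(\a)}| = \chi(\a)$, which is the desired inequality, and equality holds precisely when every complementary region is a (non--punctured) polygon, i.e.\ a disc.

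I would then double--check the equality clause against the already--stated fact (see the subspace $\Arc^s_{\#\,g}(n)$, whose complementary regions are polygons or once--punctured polygons) and against Proposition~\ref{Euler}'s phrasing, to make sure ``polygon'' here means an honest disc and that once--punctured polygons are excluded from the equality case; in the no--puncture setting $s=0$ assumed from Section~\ref{arcsection} onward this distinction disappears anyway. The main obstacle I anticipate is the careful $1$--skeleton accounting: one must make precise the CW structure when arcs may have both endpoints at the same marked point, when a region's boundary traverses the same arc twice, and when a region is not simply connected, so that the additivity formula $\chi(F)=\sum_R\chi(R)-|E_\Gamma|$ is justified rigorously rather than just on typical pictures. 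A clean way around this is to thicken $\Gamma$ to a ribbon--graph neighborhood $N(\Gamma)$, note $\chi(N(\Gamma))=|V_\Gamma|-|E_\Gamma| = r - |E_\Gamma|$ (since vertices biject with marked points, one per boundary), and then glue in the complementary regions along circles (which have $\chi=0$), so that $\chi(F)=\chi(N(\Gamma))+\sum_R\chi(R) - 0$; but this overcounts the $r$ trivial boundary contributions, which must be subtracted, giving exactly $\chi(F)=\sum_R\chi(R)-|E_\Gamma|$. With that identity in hand the inequality and its equality case are immediate from $\chi(R)\le 1$.
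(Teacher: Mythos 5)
The paper does not reprove this proposition but cites \cite{hoch1}, where the argument is exactly the Euler--characteristic count you give: one establishes $\chi(F)=\sum_{R}\chi(R)-|E_{\Gamma}|$ by reassembling $F$ from the completed complementary regions glued along the arcs, and then uses $\chi(R)\leq 1$ for a compact surface with non--empty boundary, with equality iff $R$ is a disc. Your proposal is therefore correct and follows the same route; of your two bookkeeping variants the direct gluing/CW computation is the one to keep (the thickened--neighborhood shortcut needs extra care about how $N(\Gamma)$ meets $\partial F$ and about the corners at the marked points), and your observation that once--punctured polygons are excluded from the equality case is the right reading of the statement.
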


The difference $\chi(\a)-\chi(F(\a))$ measures the defect of the surface.

\begin{df}

We set $\edefect(\a)=\sum_{R\in \comp(\g(\a))} (\chi(R)-1)$ and call
it the Euler defect.
If the Euler defect is  $0$  we call the elements quasi--filling.
Otherwise the element is called unstable.
\end{df}

\begin{ex} The elements
$\T_a$ have Euler defect $0$ and for the graphs in the Figure \ref{unstable}:
$\H_a$ has Euler defect $-1$ and $\G$ has Euler defect $-2$.
\end{ex}

\begin{lem}
\label{eulerlem}
The Euler defect defines an operadic filtration on $\Arc$ by
$\Arc^{(i)}$ where these are the elements of at most defect $-i$.
\end{lem}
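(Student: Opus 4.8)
The plan is to show two things: first that the Euler defect is monotone under face maps (deleting arcs) so that the sets $\Arc^{(i)}$ are actually closed subcomplexes and hence subspaces, and second that gluing cannot decrease the total defect, so that the filtration is respected by the operadic compositions $\circ_j$.

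First I would analyze what happens to $\edefect$ when an arc is deleted, i.e.\ under a codimension-one face map in the CW structure of $\Agrs$. Deleting an arc $e$ from a graph $\g$ either merges two distinct complementary regions $R_1,R_2$ into one region $R$, or it re-attaches a single region $R$ to itself along the arc. In the first case $\chi(R)=\chi(R_1)+\chi(R_2)-1$ (gluing two surfaces-with-boundary along a single arc, which is contractible), so the contribution $\sum(\chi(R)-1)$ changes by $(\chi(R_1)+\chi(R_2)-1-1)-((\chi(R_1)-1)+(\chi(R_2)-1))=0$; in the second case attaching a region to itself along an arc drops its Euler characteristic by $1$, so $\edefect$ decreases by $1$. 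Hence $\edefect$ is nonincreasing under arc deletion, which means $\{\a : \edefect(\a)\le -i\}$ is closed under passing to faces: $\Arc^{(i)}$ is a subcomplex, in particular a subspace, and $\Arc^{(0)}\subseteq\Arc^{(1)}\subseteq\cdots$ exhausts $\Arc$. (One should also note $\edefect\le 0$ always, by Proposition~\ref{Euler} applied region by region, or directly since each complementary region is a surface with at least one boundary circle and thus $\chi(R)\le 1$.)

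Second I would check compatibility with gluing. Let $\a\in\Arc(m)$, $\a'\in\Arc(n)$ and form $\a\circ_j\a'$. Using the foliation picture of \S\ref{gluingpar}, the complementary regions of $\a\circ_j\a'$ are obtained by gluing complementary regions of $\a$ to complementary regions of $\a'$ along the boundary being glued; the combinatorics is that of the dual "co-operad" cutting move. The key inequality is that if a region $R$ of $\a\circ_j\a'$ is assembled from regions $R_1,\dots,R_p$ of $\a$ and $R'_1,\dots,R'_q$ of $\a'$ glued along some arcs of a circle, then $\chi(R)-1 \le \sum_a(\chi(R_a)-1)+\sum_b(\chi(R'_b)-1)$, because each gluing along one contractible arc segment either leaves the Euler-defect sum unchanged (when it connects two previously distinct pieces) or decreases $\chi$ by one (when it creates a handle or identifies boundary of one piece). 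Closed loops appearing in the gluing only delete arcs, which by the first step can only decrease $\edefect$. Summing over all regions gives $\edefect(\a\circ_j\a')\le\edefect(\a)+\edefect(\a')$, so if both $\a\in\Arc^{(i)}$ and $\a'\in\Arc^{(i)}$ then $\a\circ_j\a'\in\Arc^{(i)}$; likewise if one has defect $\le -i$ and the other $\le 0$ the composite has defect $\le -i$, which is what is needed for $\Arc^{(i)}$ to be an operadic ideal-type filtration. The cyclic $\Sn$-actions obviously preserve $\edefect$ since they only relabel boundaries.

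The main obstacle I expect is the bookkeeping in the gluing step: one must track precisely how complementary regions of the two surfaces are cut by the separatrices and then reassembled into the regions of the composite, and verify the Euler-characteristic inequality in full generality (including the cases where a region of $\a$ meets the glued boundary in several disjoint arc-intervals, and where closed loops are produced). I would handle this by reducing to the deletion/merging lemma of the first step: realize the gluing as first forming a big surface where arcs may be parallel or closed, then consolidating parallel bands and deleting closed loops, each such consolidation or deletion being an instance of the arc-deletion analysis already carried out. With that reduction the inequality becomes a finite sum of the two elementary local cases, and the lemma follows.
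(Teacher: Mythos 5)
Your proof is correct and follows essentially the same route as the paper's: the key point in both is that $\chi-1$ is additive when complementary regions are glued along an arc joining two distinct regions and strictly decreases under a self-gluing, so the Euler defect can only drop under composition (the paper states this in two lines and defers the detailed combinatorics, including closed loops and consolidation of parallel bands, to \cite{hoch1}). Your additional verification that $\edefect$ is nonincreasing under arc deletion, so that the $\Arc^{(i)}$ are closed subcomplexes, is a sensible supplement but not a departure in method.
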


\begin{proof}
It is clear that the defect may only drop, since $\chi-1$ is
additive under gluing the complementary regions, if there is no
self--gluing; and if there is self--gluing then the Euler
characteristic decreases. For a careful analysis of all the
combinatorics that can occur see \cite{hoch1}.
\end{proof}

\subsubsection{Twisting at the boundary}
\begin{df}
We define the twisting at the boundary $i\neq 0$ of $\a$ by an angle
$a$ to be the composition $\a\circ_i \T_a$ and at the boundary $0$
we define the twist to be $T_{1-a}\circ_1\a$.
\end{df}

Notice $T_a\circ T_b=T_{a+b}$ here we calculate in $\R/\Z$. The
effect of a twist is to move the boundary point by the angle $a$
measured in units of $2\pi$.

\begin{df}
An arc graph is called {\em twisted at the boundary $i$} if  the first
and last edges at that boundary
 become homotopic, if one allows the endpoint on the boundary
$i$ to vary considering the marked point of the boundary $i$ as part
of the boundary.

An arc graph is called {\em untwisted} if it is not twisted at any
boundary. It is called {\em possibly twisted at $0$} if it is
untwisted at all boundaries $i$ with $i\neq0$.

An element of $\A$ is called twisted or untwisted at a boundary if
the underlying graph is. And likewise possibly twisted at $0$ if its
arc graph is.
\end{df}

\begin{lem}
An element $\a\in \Arc$ twisted at the boundary $i$ can be decomposed as
$\a'\circ_i\t$ if $i\neq 0$ or $\t\circ_1\a'$
for some $\t\in\Arc_0(1)$ and $\a'$ not twisted at the boundary $i$.
\end{lem}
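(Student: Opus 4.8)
The plan is to make the twisting explicit in terms of the $\T_a$ family and the extended/ordinary gluing. Recall that $\Arc_0(1)\supset\{\T_a : a\in S^1\}$ is the image of the fundamental cycle $\Delta$ of $\A_{0,2}^0=S^1$, and that twisting at a boundary is \emph{defined} (in the Definition preceding the Lemma) as $\a\mapsto\a\circ_i\T_a$ for $i\neq 0$ and $\a\mapsto \T_{1-a}\circ_1\a$ for $i=0$. So the statement to prove is that an arc graph which is ``twisted at $i$'' in the intrinsic combinatorial sense (first and last edges at $i$ homotopic when the marked point is allowed to slide) is in the image of one of these twisting operations applied to an untwisted-at-$i$ element. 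I would treat $i\neq 0$ as the main case and then remark that $i=0$ is formally identical after reversing the roles via the cyclic structure (or by the $\T_{1-a}\circ_1(-)$ convention).

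First I would set up local coordinates at the boundary $i$ of $\a$. Choose the window representation at boundary $i$: the incident flags are linearly ordered $e_1\prec_i e_2\prec_i\cdots\prec_i e_k$, and each arc leaves through a sub-interval of the window. The hypothesis ``twisted at $i$'' says precisely that the arc through the first flag $e_1$ and the arc through the last flag $e_k$ become homotopic once the marked point on boundary $i$ is treated as movable --- equivalently, there is a ``free'' sub-arc of the boundary containing the marked point that is bounded on both sides by the same arc's endpoints, so that moving the marked point past it changes nothing up to homotopy. Geometrically this means the marked point sits inside a complementary region $R$ that touches the boundary $i$ on two sub-intervals separated only by the rest of the window. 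The key observation is then: there is an angle $a\in S^1=\R/\Z$, determined by where this free boundary interval of $R$ sits relative to the marked point, such that performing the twist $\T_a\circ_1(-)$-style operation at $i$ (i.e.\ composing with $\T_a$ at boundary $i$) slides the marked point to one of the two ends of that free interval, producing an arc family $\a'$ whose window at $i$ no longer has the first and last flags homotopic --- i.e.\ $\a'$ is untwisted at $i$. Concretely: $a$ is chosen as the projective weight (normalized to total $1$ at boundary $i$) of that free gap, and one checks $\a'\circ_i\T_a=\a$ by the gluing-as-foliation description: gluing $\T_a$ (a single band wrapping the cylinder) onto boundary $i$ of $\a'$ has the sole effect of cyclically rotating the attaching data at that boundary by the angle $a$, i.e.\ of reintroducing the gap, hence of moving the marked point back. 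Since $\T_a\circ\T_b=\T_{a+b}$ in $\R/\Z$, the inverse twist is $\T_{-a}$, so explicitly $\a'=\a\circ_i\T_{-a}$, and the Lemma's claimed decomposition holds with $\t=\T_a$.

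The two things I would need to verify carefully are (a) that $\a'$ as just produced really is untwisted at the boundary $i$, not merely ``less twisted'', and (b) that $\a'$ still lies in $\Arc$ (exhaustive arc graph) rather than acquiring an empty boundary --- but (b) is automatic because we are not deleting arcs, only rotating the marked point, and exhaustiveness is a condition on arcs existing at each boundary, which is preserved; and for (a) the point is that being twisted at $i$ is a \emph{binary} condition (first $\equiv$ last, or not), not a graded one: after the single rotation by $a$ the marked point sits at the end of the unique maximal free gap, so the first and last flags at $i$ are now adjacent to genuinely distinct arcs (or there is only one arc, the trivial edge case, which one handles separately by noting $\a$ itself is then essentially a twist). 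I would also note that the decomposition is not unique (one may compose with further untwisted self-maps), which is fine since the Lemma only asserts existence. The main obstacle I anticipate is bookkeeping in the foliation/separatrix picture: one must be sure that composing with $\T_a$ at boundary $i$ does \emph{not} create closed loops or merge bands in a way that changes the arc graph beyond the rotation of the marked point --- this is where I would lean on the remark in \S\ref{gluingpar} that after gluing there are no parallel arcs and that the gluing of $\T_a$ (whose foliation is a single uniform band) simply recoordinatizes the window, together with a direct inspection of the (at most one) separatrix introduced.
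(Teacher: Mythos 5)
Your overall strategy is the same as the paper's: exhibit the untwisting as a composition with some $\T_a$ and then invert it using the group law $\T_a\circ\T_b=\T_{a+b}$, so that $\a=\a'\circ_i\T_{-a}$ with $\a'=\a\circ_i\T_a$ untwisted. The gap is in your identification of the angle $a$. You take $a$ to be the width of the ``free gap'' containing the marked point and you untwist by sliding the marked point \emph{to one of the two ends of that free interval}. But in the tight window representation used to define the gluing there is no space between the bands (and none between the bands and the endpoints of the window), so that gap has measure zero in the partial measure that calibrates the twist: your $a$ is $0$, $\T_0$ is the identity of the monoid $\Arc_0(1)$, and your $\a'$ equals $\a$ and is still twisted. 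Even read charitably in the spread-out depiction, moving the marked point within (or to the edge of) the complementary region adjacent to it is exactly the kind of homotopy that does not change the element: the first and last flags at $i$ are the same arcs as before and remain weakly homotopic. This is also why your step (a) fails as argued: ``sitting at the end of the maximal free gap'' is precisely the twisted configuration, not the untwisted one.

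What the twisted condition actually says is that the \emph{first and last bands} at boundary $i$ are parallel once the marked point is allowed to move, i.e.\ the marked point is the only thing separating two parallel bands. The correct untwisting moves the marked point \emph{through} one of these two bands, so $a$ is (plus or minus) the normalized width of the first (or last) band at boundary $i$. After this move the two parallel bands become adjacent and are consolidated into a single band --- so, contrary to your claim in (b), the number of arcs does drop by one (exhaustiveness is still preserved, since no boundary loses all its arcs). One then checks, as you intend in (a), that the new first and last arcs are no longer weakly homotopic, which holds because the old second arc was adjacent to the old first arc with no marked point between them and arc graphs contain no parallel arcs. With $a$ corrected in this way the rest of your argument --- the inversion via the group law, the non-uniqueness remark, and the treatment of $i=0$ by the convention $\T_{1-a}\circ_1(-)$ --- goes through and reproduces the paper's proof.
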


\begin{proof}
If a boundary is twisted, then it becomes untwisted by moving the
boundary point through one of the two parallel bands. This
corresponds to a composition with $\T_a$ for some $a$. Since
$T_{a-1}\circ_1 T_{a}=T_0$, if we assume that $i\neq 0$ we see that
$\a=\a'\circ_i T_{a-1}$ with $\a'=\a\circ_i T_a$ and analogously for
$i=0$.
\end{proof}

Let $\Gamma$ as usual denote the full operadic composition as opposed
to the pseudo operadic compositions $\circ_i$.

\begin{cor}
\label{twistedcor}
Any $\a \in \Arc(n)$ can be written as $\Gamma(\Gamma(\t_0,\a'),\t_1,\dots,\t_n)$
with $\t_i\in \Arc_0(1)$ and $\a'$ untwisted.
\end{cor}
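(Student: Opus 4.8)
The plan is to iterate the previous lemma over all $n+1$ boundary components. First I would apply the preceding lemma at each boundary $i\neq 0$ in turn: since $\a$ is twisted at $i$ only if the first and last edges there are homotopic, the lemma produces a decomposition $\a=\a_{(i)}\circ_i\t_i$ with $\t_i\in\Arc_0(1)$ and $\a_{(i)}$ untwisted at boundary $i$. The key observation is that a composition $\circ_j\t_j$ at a boundary $j\neq i$ does not alter the local picture of arcs at boundary $i$ (gluing in $\T_{a}\in\Arc_0(1)$ only re-parametrizes the marked point and re-collects bands at boundary $j$), so successively peeling off $\t_1,\dots,\t_n$ keeps all the previously-treated boundaries untwisted. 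After this, the resulting element is untwisted at every boundary $i\neq0$, i.e.\ possibly twisted at $0$.

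Next I would handle boundary $0$ using the $i=0$ case of the lemma: if the element obtained so far is twisted at $0$, write it as $\t_0\circ_1\a'$ with $\t_0\in\Arc_0(1)$ and $\a'$ untwisted at $0$. Since the $\circ_1$ composition on the left (gluing $\Arc_0(1)$ onto boundary $0$) does not introduce twisting at the other boundaries — again it only re-collects bands and moves the marked point at boundary $0$ — the element $\a'$ is untwisted everywhere. Re-assembling, $\a=\t_0\circ_1\bigl(\a'\circ_1\t_1\circ_2\cdots\circ_n\t_n\bigr)$, and rewriting pseudo-operadic compositions in terms of the full composition $\Gamma$ gives exactly $\a=\Gamma\bigl(\Gamma(\t_0,\a'),\t_1,\dots,\t_n\bigr)$, using associativity of the operad structure and the fact that $\t_0$ is inserted at boundary $0$ of $\a'$ (operadic slot $1$ in the $\Gamma(\t_0,-)$ convention).

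I expect the main obstacle to be the bookkeeping in the claim that peeling a twist off one boundary does not re-twist another, and in particular that applying the lemma at boundary $0$ last does not disturb the work at boundaries $1,\dots,n$. One must check that composing with an element of $\Arc_0(1)$ — which by definition has $s=g=0$ and arcs only between its two boundaries — is, at every boundary other than the one it is glued to, just a reshuffling of the cyclic/linear order of bands together with a shift of the marked point; since the notion of being "twisted at boundary $i$" (first and last edge homotopic after letting the endpoint at $i$ vary) is a local, PMC-invariant condition at boundary $i$, it is unaffected. A careful way to phrase this is to note that $\T_a$ acts invertibly (via $\T_a\circ_1\T_{-a}=\T_0$), so each peeling step is a genuine change of representative of the same arc family, and one is simply choosing a representative that is simultaneously untwisted at all boundaries. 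Once that locality point is granted, the rest is a routine induction and an application of operad associativity.
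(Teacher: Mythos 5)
Your proof is correct and follows the route the paper intends: the corollary is stated without proof as an immediate consequence of the preceding lemma, obtained exactly as you do by applying that lemma once at each boundary $i\neq 0$ and then at boundary $0$, and reassembling via operad associativity. Your explicit check that untwisting at one boundary is a local operation that cannot re-twist another is the only point the paper leaves implicit, and you handle it correctly.
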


\begin{rmk}
Note that this decomposition is not canonical in general.
\end{rmk}

\begin{lem}
\label{cuttinglem}
When cutting an element into two elements,
 we can always choose the
point $p$ on the cutting curve in such a manner that one of the new
boundaries is  untwisted.
\end{lem}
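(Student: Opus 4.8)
The plan is to analyze what the cutting construction produces at one of the two new boundaries and show that the freedom in choosing the marked point $p$ on the separating curve $c$ suffices to avoid the twisted configuration. Recall from \S\ref{extendedpar} that cutting along $c$ with chosen point $p$ makes the two images of $p$ the marked points on the two new boundary circles, and the arcs that hit the new boundary are exactly the bands of the foliation that cross $c$; their cyclic order around the new boundary circle is the order in which the corresponding leaves meet $c$. An arc graph is twisted at a boundary precisely when its first and last edges there become homotopic once the marked point is allowed to slide along that boundary; equivalently, in the foliation picture, when \emph{all} the bands meeting that new boundary can be slid across the marked point $p$ so as to sit on one side, i.e.\ when $p$ lies in a gap of the foliation abutting $c$ that is not separated from itself by the band pattern — in short, when $p$ is placed in a position where every band crossing $c$ lies on one side of $p$ along $c$.

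First I would handle the generic case: there is at least one band crossing $c$ and the bands do not all cross at a single point. Then along the curve $c$ the traces of the crossing bands occupy finitely many subintervals, and there is at least one complementary gap interval between two distinct band-traces (not the ``outside'' gap that wraps around). Choosing $p$ in such a genuine gap places the marked point \emph{between} two bands, so the first and last edges at the corresponding new boundary are distinct and non-homotopic even after sliding $p$ — that boundary is untwisted. Second, I would treat the degenerate cases. If no band crosses $c$, then after cutting there are no arcs incident to the new boundary on at least one side, hence that side is not twisted (the definition of twisted requires a first and last edge, so a boundary with no incident arcs is vacuously untwisted). If every crossing band meets $c$ in the same arc — so the band-traces form a single interval on $c$ — then putting $p$ inside that common interval makes the marked point sit \emph{inside} the bundle of parallel leaves, which again separates the first from the last edge at that boundary; I would check that in this position the new boundary obtained on that side is untwisted, using that parallelism of leaves is a PMC-invariant notion (as noted after the gluing description) so ``being inside the band'' is well defined. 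Alternatively, one can invoke Corollary \ref{twistedcor}: a twist at a boundary is realized by a factor $\t\in\Arc_0(1)$, and that factor is exactly the ambiguity in the position of $p$, so relocating $p$ removes it.

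The main obstacle is the bookkeeping in the degenerate case where all crossing bands pile up at one point of $c$, or where $c$ meets the arcs only tangentially in the foliation: one must be careful that ``choosing $p$ inside the band'' genuinely produces a representative that is untwisted at that boundary and not merely twisted in a way that is hidden by a change of PMC representative. I would resolve this by passing to the window representation of \S\ref{picturepar}, expanding $p$ to a small window, and observing directly that when $p$ is interior to the set of band-endpoints the first and last arcs at the new boundary end on opposite sides of the window and cannot be homotoped together while fixing the (now positive-length) window — which is precisely the negation of ``twisted at that boundary.'' This also makes clear that at least one of the two sides always admits such a choice, since the two new boundaries are cut from the same curve $c$ and the placement of $p$ that works for one side is forced by the geometry of the arcs crossing $c$ on that side, which we may analyze independently.
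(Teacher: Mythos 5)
Your overall strategy---place $p$ in a ``gap'' along $c$ so that the first and last arcs at the new boundary are genuinely distinct---is the right instinct, but the pivotal assertion in your generic case is unjustified and can actually fail. Two bands whose traces on $c$ are distinct intervals separated by a gap may nevertheless become \emph{parallel in the cut--off piece}, even though the corresponding arcs were non--homotopic in the original surface: cutting gives homotopies more room, and this is the same phenomenon that forces the gluing construction to ``collect together all parallel leaves into one band.'' If $p$ is placed in the gap between two bands that become parallel after cutting, those two bands are the first and last edges at the new boundary and they \emph{do} become homotopic once the endpoint is allowed to slide across $p$ --- that boundary is twisted. The paper's (very short) proof consists precisely of the step you are missing: first \emph{consolidate} all bands that become parallel after cutting on one of the two surfaces, and only then choose $p$ outside every consolidated band. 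Without that consolidation, the gap you pick has no reason to separate non--parallel arcs.

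Two further problems. Your working characterization of ``twisted'' (``every band crossing $c$ lies on one side of $p$'') is not the definition: twistedness only asks whether the \emph{first and last} edges become homotopic when the endpoint may cross the marked point, i.e.\ whether $p$ sits in the interior of a single parallelism class of leaves. As a result, your degenerate case is handled backwards: putting $p$ \emph{inside} a bundle of parallel leaves is exactly the twisted configuration (the two sub--bands on either side of $p$ become the first and last edges and are homotopic across $p$), whereas the correct move is to put $p$ \emph{outside} the consolidated band, leaving a single --- hence untwisted --- arc at that boundary. Finally, the appeal to Corollary \ref{twistedcor} does not rescue the argument: that decomposition extracts twists from a given element but says nothing about how to position $p$ on a prescribed cutting curve.
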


\begin{proof}
For this we first consolidate all bands
that become parallel after cutting on one of the two surfaces.
Now we choose the point $p$ not to lie inside any of these consolidated bands.
\end{proof}

\subsection{The structure of $\Gtree$}
In this section, we show that elements in $\Gtree$ have a standard
decomposition in terms of twists, an unstable element in $\Gtree(1)$
---which can be decomposed into canonical elements from
$\GTree_1(1)$--- and an untwisted quasi--filling element; here we used
Notation \ref{genusnota}.

\subsubsection{Twisting in $\Gtree$}
\begin{lem}
If $\a,\b\in \Gtree$  are both untwisted then for every possible
$i$: $\a\circ_i\b$ is untwisted.

If $\a,\b\in \Gtree$ are both possibly twisted at zero,
then for every possible $i$: $\a\circ_i\b$ is possibly twisted at
$0$.
\end{lem}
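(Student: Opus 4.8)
The plan is to list the boundary components of the glued surface $F(\a\circ_i\b)$ and to test the twisting condition at each of them. Recall that ``twisted at boundary $\ell$'' asserts that the two extreme arcs at $\ell$ become homotopic once the marked point of $\ell$ is allowed to slide around that boundary, and that this is witnessed by an embedded bigon one of whose sides sweeps all the way around boundary $\ell$ past its marked point. Since $\Gtree$ is a suboperad (recalled above), $\a\circ_i\b\in\Gtree$, so all its arcs run from boundary $0$. Its boundary components are: boundary $0$, which is boundary $0$ of $\a$; the boundaries $j\neq 0,i$ of $\a$; and the boundaries $k\neq 0$ of $\b$ --- the boundary $i$ of $\a$ and the boundary $0$ of $\b$, together with their marked points and the separating curve $c$ along which one glues, having disappeared as in \S\ref{gluingpar}. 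The topological input I would use throughout is that neither $F(\a)$ nor $F(\b)$ is a disc, so $c$ is an incompressible separating simple closed curve in $F(\a\circ_i\b)$; hence any bigon in $F(\a\circ_i\b)$ can be isotoped to meet $c$ in properly embedded arcs only, and therefore be cut along $c$ into a piece lying in $F(\a)$ and a piece lying in $F(\b)$.

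First I would treat the boundaries $\neq 0$ of $\a\circ_i\b$; this already settles the second assertion. At a boundary $j\neq 0,i$ of $\a$ the incident arcs run to boundary $0$ of $\a$, are disjoint from boundary $i$, and are thus unchanged by the gluing, marked point and cyclic order included; cutting along $c$ a bigon certifying a twist at $j$ leaves a bigon in $F(\a)$, so $\a\circ_i\b$ is twisted at this boundary only if $\a$ is twisted at boundary $j$. At a boundary $k\neq 0$ of $\b$ the germ of the arc picture is unchanged --- the arcs are merely prolonged across $c$ into $\a$ --- and again cutting a certifying bigon along $c$ produces a bigon in $F(\b)$ certifying a twist of $\b$ at boundary $k$. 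Thus, if $\a$ and $\b$ are untwisted then $\a\circ_i\b$ is untwisted at every boundary $\neq 0$; and since ``possibly twisted at $0$'' means exactly untwistedness at all boundaries $\neq 0$, while the two arguments just given used only the hypothesis on $\a$ and $\b$ at their boundaries $\neq 0$, this proves the second assertion and all of the first except for boundary $0$.

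For boundary $0$ of $\a\circ_i\b$ --- which is boundary $0$ of $\a$, with its marked point and cyclic order --- one must notice that a band of $\a$ at boundary $0$ can be cut by separatrices into several consecutive parallel sub--bands during the gluing, so that the first composite arc at boundary $0$ is the outermost sub--band of the first arc $E_1$ of $\a$ at boundary $0$, and symmetrically for the last arc $E_p$. If these two composite arcs became homotopic with the marked point of boundary $0$ mobile, I would cut the certifying bigon along $c$: its $F(\a)$--part is a bigon bounded by the outermost sub--band of $E_1$, the outermost sub--band of $E_p$, a sub--arc of $c$, and a side sweeping around all of boundary $0$ past its marked point; absorbing the parallelism between each sub--band and its parent band $E_1$, resp.\ $E_p$, turns this into a bigon witnessing that $\a$ is twisted at boundary $0$, contrary to hypothesis. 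This completes the first assertion.

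The only real work is in this last step: one must exclude that a twist is \emph{created} at boundary $0$ by the gluing, and for that one has to keep precise track of how the bands of $\a$ at boundary $i$ are subdivided, use the matching of windows in the gluing construction, and invoke minimal--position / innermost--disc arguments to control which sub--arc of $c$ occurs in the cut bigon. The other cases are routine once the incompressibility of $c$ is granted; alternatively one could phrase the whole argument in terms of the splitting of a twisted element as $\a'\circ_\ell\t$ from the lemma preceding Corollary \ref{twistedcor}.
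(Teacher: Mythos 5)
Your overall strategy --- cut a certifying bigon for the alleged homotopy along the separating curve $c$ and convict one of the two factors of being twisted --- is sound, and your observations that $c$ is essential (neither side is a disc) and that each composite arc crosses $c$ exactly once (because all arcs of $\Gtree$ run to boundary $0$) are exactly the right topological inputs. However, you have labelled as ``routine'' precisely the step that carries the content, and your statement of that step is not correct as written. At a boundary $k\neq 0$ of $\b$, cutting the certifying bigon along $c$ does \emph{not} produce ``a bigon in $F(\b)$ certifying a twist of $\b$ at boundary $k$'': it produces a square in $F(\b)$ one of whose sides is a sub--arc of $c$, along which the endpoints on the former boundary $0$ of $\b$ are allowed to slide. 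This square certifies a twist of $\b$ at $k$ only when that sub--arc avoids the (now disappeared) marked point of $c$. When the sub--arc contains that marked point, the $F(\b)$--square proves nothing about $\b$; one must instead pass to the complementary square in $F(\a)$, whose $c$--side contains the marked point of boundary $i$ of $\a$ while its side on boundary $0$ of $\a$ avoids the marked point there, and conclude that the two arcs of $\a$ being continued are parallel across the marked point of $i$, contradicting the untwistedness of $\a$ at the boundary $i\neq 0$. This dichotomy on the position of the vanished marked point is exactly what the paper's proof isolates (``\dots the marked point on that curve would have had to lie between them. Hence the two arcs in question have to be continued by parallel arcs, contrary to the assumption''), and it is why the hypothesis must be invoked at boundary $i$ of $\a$ and not only at boundary $k$ of $\b$.

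The same omission recurs in your boundary--$0$ analysis: the $F(\a)$--part of the bigon again has a side on $c$, and ``absorbing the parallelism'' yields a twist of $\a$ at $0$ only when that side avoids the marked point of $c$; in the remaining case you must instead derive a twist of $\b$ at its boundary $0$, which is excluded in the first assertion (the only one where boundary $0$ matters) but would not be excluded under the hypotheses of the second. Once this case analysis is added your argument closes up and becomes essentially the paper's proof rephrased in bigon language; note, though, that your division of labour is the opposite of the paper's, which declares the all--untwisted statement immediate and spends its effort on the boundaries $\neq 0$ in the situation where the glued boundary $0$ of $\b$ may be twisted.
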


\begin{proof}
The first statement is immediate. For the second statement, we have
to use the fact that all arcs run to $0$. Thus after gluing, if two
arcs would become parallel after allowing the endpoints to vary
across the marked point on a boundary different from $0$, then they
would have to be parallel starting at the boundary $0$ up to the
separating curve which was the glued boundary and furthermore the
marked point on that curve would have had to lie between them. Hence
the two arcs in question have to be continued by parallel arcs,
contrary to the assumption.
\end{proof}

\begin{cor}
The subspaces of  untwisted elements and those of possibly twisted
at $0$ are suboperads. The former will be given a superscript $'$
and the latter a superscript $0$.
\end{cor}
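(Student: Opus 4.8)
The plan is to verify directly the conditions that make a collection of subspaces $\mathcal S(n)\subseteq\Gtree(n)$ a suboperad: that it is closed under the partial compositions $\circ_i$, stable under the symmetric group actions on the inputs, and contains the operadic unit. That each of the two collections in question really consists of subspaces needs no separate argument, since whether an element of $\Arc$ is twisted at a given boundary depends only on its arc graph and not on the projective metric; so the set of untwisted elements, respectively of elements untwisted at every boundary $i\neq 0$, is a union of open cells $\dot C(\g)$ of $\Gtree$ with the subspace topology.

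The crucial point, closure under $\circ_i$, is already in hand: it is exactly the two assertions of the preceding Lemma, so for the Corollary I would simply invoke that Lemma (noting that it covers $\b\circ_j\a$ as well as $\a\circ_i\b$, since $\a$ and $\b$ play symmetric roles there). The two remaining checks are routine. For stability under $\Sigma_n$, a permutation of the labels $1,\dots,n$ merely relabels the boundaries $1,\dots,n$ of the surface; it permutes the set of boundaries at which the arc graph is twisted while fixing boundary $0$, hence sends untwisted arc graphs to untwisted ones and graphs untwisted at all $i\neq 0$ to graphs of the same type (and the $\circ_i$ are already $\Sigma$-equivariant, being those of $\Gtree$). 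For the unit, one recalls that the operadic unit of $\Arc$ is the corolla in $\Gtree(1)$, i.e.\ the single-arc graph on $F_{0,2}^0$ running from boundary $0$ to boundary $1$; with only one flag at each of its boundaries the defining condition for twistedness---that the first and last edges at a boundary become homotopic once the endpoint may cross the marked point---cannot be met, so the unit is untwisted and a fortiori possibly twisted at $0$, hence lies in both collections. One then fixes the notation $\Gtree'$ and $\Gtree^0$ as stated.

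I do not expect any real obstacle inside the proof of the Corollary: all of the substance sits in the preceding Lemma, whose nontrivial half (the ``possibly twisted at $0$'' case) crucially uses the defining property of $\Gtree$ that every arc runs to boundary $0$---this is what forces two arcs that would become parallel after sliding past a marked point on some boundary $i\neq 0$ to already be parallel from boundary $0$ onward, hence to be continued by parallel arcs, against hypothesis. If anything in the Corollary warrants a second look it is only the degenerate unit case (handled above by the vacuity of the twistedness condition for a single flag) and the remark that passing to a face, i.e.\ deleting an arc, is harmless because the two collections are cut out by conditions on the arc graph alone and not on the weights.
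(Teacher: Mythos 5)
Your proposal is correct and follows the paper's route exactly: the paper treats the Corollary as an immediate consequence of the preceding Lemma (closure under $\circ_i$), with the $\Sn$-equivariance and unit checks being routine as you note. Your added remarks on why the conditions cut out genuine subspaces and why the unit is untwisted are fine and consistent with the paper's conventions.
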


Particular examples are $\CGtree^0=\LGtree$; and the suboperads
$\LGtree'$ and $\GTree^0$ and $\GTree'$.

\begin{lem}
\label{twistedlem} Any element $\a\in\CGtree(n)$ can be canonically
written as $\a\in\Gamma(\a',\t_1,\dots,\t_n)$ with $\t_i\in
\Arc_0(1)$ and $\a'\in\LGtree(n)$.
\end{lem}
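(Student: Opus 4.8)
The plan is to exploit the structure already laid out: by definition, an element $\a\in\CGtree(n)$ is a graph all of whose arcs run from boundary $0$ to some boundary $i\neq 0$, and whose cyclic order at boundary $0$ is anti-compatible with the linear order at each other boundary. Since $\LGtree$ is the sub-case where the \emph{linear} (not just cyclic) order at $0$ is anti-compatible, the only defect of a general $\a\in\CGtree(n)$ relative to $\LGtree(n)$ is that ``cutting open'' the cyclic order at boundary $0$ at the marked point need not reproduce the correct linear order — equivalently, $\a$ may be twisted at the boundaries $i\neq 0$. So the first step is to observe $\CGtree^{0}=\LGtree$ (already recorded in the excerpt) and to reduce to understanding how the twisting at boundaries $i\neq 0$ can be peeled off.

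Concretely, I would proceed as follows. For each boundary $i\neq 0$, look at the arcs hitting $i$ in their linear order $\prec_i$; the cyclic order at $0$ being anti-compatible means that, after choosing \emph{some} cut point on boundary $0$, these arcs appear in the reverse linear order at $0$, but the actual marked point on $0$ may sit ``in the wrong place'' in this cyclic word. The remedy for boundary $i$ is a twist $\t_i\in\Arc_0(1)$: composing $\a\circ_i\t_i$ rotates the endpoints of the arcs at $i$ so that after the rotation the linear order at $i$ matches up with the cyclic-cut order at $0$. Applying Lemma~\ref{cuttinglem}/Corollary~\ref{twistedcor} locally at each boundary $i\neq 0$, one writes $\a=\Gamma(\a',\t_1,\dots,\t_n)$ with $\a'$ untwisted at every boundary $i\neq 0$; since $\a$ had no self-arcs and all arcs ran to $0$, the element $\a'$ still lies in $\GTree(n)$, is still possibly twisted at $0$ only, and its linear order at $0$ is now anti-compatible with the linear order at each $i$ — that is, $\a'\in\CGtree^{0}(n)=\LGtree(n)$. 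Canonicity of the decomposition comes from demanding that $\a'$ be untwisted at each $i\neq 0$: by the uniqueness half of the twist-decomposition argument in Corollary~\ref{twistedcor} (the ambiguity there is exactly a twist at a boundary, and we are fixing the boundaries $1,\dots,n$ to be untwisted), each $\t_i$ is determined, and hence so is $\a'$. Note there is \emph{no} $\t_0$ factor here, precisely because $\a'$ is still allowed to be twisted at $0$; this is what distinguishes the present lemma from Corollary~\ref{twistedcor}.

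The main obstacle I expect is the bookkeeping for why the peeled-off $\a'$ genuinely lands in $\LGtree(n)$ and not merely in $\GTree^{0}(n)$ — i.e.\ why untwisting at all $i\neq 0$ upgrades the anti-compatibility from cyclic to linear. The point to nail down is that, for $\a'\in\GTree(n)$, being untwisted at a boundary $i$ means the first and last arcs at $i$ are not homotopic across the marked point of $i$, and combined with the cyclic anti-compatibility at $0$ this forces the cut of the cyclic order at $0$'s marked point to respect each $\prec_i$ simultaneously; conversely a twist at some $i$ is exactly the obstruction. A secondary subtlety is simultaneity: peeling twists off at different boundaries $i$ commute with one another (the $\circ_i$ act on disjoint boundaries), so doing them one at a time and doing them all at once give the same answer, which is also where canonicity becomes transparent. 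Everything else — that $\t_i\in\Arc_0(1)$, that no genus or punctures are created (we are in the no-puncture setting by the standing assumption), and that $\Gamma(\a',\t_1,\dots,\t_n)$ recovers $\a$ — follows directly from the gluing description and Lemma~\ref{cuttinglem}.
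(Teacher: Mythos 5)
Your canonicity claim is the gap, and it is precisely where your route diverges from the paper's. You appeal to ``the uniqueness half of the twist-decomposition argument in Corollary~\ref{twistedcor}'', but that corollary has no uniqueness half: the paper remarks immediately after it that the decomposition there is \emph{not} canonical in general. Concretely, ``untwisted at boundary $i$'' only requires the marked point of boundary $i$ to sit at a corner between two consecutive non-parallel bands rather than in the interior of a band; there are as many such admissible corners as there are bands at boundary $i$, so imposing untwistedness at each $i\neq 0$ leaves a nontrivial ambiguity in each $\t_i$. Moreover, only one of those corners is the one for which the resulting linear order $\prec_i$ is anti-compatible with $\prec_0$, so untwistedness alone does not visibly place $\a'$ in $\LGtree(n)$ either --- this is exactly the step you flag as the ``main obstacle'' and then do not supply. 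Both load-bearing claims of your argument are therefore asserted rather than proved.

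The paper closes both gaps at once by a different mechanism: it exhibits $\CGtree(n)$ as a trivial $(S^1)^{\times n}$ bundle over $\LGtree(n)$ and writes down an explicit section. Because all arcs run to boundary $0$ and the cyclic orders are anti-compatible, traversing boundary $0$ from its marked point singles out, for each $i$, the \emph{first} band hitting boundary $i$; the first leaf of that band marks a canonical point on boundary $i$. Declaring these canonical points to be the marked points yields $\a'\in\LGtree(n)$ by construction, and $\t_i=T_{a_i}$ where $a_i$ is the transverse measure from the canonical point to the actual marked point --- a single well-defined number, which is what makes the decomposition canonical (and continuous in $\a$, which the bi-crossed product in Proposition~\ref{biprop} requires). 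This is also exactly where the cyclic-compatibility hypothesis of $\CGtree$ does its work; the remark following the lemma notes that the section degenerates on all of $\Gtree$, so any correct proof must use that hypothesis more substantially than yours does (you invoke it only to quote $\CGtree^0=\LGtree$, which is itself the unproved point). To repair your argument you would have to show directly that, for an element of $\CGtree$, exactly one corner at each boundary $i\neq 0$ restores linear anti-compatibility --- and that argument is, in effect, the paper's canonical-point construction.
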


\begin{proof}
Just like in \cite{cact} the main point is that the space
$\CGtree(n)$ is a trivial $(S^1)^{\times n}$ bundle over
$\CGtree^0(n)=\LGtree$. The fact that forgetting the marked points
on the boundaries different from zero is an $(S^1)^{\times n}$
bundle is clear. The section is given as follows: all the bands hit
$0$ and the cyclic orders are compatible. This means that going
around the boundary zero for each boundary there is a first band
that appears. The first leaf of this band defines a canonical point
on the $i$--th boundary. Now the marked point on this boundary is
then determined by the distance (using the partial measure on the
foliation) from this point. Since this map depends continuously on
the marked point at $0$ and the other marked points, it  gives
global co--ordinates and a global trivialization. In particular, the
element $\a'$ above is the element where the canonical points are
marked and the $\t_i$ are the elements $T_{a_i}$ which twist by the
distance.
\end{proof}

\begin{rmk}

In general on $\Gtree$ the section constructed above is actually only piecewise
linear and may become discontinuous as soon as the genus is bigger or equal to
one.  The compatibility of the cyclic orders was key above. If a
braiding occurs, the result ceases to be true.
\end{rmk}

This allows us to decode the structure of $\CGtree$ as the
generalization of $\Cacti$.
Recall (see e.g.~\cite{cact})
that for any monoid $M$ there is an operad $\mathcal M$ given
by taking ${\mathcal M}(n):=M^{\times n}$ with the permutation action
and the compositions given by using the diagonal embedding
and the multiplication of the monoid.

\begin{prop}\cite{cact}
 $\Tree$ is the suboperad of $\Arc$ generated by $\Lintree$
and $\Arc_0(1)$. Moreover it is a bi--crossed product of  $\Lintree$
with the operad built on the monoid $\Arc_0(1)\simeq S^1$.
\end{prop}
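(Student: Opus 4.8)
The plan is to derive both assertions from Lemma~\ref{twistedlem} applied with $g=s=0$, in which case $\CGtree_{cp}=\Tree$ and $\LGtree_{cp}=\Lintree$. For the first assertion, one first observes that $\Lintree\subset\Tree$ by definition and that $\Arc_0(1)\subset\Tree(1)$: an element of $\Arc_0(1)=\A_{0,2}^0$ (Example~\ref{arcex}) is a weighted arc graph of genus $0$ with no punctures all of whose arcs run from boundary $0$ to boundary $1$, and the cyclic anti--compatibility condition defining $\CGtree$ is vacuous when there is only one boundary $i\neq 0$. Since $\Tree$ is a suboperad of $\Arc$, the suboperad generated by $\Lintree$ and $\Arc_0(1)$ is contained in $\Tree$; conversely Lemma~\ref{twistedlem} writes every $\a\in\Tree(n)$ as $\Gamma(\a',\t_1,\dots,\t_n)$ with $\a'\in\Lintree(n)$ and $\t_i\in\Arc_0(1)$, which gives the reverse inclusion.

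For the bi--crossed product statement, the canonical decomposition of Lemma~\ref{twistedlem} provides for each $n$ a bijection
\[
\phi_n\colon\ \Tree(n)\ \xrightarrow{\ \sim\ }\ \Lintree(n)\times\M(n),\qquad \a\longmapsto\bigl(\a',(\t_1,\dots,\t_n)\bigr),
\]
where $\M$ is the operad built on the monoid $\Arc_0(1)\simeq S^1$, so that $\M(n)=(S^1)^n$ with monoid multiplication $T_a\circ T_b=T_{a+b}$ in $\R/\Z$. I would check that $\phi_n$ is a homeomorphism --- continuity of the $S^1$--coordinates in the marked points is precisely what is established in the proof of Lemma~\ref{twistedlem}, and the inverse is an operadic composition, hence continuous --- and that it is $\Sn$--equivariant, since $\Sn$ permutes the boundaries $1,\dots,n$ and the canonical point on boundary $i$ of $\a'$ is intrinsic. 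It then remains to transport the operad structure of $\Tree$ across $\phi=\{\phi_n\}$ and to recognize the result as the bi--crossed product $\Lintree\bowtie\M$ in the sense of \cite{cact}. Associativity, unitality and equivariance of the transported structure are automatic, since $\Tree$ is already a genuine operad; the only real content is the explicit form of the composition in the new coordinates.

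To extract that form I would compute $\a\circ_i\b$ for $\a=\Gamma(\a',\t_1,\dots,\t_n)$ and $\b=\Gamma(\b',\s_1,\dots,\s_m)$ using the band/separatrix picture of the gluing, or equivalently by iterated use of operad associativity, which gives $\a\circ_i\b=\a'\circ_i\Gamma(\t_i\circ_1\b',\s_1,\dots,\s_m)$. Re--imposing the normalization of Lemma~\ref{twistedlem} on the right--hand side then splits the datum $\t_i\circ_1\b'$ (a twist at boundary $0$ of the inserted surface) into two contributions: (a) a rotation of the inserted spineless configuration, i.e.\ a left action of the monoid $S^1$ on $\Lintree$ by moving the marked point on boundary $0$, producing the $\Lintree$--part of $\phi(\a\circ_i\b)$; and (b) new twists at the remaining boundaries, in which the $\t_j$ with $j\neq i$ survive unchanged while each $\s_k$ is shifted by an amount read off from the shapes of $\a'$ and $\b'$ --- this dependence being a right action of $\Lintree$ on $\M$. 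One then verifies that these two actions form a matched pair, so that the transported operad is literally $\Lintree\bowtie\M$; this last compatibility is once more forced by associativity of $\Tree$. (Alternatively, one may quote the isomorphisms $\mathcal{D}\Tree\cong\Cacti$ and $\mathcal{D}\Lintree\cong\Cact$ recalled above and deprojectivize the bi--crossed product decomposition of $\Cacti$ from \cite{cact}.)

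The hard part is the extraction of the two actions in (a) and (b): one must track carefully, through the foliation gluing, how the marked point on the inner boundary $0$ is redistributed as a global rotation of the glued--in configuration together with angular shifts of the outer spines. This is exactly the bookkeeping carried out for ordinary cacti in \cite{cact}. What makes it go through for $\Tree$ --- and, as recorded in the Remark after Lemma~\ref{twistedlem}, fail once the genus is positive --- is the anti--compatibility of the cyclic orders, which is precisely what guarantees the existence of the continuous canonical section underlying $\phi$.
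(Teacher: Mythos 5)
Your proposal is correct and follows essentially the route the paper itself indicates: the statement is quoted from \cite{cact}, and the paper's proof of the generalization (Proposition \ref{biprop} for $\CGtree$) is exactly your argument — generation via the canonical decomposition of Corollary \ref{twistedcor}/Lemma \ref{twistedlem}, and the bi--crossed product structure via the trivial $(S^1)^{\times n}$--bundle $\Tree(n)\to\Lintree(n)$ with the composition law transported through it as in \cite{cact}. Your identification of the left rotation action and the right twist--redistribution action, with the matched--pair conditions forced by associativity, is precisely the bookkeeping carried out in \cite{cact}.
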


\begin{prop}
\label{biprop}
$\CGtree$ is the suboperad of $\Arc$ generated by $\Arc_0(1)$ and $\LGtree$.
 Moreover it is a bi--crossed product
of  $\LGtree$ with the operad built on
the monoid $\Arc_0(1)\simeq S^1$.
\end{prop}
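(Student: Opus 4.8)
The plan is to run the proof of the corresponding statement for $\Tree$ from \cite{cact}, substituting Lemma~\ref{twistedlem} for the trivialization lemma used there. The point that makes this possible is that the one genuinely genus--dependent phenomenon---a braiding of the arcs leaving the boundary $0$---is excluded by the defining compatibility of cyclic and linear orders in $\CGtree$, so every step is literally the same as in genus zero (compare the Remark after Lemma~\ref{twistedlem}).

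First I would dispatch the generation statement. Both generators sit inside $\CGtree$: by definition $\LGtree=\CGtree^0\subset\CGtree$, and $\Arc_0(1)\subset\Tree=\CGtree_{cp}\subset\CGtree$ because $\Tree$ is, by the analogous Proposition for $\Tree$, the suboperad of $\Arc$ generated by $\Lintree$ and $\Arc_0(1)$ (so in particular $\Arc_0(1)$ lies in it). Since $\CGtree$ is a suboperad of $\Arc$, the suboperad generated by $\LGtree\cup\Arc_0(1)$ is contained in $\CGtree$. For the reverse inclusion I would invoke Lemma~\ref{twistedlem}: every $\a\in\CGtree(n)$ equals $\Gamma(\a';\t_1,\dots,\t_n)$ with $\a'\in\LGtree(n)$ and $\t_i\in\Arc_0(1)$, hence lies in the suboperad generated by the two pieces. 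This gives the asserted equality.

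For the bi--crossed product structure I would begin by noting that Lemma~\ref{twistedlem} turns the canonical decomposition into a homeomorphism $\CGtree(n)\simeq\LGtree(n)\times\Arc_0(1)^{\times n}$, and that it is $\Sn$--equivariant for the product of the action on $\LGtree(n)$ with the permutation of factors---clear because the ``first band at $0$'' rule that defines the canonical points is natural under renumbering the boundaries $i\neq 0$. Under $\Arc_0(1)\simeq S^1$ (Example~\ref{arcex}) this is precisely the underlying collection of spaces of the bi--crossed product of $\LGtree$ with the operad $\M$ built on the monoid $S^1$. The remaining work is to match the $\circ_i$. Writing $\a=\Gamma(\a';\t_1,\dots,\t_m)$, $\b=\Gamma(\b';\s_1,\dots,\s_n)$, operad associativity gives
\[
\a\circ_i\b=\Gamma\bigl(\a';\,\t_1,\dots,\t_{i-1},\;\Gamma(\t_i\circ_1\b';\,\s_1,\dots,\s_n),\;\t_{i+1},\dots,\t_m\bigr),
\]
in which $\t_i\circ_1\b'$ is the twist of $\b'$ at its boundary $0$ (up to the convention $T_{1-a}$). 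Since $\a\circ_i\b$ again lies in $\CGtree(m+n-1)$, Lemma~\ref{twistedlem} gives it a canonical form $\Gamma\bigl((\a\circ_i\b)';\,\u_1,\dots,\u_{m+n-1}\bigr)$, and I would compute the pieces by pushing $\t_i$ through the gluing with $T_a\circ_1 T_b=T_{a+b}$ and tracking how the canonical points move: $(\a\circ_i\b)'$ comes out as $\a'\circ_i\b'$ reparametrized by the shift of canonical points produced by the $\s_j$, while $(\u_1,\dots,\u_{m+n-1})$ is assembled from $(\t_1,\dots,\widehat{\t_i},\dots,\t_m)$, $(\s_1,\dots,\s_n)$ and $\t_i$ by the diagonal--insertion--and--multiplication of $\M$, corrected by an $\a'$--dependent reparametrization of the $\s_j$. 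These are exactly the two ``distributivity'' data defining a bi--crossed product in the sense of \cite{cact}, and the operad axioms for the combined structure then follow formally as there.

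I expect the main obstacle to be the bookkeeping in that last step: making the $\a'$--dependent shift of the canonical points completely explicit and checking that it is continuous and satisfies the compatibility (cocycle--type) identities a bi--crossed product requires. However, this shift depends only on the combinatorics of the cyclic order at $0$ relative to the linear orders at the remaining boundaries, and never on the genus once braiding is ruled out, so the verification is word--for--word the genus--zero computation already carried out for $\Tree$ in \cite{cact}. As a fallback one can avoid the explicit formulas: having established that $\CGtree$ is generated by $\LGtree$ and $\Arc_0(1)$ and that its spaces are the products above, it is enough to read off the two distributivity maps from low--arity compositions and check the bi--crossed axioms, which again reduces to loc.\ cit.
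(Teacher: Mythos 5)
Your proposal is correct and follows essentially the same route as the paper: the generation statement is obtained from the canonical decomposition of twists (the paper cites Corollary~\ref{twistedcor}, you use the equivalent canonical form of Lemma~\ref{twistedlem}), and the bi--crossed product structure is read off from the trivialization $\CGtree(n)\simeq\LGtree(n)\times(S^1)^{\times n}$ of Lemma~\ref{twistedlem}, with the axiom verification deferred to the genus--zero argument for $\Tree$ in \cite{cact} exactly as the paper does. Your observation that the compatibility of cyclic and linear orders rules out braiding is precisely the point the paper makes in the remark following Lemma~\ref{twistedlem}.
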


For the definition of bi--crossed products see \cite{cact}.
\begin{proof}
The first part follows from Corollary \ref{twistedcor}. In view of Lemma \ref{twistedlem} and
its proof, the proof of
the bi--crossed product part for $\Gtree$
is  analogous to the argument given in \cite{cact}
for $\Tree$.
\end{proof}

\subsubsection{Classifying elements in $\Gtree$}

\begin{lem}
\label{decomplemma1} Any unstable element $\a\in \Gtree(n)$ can be written as
$\a_1\circ_1\a'$ with $\a_1\in \Gtree(1)$ unstable  and $\a'$
quasi--filling.
\end{lem}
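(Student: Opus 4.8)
The plan is to peel off the instability one complementary region at a time, realizing each ``excess'' of Euler characteristic as a composition with an unstable element of $\Gtree(1)$ supported at the outgoing boundary. Recall from the definitions that $\edefect(\a)=\sum_{R}(\chi(R)-1)\le 0$, and $\a$ is unstable precisely when some complementary region $R_0$ has $\chi(R_0)\le 0$, i.e.\ is not a polygon (here, since $s=0$, every region is an unpunctured surface with boundary, so $\chi(R)\le 1$ always, and $\chi(R)=1$ exactly for polygons). Since $\a\in\Gtree(n)$, all arcs run from boundary $0$ to the other boundaries, and every complementary region therefore touches boundary $0$. The idea is to use Lemma \ref{cuttinglem}: I want to find a separating curve $c$ in $F(\a)$ cutting off a subsurface $F_1$ which, together with the rest, recovers $\a$ as $\a_1\circ_1\a'$ with $\a_1\in\Gtree(1)$.

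First I would make the geometric construction explicit. Pick a complementary region $R_0$ that is not a polygon. Inside $R_0$ there is a subsurface carrying all of its genus and extra boundary, and we can choose a separating simple closed curve $c$ in $F(\a)$ that bounds, on one side, a subsurface $F_1$ containing boundary $0$ together with enough of the topology of the regions adjacent to $0$ so that the other side $F_2$ has all its complementary-region defects removed; concretely one drags $c$ so that on the $F_2$ side every complementary region is a polygon. The key point is a counting/topology argument: because $\chi$ of the complementary regions is additive under gluing along $c$ (away from self-gluing), the defect of $\a$ splits as the defect contributed ``near $0$'' plus the (zero) defect of $\a'$. Then I would invoke Lemma \ref{cuttinglem} to choose the marked point $p$ on $c$ so that the new boundary on the $F_1$ side is untwisted, making $\a_1$ a well-defined arc family; and I would check that $\a_1\in\Gtree(1)$, i.e.\ $F_1$ has exactly two boundary components (boundary $0$ and the new one) and all its arcs still run between them — this holds because all arcs of $\a$ ran to $0$ and cutting along a curve lying in a complementary region does not create arcs avoiding $0$.

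Next I would verify that $\a'$ is quasi--filling: by Proposition \ref{Euler} this is equivalent to all complementary regions of $\a'$ being polygons, which was arranged by the choice of $c$. One has to be careful that cutting along $c$ does not split an arc of $\a$ in a way that changes which regions are polygons — but since $c$ can be taken disjoint from all arcs (it lies inside a single complementary region $R_0$), no arc is cut at all; the regions of $\a'$ are exactly the regions of $\a$ other than $R_0$, together with whatever polygonal pieces $R_0$ breaks into on the $F_2$ side, and all of these are polygons by construction. Similarly $\a_1$ is unstable since it inherits the non-polygonal region (the $F_1$ side of $R_0$, or rather the region carrying the removed defect). Finally, by Lemma \ref{eulerlem} the defect is additive in this situation, so $\edefect(\a)=\edefect(\a_1)$ and $\a_1$ genuinely carries all the instability, while $\a=\a_1\circ_1\a'$ by construction since gluing back along $c$ with marked point $p$ is the inverse of the cut.

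The main obstacle I expect is the topological bookkeeping in choosing $c$: one must show that a single cut suffices to push all complementary-region defects to the $F_1$ side, which requires knowing that the non-polygonal regions can be simultaneously ``reached'' from boundary $0$ — this is exactly where the hypothesis $\a\in\Gtree$ (all arcs run to $0$, so every complementary region is incident to boundary $0$) is essential. A secondary subtlety is ensuring $F_1$ has only two boundary components so that $\a_1\in\Gtree(1)$ rather than landing in some $\Gtree(k)$ with $k>1$; if the naive curve cuts off more boundaries one would need to argue one can isotope or choose $c$ to enclose all the relevant boundary components of $\a$ together with $0$. I would handle this by taking $c$ to be (a boundary-parallel pushoff of) a curve that separates boundary $0$ from all the boundaries $1,\dots,n$, chosen inside the union of complementary regions adjacent to $0$, and then absorbing into $F_1$ exactly the topology needed to make $F_2$ planar with polygonal regions.
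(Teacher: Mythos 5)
Your overall strategy --- push the defects toward boundary $0$ and cut with a separating curve --- is the same as the paper's (``sliding down'' the defects), but your verification rests on a claim that is false. You assert that the separating curve $c$ ``can be taken disjoint from all arcs (it lies inside a single complementary region $R_0$), [so] no arc is cut at all.'' This is impossible for the curve the decomposition requires: to write $\a=\a_1\circ_1\a'$ with $\a_1\in\Gtree(1)$ containing the original boundary $0$, the curve $c$ must separate boundary $0$ from boundaries $1,\dots,n$; since every arc of $\a\in\Gtree(n)$ runs from boundary $0$ to some boundary $i\neq 0$, every arc crosses $c$ at least once. Conversely, a curve disjoint from all arcs and contained in one complementary region cannot separate boundary $0$ from the other boundaries, and cutting along it does not yield an operadic factorization at all (one side would carry no labeled boundary, or a non-exhaustive family). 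In the $\Arc$ formalism the cutting curve necessarily crosses the bands of the foliation, and each arc of $\a$ is split into an arc of $\a_1$ and an arc of $\a'$.

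As a result, your argument that $\a'$ is quasi--filling (``the regions of $\a'$ are exactly the regions of $\a$ other than $R_0$, together with polygonal pieces of $R_0$'') does not stand: \emph{every} complementary region of $\a$ is cut into pieces by $c$, and what must be shown is that $c$ can be chosen so that for each region $R$ the pieces on the $\a'$ side are polygons, with all of the genus and extra boundary components of $R$ landing on the $\a_1$ side. This is exactly the content of the paper's ``sliding down'': since each complementary region touches boundary $0$ (your correct observation, using that all arcs end on $0$), its handles and extra boundary circles can be isotoped into a collar neighborhood of boundary $0$ and cut off there by a single curve crossing all the bands. With that repair the rest of your checks (choice of marked point via Lemma \ref{cuttinglem}, $F_1$ having exactly two boundary components because $c$ is one separating curve isolating boundary $0$, and additivity of $\chi-1$ across the cut) go through and recover the paper's argument.
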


\begin{proof}
Since all the complementary regions border the boundary $0$ we can
decompose $\a$ as $\a_1\circ_1 \a'$ with $\a'$ quasi--filling and $\a_1$
unstable by ``sliding down'' the defects and cutting with a
separating curve.
\end{proof}

An example of this procedure is given in Figure \ref{unstableex}.

\begin{figure}
\epsfxsize = .8\textwidth \epsfbox{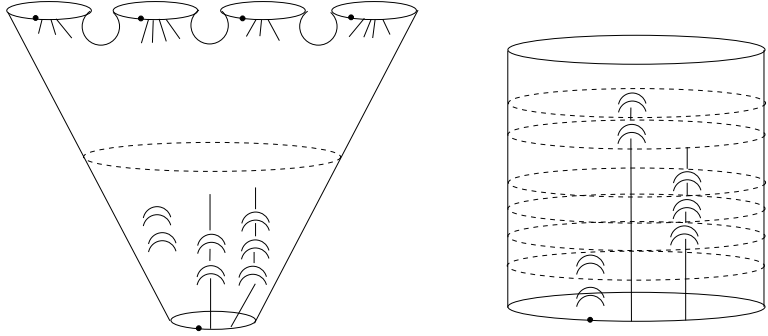}
\caption{\label{unstableex} Decomposing an unstable arc graph into
standard form. First ``slide down'' the defects, viz.\ decompose
$\a$ into $\a'$, the top part, and $\a_1$ the bottom part. Second
further decompose $\a_1$ by cutting so that there is one defect $H$
or $G$ per ``ring''.}
\end{figure}

\begin{lem}
\label{finerlem}
In the above decomposition, we can furthermore decompose $\a'$ as
$\a''\circ \a_0$ where $\a''\in \Gtree(1)$ is quasi--filling
and $\a_0$ is in $\Gtree_0(n)$ with either $\a_0$ not twisted at $0$
or $\a''$ not twisted at $1$.
\end{lem}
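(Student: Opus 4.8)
The plan is to start from the decomposition $\a = \a_1 \circ_1 \a'$ of Lemma~\ref{decomplemma1}, so that $\a'$ is quasi--filling and all the combinatorial defects have been pushed into $\a_1$. The element $\a'$ is a quasi--filling element of $\Gtree(n)$, and I want to split off its ``twisting and arity'' part from a genus--carrying part that lives in $\Gtree(1)$. Concretely, I would perform a cut on $\a'$ along a separating curve that cuts off all the handles, leaving on the ``inside'' a surface of genus $0$ (hence an element of $\Gtree_0(n)$, i.e.\ in the cacti--type part where Lemma~\ref{twistedlem} and Proposition~\ref{biprop} apply), and on the ``outside'' a one--input piece $\a'' \in \Gtree(1)$ carrying all the genus. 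Since all arcs in a $\Gtree$ element run from boundary $0$ to the other boundaries, and since $\a'$ is quasi--filling so its complementary regions are controlled, such a separating curve can be chosen: it runs through boundary $0$ and cuts the surface into a genus--$0$ piece containing boundaries $1,\dots,n$ and a genus--$g$ piece with one input. This gives $\a' = \a'' \circ \a_0$ with $\a'' \in \Gtree(1)$ quasi--filling and $\a_0 \in \Gtree_0(n)$.

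The remaining point is the twisting condition: I want either $\a_0$ untwisted at $0$ or $\a''$ untwisted at $1$. For this I would invoke Lemma~\ref{cuttinglem}: when cutting along the separating curve, the marked point $p$ on the curve can be chosen so that one of the two resulting new boundaries is untwisted. If we place $p$ so that the boundary $0$ of $\a_0$ is untwisted, we are done with the first alternative; if instead we place it so that the boundary $1$ of $\a''$ is untwisted, we get the second. Both choices are available precisely because of the freedom in Lemma~\ref{cuttinglem}, and the cut is compatible with the $\Gtree$ condition since the cutting curve meets boundary $0$ and all arcs emanate from there. One must check that after consolidating parallel bands (as in the proof of Lemma~\ref{cuttinglem}) the point $p$ can still be chosen on boundary $0$ between two consolidated bands so as to realize the desired untwistedness; this is where a small amount of care is needed.

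The main obstacle I expect is verifying that the separating curve can genuinely be chosen to isolate \emph{all} the genus into a single one--input piece $\a''$ while keeping $\a_0$ in $\Gtree_0(n)$ --- that is, that the genus can be ``collected'' on one side of a single cut. This uses the quasi--filling hypothesis on $\a'$ crucially: the complementary regions are (punctured) polygons up to Euler defect zero, so the arc graph fills the surface tightly enough that a handle--separating curve exists meeting boundary $0$. Once the geometry of this cut is pinned down, the twisting bookkeeping via Lemma~\ref{cuttinglem} is routine. I would also remark, paralleling the earlier remarks in the section, that this decomposition need not be canonical, since the choice of separating curve and of $p$ involves choices.
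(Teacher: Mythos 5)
Your proposal is correct and follows essentially the same route as the paper: cut $\a'$ along a separating curve that isolates the genus in a one--input piece containing boundary $0$, and invoke Lemma~\ref{cuttinglem} to place the marked point so that one of the two new boundaries is untwisted. The only point the paper makes explicit that you gloss over is that the quasi--filling property of the two pieces is deduced from the Euler--defect filtration of Lemma~\ref{eulerlem} (the defect is additive and can only drop under gluing, so a defect--zero composite forces defect zero on both factors), rather than read off directly from the geometry of the cut.
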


\begin{proof}
As above we can choose a cutting curve which separates the surfaces
as stipulated. The additional condition about being untwisted
follows from Lemma
 \ref{cuttinglem}
while the fact that
both $\a_0$ and $\a''$
 have to be quasi--filling follows from Lemma \ref{eulerlem}.
\end{proof}

\begin{lem}
\label{structurelemma} If $\a$ is an untwisted unstable element of
$\Gtree_1(1)$ then up to twists $\a$ is either of the form $\G$ or
$\H_a$ as depicted in Figure \ref{unstable}.
\end{lem}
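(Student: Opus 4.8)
The plan is to analyze directly what an untwisted unstable element $\a \in \GTree_1(1)$ can look like, using the constraints that (i) the surface has genus $1$, one input and one output boundary, no punctures; (ii) all arcs run from boundary $0$ to boundary $1$; (iii) $\a$ is untwisted at both boundaries; and (iv) $\a$ is unstable, i.e.\ the Euler defect $\edefect(\a) = \sum_R (\chi(R)-1)$ is strictly negative. First I would compute the Euler-characteristic bookkeeping: for $F = F_{1,2}^0$ we have $\chi(F) = 2 - 2\cdot 1 - 2 = -2$, so by Proposition \ref{Euler}, $\chi(\a) = |\comp(\a)| - |E| \ge -2$, with equality exactly when all complementary regions are polygons. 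Since $\edefect(\a) = \chi(\a) - \chi(F(\a)) < 0$ is impossible (the defect is $\ge 0$ by Proposition \ref{Euler}), I should instead read ``unstable'' here as: $\a$ is not quasi-filling, meaning some complementary region $R$ has $\chi(R) - 1 < 0$, i.e.\ some region is an annulus or worse. Actually the right reading, consistent with the earlier examples ($\H_a$ has Euler defect $-1$, $\G$ has $-2$), is that the \emph{Euler defect} $\edefect$ is negative, which forces the number of edges to be small.

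The key step is a minimality argument on the number of arcs. Since $\a$ runs only between the two boundaries and is unstable, I would argue that $\a$ can have at most two arcs: with three or more arcs (given the genus-one, two-boundary topology and the all-arcs-to-$0$ condition) the complementary regions are forced to become polygons or once-punctured polygons, pushing $\edefect$ up to $0$ and contradicting instability — this is essentially a cut-and-paste / Euler count on the surface, analogous to the defect computations in \cite{hoch1}. So $\a$ has either one or two arcs. Next I would enumerate, up to the action of $PMC$ and up to the twists already factored out (using Lemma \ref{twistedlem} / Corollary \ref{twistedcor}), the embedded arc systems on $F_{1,2}^0$ with one or two arcs running from boundary $0$ to boundary $1$ that are not homotopic into the boundary and not parallel to each other (conditions (iii)–(iv) on embeddings). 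The one-arc case, after accounting for how the arc can wind around the handle, gives the family $\H_a$; the two-arc case, where the two arcs together cut the surface into a region whose topology still registers the handle, gives $\G$. The phrase ``up to twists'' absorbs the freedom in where the marked points sit, which is exactly the $(S^1)^{\times ?}$ ambiguity; since genus one allows the section to be only piecewise-linear (cf.\ the Remark after Lemma \ref{twistedlem}), I would be careful to phrase this as: after composing with suitable $\t_i \in \Arc_0(1)$ we may normalize $\a$ to one of the two listed pictures.

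The main obstacle I anticipate is the classification of embedded arcs on the genus-one surface up to $PMC$: a priori an arc from boundary $0$ to boundary $1$ can wrap around the handle arbitrarily many times, so there is an infinite family of homotopy classes of single arcs, and I must show that $PMC$ (Dehn twists about the core curves of the handle and about curves parallel to the boundaries) collapses all of these to a single orbit, leaving only the $S^1$-worth of positions parametrized by $a$ in $\H_a$ — and similarly that the two-arc configurations all reduce to $\G$ up to $PMC$ and twists. Concretely I would cut $F_{1,2}^0$ along the arc(s) of $\a$, identify the resulting planar surface-with-boundary and its complementary regions, check that instability forces the cut surface to be connected with the ``right'' Euler characteristic, and then reconstruct the arc system, observing that the mapping class group action removes the winding ambiguity. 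This reduces the proposition to a finite check, which is exactly what Figure \ref{unstable} records.
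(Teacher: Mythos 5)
Your overall strategy is the same as the paper's: bound the number of arcs using instability, normalize by cutting and applying the mapping class group, and enumerate the surviving configurations. The paper's proof does precisely this --- it cuts along the first arc (normalized by PMC as in Figure \ref{S21}), then along a second (Figure \ref{S212}), and observes that in the resulting picture any further arc from boundary $0$ to boundary $1$ cuts off a polygon, so three or more arcs force the element to be quasi--filling. Your concern about arcs winding around the handle is the right one to have, and it is resolved exactly as you propose, by transitivity of PMC on the relevant isotopy classes of arcs.

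There is, however, a concrete error in your enumeration, and it is not merely a labelling issue: you have $\G$ and $\H_a$ interchanged. $\G$ is the \emph{one}--arc graph: its single complementary region is the cut--open surface of genus $1$ with one boundary component, so $\chi(R)=-1$ and the Euler defect is $-2$ (pure genus defect); with a single arc it spans a $0$--cell and carries no continuous parameter. $\H_a$ is the \emph{two}--arc family: the two arcs together open up the handle, leaving a genus--zero complementary region with two boundary circles, so $\chi(R)=0$, the Euler defect is $-1$ (pure boundary defect), and the parameter $a$ records the relative weight of the two arcs. This matches the Example preceding Lemma \ref{eulerlem} ($\H_a$ has defect $-1$, $\G$ has defect $-2$) and the remark $\G=\H_0=\H_1$ made after the definition of the stabilization maps. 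Your stated reason for assigning $\G$ to the two--arc case --- that the complementary region ``still registers the handle'' --- is the opposite of what happens in the normalized position: the second arc cuts the handle open and converts genus defect into boundary defect. Since the entire content of the lemma is this enumeration, the cut--and--paste and the Euler bookkeeping need to be redone with the correct assignments; once that is fixed, the rest of your outline goes through and agrees with the paper's argument.
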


\begin{proof}
The proof is a straightforward consideration in geometric topology.
Since we are working up to a twist, we will omit the marked points
in the consideration. If there is only one arc then up to the action
of PMC the element is $\G$. Say we have several arcs. We cut along
the first arc which after PMC action we can assume to be as in
Figure \ref{S21}. The resulting  surface will have one boundary
component and genus $1$; see Figure \ref{S21}. If there is a second
arc, we can put it into the position as in Figure \ref{S212}. After
cutting along this second arc, the situation is as in the last part
of Figure \ref{S212}. But in this figure any arc running from a
piece of the boundary marked by $0$ to a piece of the boundary
marked by $1$ will cut the surface into a polygon.
\end{proof}

\begin{figure}
\epsfxsize = .8\textwidth \epsfbox{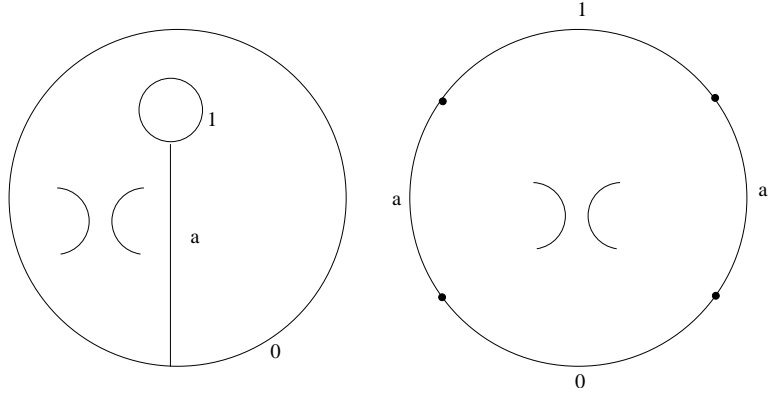}
\caption{\label{S21} An unstable graph in $\Gtree_1(1)$ with one arc}
\end{figure}
\begin{figure}
\epsfxsize = .8\textwidth \epsfbox{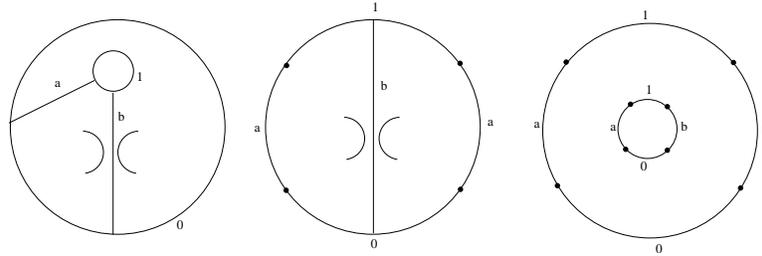}
\caption{\label{S212} An unstable graph  in $\Gtree_1(1)$ with two arcs}
\end{figure}

\begin{figure}
\epsfxsize = .5\textwidth \epsfbox{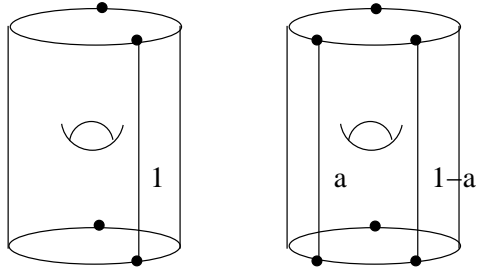}
\caption{\label{unstable} The two basic unstable arc graphs $\G$ and $\H$.}
\end{figure}

We will call  the sum of genera of the complementary regions the
genus defect and the sum of the number of boundaries minus one of
the complementary regions the boundary defect  (see the Appendix for
more details).

\begin{lem}
Any element $\a$ in $\Gtree(1)$ can be written as
\begin{equation}
\label{decompeq} \a=T_{a_0}\circ_1 \G\circ_1 T_{a_1} \circ_1\dots \circ_1
\G \circ_1 T_{a_k}\circ_1 \H_{b_1}\circ_1 T_{a_{k+1}} \dots\circ_1
\H_{b_l}\circ_1 T_{a_{k+l}} \circ_1 \a'
\end{equation}
with $\a'$ quasi--filling and not twisted at $0$. Furthermore $k$ is
the  sum of the genus defects of the complementary regions and $l$ is the sum of
boundary defects.
\end{lem}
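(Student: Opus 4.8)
The plan is to iterate the three decomposition lemmas that have just been established, peeling off one feature of $\a$ at a time until only an untwisted quasi--filling element remains. First I would apply Lemma~\ref{decomplemma1} to write $\a = \a_1 \circ_1 \a'$ with $\a_1 \in \Gtree(1)$ unstable and $\a'$ quasi--filling; then Lemma~\ref{finerlem} lets me further arrange $\a' = \a'' \circ_1 \a_0$ so that $\a_0 \in \Gtree_0(1)$ (here $n=1$) and, using Lemma~\ref{cuttinglem}, so that the cut is chosen with $\a_0$ not twisted at $0$. Absorbing $\a''$ into the unstable part (it is quasi--filling but can be handled together with $\a_1$), the real content is to analyze a general unstable element of $\Gtree(1)$. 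Since every complementary region borders the boundary $0$, I can repeatedly ``slide down and cut'' as in the proof of Lemma~\ref{decomplemma1}, isolating one complementary region of nonzero genus or nonzero excess boundary number at a time; each such cut produces a factor lying in $\Gtree_1(1)$ whose single nontrivial complementary region has either genus defect $1$ or boundary defect $1$.

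Next I would invoke Lemma~\ref{structurelemma}: an untwisted unstable element of $\Gtree_1(1)$ is, up to twists, exactly $\G$ (genus defect $1$) or $\H_a$ (boundary defect $1$). Using Corollary~\ref{twistedcor} --- or directly the decomposition $\t \circ_1 \a'$ of a twisted element from the Lemma preceding it --- each of these factors can be written as $T_{c}\circ_1 (\text{$\G$ or $\H_b$}) \circ_1 T_{c'}$, and since $T_a \circ T_b = T_{a+b}$ consecutive twists coalesce. Stacking all these factors in a single $\circ_1$--string, and collecting the twist elements sitting between consecutive unstable pieces, yields precisely the form \eqref{decompeq}: a leading twist $T_{a_0}$, then $k$ copies of $\G$ interleaved with twists, then $l$ copies of $\H_{b_j}$ interleaved with twists, then a final twist $T_{a_{k+l}}$, then the untwisted quasi--filling $\a'$. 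The statements that $k$ equals the total genus defect and $l$ the total boundary defect follow because $\chi - 1$ of the complementary regions is additive under gluing without self--gluing (Lemma~\ref{eulerlem}): each $\G$ contributes one unit of genus defect, each $\H_b$ one unit of boundary defect, each $T_a$ and the quasi--filling tail contribute nothing, and these numbers are invariants of $\a$ itself, so the counts are forced.

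The main obstacle, I expect, is bookkeeping the order of the factors and verifying that the twist elements can genuinely be floated into the claimed positions without disturbing the unstable pieces. Two points need care. First, when I slide down and cut to isolate the complementary regions, the cutting curves must be chosen so that the resulting unstable pieces are each in $\Gtree_1(1)$ with a single defect --- this is the ``one defect per ring'' step illustrated in Figure~\ref{unstableex}, and one should check it can always be realized, which again rests on the fact that all arcs run to boundary $0$. Second, commuting a twist $T_a$ past a factor $\G$ or $\H_b$ is not literally allowed, so instead one uses that a twisted unstable piece decomposes as $T_c \circ_1 (\G \text{ or } \H_b) \circ_1 T_{c'}$ and that the surviving right--hand twist $T_{c'}$ of one piece merges, via $T_{c'}\circ_1 T_{c''} = T_{c'+c''}$, with the left--hand twist of the next; so the twists really do end up interleaved one at a time, as written. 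Finally the grouping ``all $\G$'s before all $\H$'s'' is a matter of choosing the order in which one peels off the genus--defect regions versus the boundary--defect regions, which one is free to do; since distinct complementary regions are disjoint, the corresponding cuts commute and the ordering is a harmless choice. Once these geometric bookkeeping points are pinned down, \eqref{decompeq} and the identification of $k$ and $l$ are immediate.
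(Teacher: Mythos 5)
Your proposal is correct and follows essentially the same route as the paper's (very terse) proof: separate off the quasi--filling tail via Lemma~\ref{decomplemma1}, cut the unstable remainder along separating curves into pieces each carrying a single defect, identify each piece up to twists as $\G$ or $\H_b$ via Lemma~\ref{structurelemma}, coalesce adjacent twists using $T_a\circ T_b=T_{a+b}$, and order the genus--defect pieces before the boundary--defect ones. The detour through Lemma~\ref{finerlem} is unnecessary --- one can simply untwist the quasi--filling tail at its boundary $0$ and merge the resulting twist into $T_{a_{k+l}}$ --- but it does no harm.
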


\begin{rmk}
There are no free boundary defects in $\Gtree$, since all boundaries get hit.
\end{rmk}

\begin{proof}
After separating off the quasi--filling part, by the Lemma \ref{decomplemma1},
we can use separating curves to cut $\a$ so that there is at most one
handle in each piece. We can furthermore arrange the handles with no
curve passing through to be cut first.
\end{proof}

An example of this procedure is given in Figure \ref{unstableex}.

\begin{prop}
\label{structureprop} We have the following identities in $\Gtree$:
$\G\circ_1H_a=H_a\circ_1 \G$ and $\H_a\circ_1 H_b=
H_{b-1}\circ_1H_{a-1}$ and furthermore for any $\a\in \GTree:$
$\a\circ_i \G=\G\circ_1\a$ and there is some $b$ such that
$\a\circ_i \H_a=\H_b\circ_1\a$.

If $\a=\a_1\circ_1\a'$ and $\b=\b_1\circ \b'$ as in Lemma
\ref{decomplemma1} then $\a\circ_i \b = \g_1 \circ_1 \g'$ in the
same notation with $\g'= \a'\circ_i T_b \circ_i \b'$ where $b$ is
the sum of all the twists in $\b_1$.
\end{prop}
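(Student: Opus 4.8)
The plan is to establish the stated commutation identities first, by direct inspection of the gluing construction of \S\ref{gluingpar}, and then to obtain the composition formula as a purely formal consequence of those identities together with the associativity of the operad $\GTree$. Each identity says that a genus generator $\G$ or a boundary generator $\H_a$ can be transported through a gluing. In the band (partially measured foliation) picture, $\G$ carries a single band whose complementary region is a torus with one boundary, so gluing $\G$ onto the boundary $i$ of an arbitrary $\a\in\GTree$ merely attaches a handle along the bands leaving that boundary; since in $\GTree$ every band runs to boundary $0$, this handle can be slid along those bands to boundary $0$ without disturbing anything else, and the outcome is precisely $\G\circ_1\a$. This gives $\a\circ_i\G=\G\circ_1\a$, and specializing to $\a=H_a$, $i=1$ yields $\G\circ_1H_a=H_a\circ_1\G$. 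The same sliding applies to $\H$, the one difference being that carrying its boundary bubble past the marked point on boundary $0$ forces a shift of the twist parameter; tracking this shift through the window and separatrix combinatorics --- remembering the asymmetric convention that a twist at boundary $0$ is $T_{1-a}\circ_1(-)$ --- produces $\a\circ_i\H_a=\H_b\circ_1\a$ for the appropriate $b$ and, applied to a pair of $\H$'s, the braid--type relation $\H_a\circ_1H_b=H_{b-1}\circ_1H_{a-1}$. I expect this last bit of twist bookkeeping to be the only genuinely delicate point; the remaining identities are immediate from the geometry.

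Granting the identities, the composition formula is a chase. Write $\a=\a_1\circ_1\a'$ and $\b=\b_1\circ_1\b'$ as in Lemma \ref{decomplemma1}; by associativity $\a\circ_i\b=\a_1\circ_1\bigl(\a'\circ_i(\b_1\circ_1\b')\bigr)$. Using $T_a\circ T_b=T_{a+b}$ together with $T_c\circ_1\G=\G\circ_1T_c$ and $T_c\circ_1\H_d=\H_{d'}\circ_1T_c$ (instances of the identities above), all the twists inside $\b_1$ can be pushed to the innermost slot and amalgamated into a single $T_b$ with $b$ the sum of those twists, so that $\b_1\circ_1\b'=U\circ_1T_b\circ_1\b'$ with $U$ a product of $\G$'s and $\H$'s. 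Then, applying $\a'\circ_i\G=\G\circ_1\a'$ and $\a'\circ_i\H_d=\H_{d'}\circ_1\a'$ one factor at a time, interleaved with associativity --- which is legitimate because each of $\G$, $\H$ has a single input --- the whole of $U$ is extracted to the left of $\a'$ as another product $U'$ of $\G$'s and $\H$'s, leaving $\a'\circ_i(T_b\circ_1\b')=\a'\circ_iT_b\circ_i\b'=:\g'$ untouched in the innermost slot. Collecting the pieces, $\a\circ_i\b=(\a_1\circ_1U')\circ_1\g'=\g_1\circ_1\g'$ with $\g_1:=\a_1\circ_1U'$, which is the asserted shape.

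It remains to see that $\g_1\circ_1\g'$ is the standard decomposition, i.e.\ that $\g_1$ is unstable and $\g'$ is quasi--filling; this is Euler--defect bookkeeping. No composition $\circ_i$ in $\GTree$ involves self--gluing or produces closed loops (all bands run to boundary $0$), so by Lemma \ref{eulerlem} the Euler defect is exactly additive throughout, and it is carried entirely by the generators $\G$ (defect $-2$) and $\H$ (defect $-1$), the twists being quasi--filling. Hence the defect of $\g_1=\a_1\circ_1U'$ equals that of $\a_1\circ_1\b_1$, which equals the total defect of $\a\circ_i\b$; so $\g_1$ carries the negative defect of $\a_1$ and is unstable, while $\g'$ carries the remaining defect $0$ and is therefore quasi--filling. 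This pins down the decomposition and finishes the proof.
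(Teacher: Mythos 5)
Your proposal is correct and follows essentially the same route as the paper's (much terser) proof: the commutation identities come from sliding the handle/boundary defect down the bands to boundary $0$ (the paper's ``pull--down the handle and cut it off''), and the composition formula then follows formally because the intermediate twists accumulate into a single $T_b$. Your added Euler--defect bookkeeping at the end is a reasonable supplement that the paper omits.
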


\begin{proof}
The first part is straightforward. For the relations for $G$ and
$\H$ we notice that we can ``pull--down'' the handle and cut it off
just like before. Then the last part follows since the intermediate
twists will all add up.
\end{proof}

\subsection{Stabilizing at $0$}
Notice that the compositions  $\a\mapsto T_{-a} \circ_1 \H_b \circ_1
T_a \circ_1 \a$ and $\a\mapsto T_{-a} \circ_1 \G \circ_1 T_a\circ_1\a$
give maps: $\sth^g(a,b):\Gtree_g(n)\to\Gtree_{g+1}(n)$ and
$\stg^g(a):\Gtree_g(n)\to\Gtree_{g+1}(n)$.

\begin{df}
We define $\Sttree(n):=\colim_{\calS} \Gtree(n)$ where the co\-limit
is taken over the system of maps $\calS$ generated by $\stg^g(a)$
and $\sth^g(b,c)$ with $a,b\in [0,1)$ and $c\in (0,1)$. We will
denote the image of a subspace by the prefix $\St$, e.g.\
$\St\LGtree$.
\end{df}

We could of course also use that $G=H_0=H_1$, but the above is maybe
more natural. An example of stabilization is given in Figure
\ref{stabilize}.

\begin{figure}
\epsfxsize = .5\textwidth \epsfbox{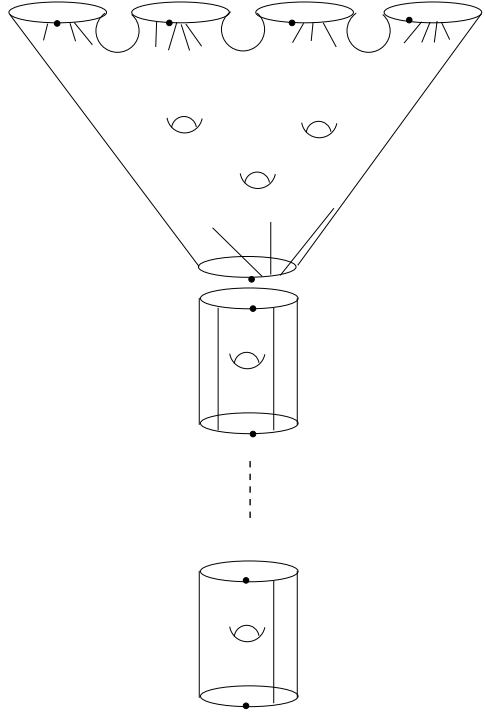}
\caption{\label{stabilize} Stabilizing.}
\end{figure}

\begin{prop}
\label{stimprop} The image of an element $\a\in \Gtree$ is given as
follows. Let $\a$ be decomposed as in equation (\ref{decompeq})
then, $[\a]=[T_b\circ_1 \a']\in \Sttree$ with $b$ the sum of all the
twists.
\end{prop}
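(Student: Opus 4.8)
The plan is to combine the structural decomposition of equation (\ref{decompeq}) with the defining relations of the colimit system $\calS$. The key observation is that in $\Sttree$ every occurrence of $\G$ and of $\H_b$ has been formally inverted: the generating maps $\stg^g(a)$ and $\sth^g(b,c)$ of the system $\calS$ send $\a'$ to $T_{-a}\circ_1\G\circ_1 T_a\circ_1\a'$ and to $T_{-a}\circ_1\H_c\circ_1 T_a\circ_1\a'$, so in the colimit the classes $[\a']$ and $[T_{-a}\circ_1\G\circ_1 T_a\circ_1\a']$ (resp.\ with $\H_c$) coincide. Hence each instance of an unstable generator $\G$ or $\H_{b_j}$ appearing in (\ref{decompeq}), together with the twist immediately to its right, can be stripped off one at a time, at the cost of conjugating the remaining twists.

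First I would use Proposition \ref{structureprop} — specifically the relations $\G\circ_1 H_a=H_a\circ_1\G$, $\H_a\circ_1 H_b=H_{b-1}\circ_1 H_{a-1}$, and $\a\circ_i\G=\G\circ_1\a$, $\a\circ_i\H_a=\H_b\circ_1\a$ — to normalize the word (\ref{decompeq}). These relations let me move all the twist factors $T_{a_j}$ past the $\G$'s and $\H$'s, collecting them; what matters in the end is only the total accumulated twist, which is why the statement records precisely $b=\sum a_j$. Concretely, I would first push every $T_{a_j}$ to the far left (absorbing the conjugation shifts coming from $\H_a\circ_1 H_b=H_{b-1}\circ_1 H_{a-1}$ into a single sum, since these shifts are by integers and we compute in $\R/\Z$), so that $[\a]=[T_b\circ_1 \G^{\circ k}\circ_1\H_{*}^{\circ l}\circ_1\a']$ for the appropriate total $b$.

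Next, I would peel off the unstable generators one at a time using the colimit relations. Since $T_b$ commutes past $\G$ and $\H$ up to the bookkeeping already accounted for, it suffices to show $[\G^{\circ k}\circ_1\H_{*}^{\circ l}\circ_1\a']=[\a']$ in $\Sttree$; but this is immediate by induction on $k+l$, because each step of removing an outermost $\G$ or $\H_c$ is exactly the inverse of applying one generator $\stg$ or $\sth$ of the system $\calS$ (after using $\a\circ_i\G=\G\circ_1\a$ etc.\ to bring the generator into the standard position $T_{-a}\circ_1(\G\text{ or }\H_c)\circ_1 T_a\circ_1(-)$ that the maps in $\calS$ have). One must check the boundary case where a stabilization parameter would be forced to $0$: the map $\sth^g$ is only defined for $c\in(0,1)$, but $\H_0=\H_1=\G$ by the remark following the definition of $\Sttree$, so any $\H$ with integral parameter is handled by $\stg$ instead, and no generator is missed. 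The quasi--filling tail $\a'$ is not twisted at $0$ by the last Lemma before equation (\ref{decompeq}), so no further normalization of $\a'$ is needed.

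The main obstacle I anticipate is the bookkeeping of twists: one must verify carefully that when the relations $\a\circ_i\H_a=\H_b\circ_1\a$ and $\H_a\circ_1 H_b=H_{b-1}\circ_1 H_{a-1}$ are applied repeatedly to bring the word into the form $T_b\circ_1(\text{unstable block})\circ_1\a'$, the resulting left twist is exactly $\sum_j a_j\in\R/\Z$ and not shifted by some uncontrolled amount — and, separately, that the conjugating twists $T_{\pm a}$ flanking each $\G$ or $\H_c$ when one recognizes a generator of $\calS$ can indeed be absorbed into that same total. Both are finite, purely algebraic manipulations in the monoid $\Arc_0(1)\simeq S^1$ using $T_a\circ T_b=T_{a+b}$, so while somewhat tedious they present no real difficulty; everything else follows formally from the definition of the colimit.
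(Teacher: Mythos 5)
Your proposal is correct and follows essentially the same route as the paper: the paper's proof simply inserts cancelling pairs $T_{-\bar a_j}\circ_1 T_{\bar a_j}$ into equation (\ref{decompeq}) so that the word becomes, on the nose, $\stg(\bar a_0)\circ\dots\circ\sth(\bar a_{k+l-1},b_l)$ applied to $T_{\bar a}\circ_1\a'$, which the colimit collapses to $[T_{\bar a}\circ_1\a']$. Your version --- first commuting the twists to the left via Proposition \ref{structureprop} and then stripping the $\G$'s and $\H$'s inductively --- is the same idea with the bookkeeping done in a different order.
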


\begin{proof}
Decompose equation (\ref{decompeq})
\begin{multline}
(T_{a_0} \circ_1 \G  \circ_1 T_{-a_0}) \circ_1
(T_{a_0+a_1} \circ_1\G \circ_1T_{-a_0-a_1})\circ_1 \dots \circ_1(T_{a_0+\dots+ a_{k-1}} \circ_1 G\circ_1
T_{-(a_0+\dots+ a_{k-1})} ) \circ_1\\
(T_{a_0+\dots+ a_{k}}\circ_1  H_{b_1}\circ_1T_{a_0+\dots +a_{k}})
\circ_1 \dots
\circ_1 (T_{\sum_0^{k+l-1}{a_i}} H_{b_l} \circ_1
T_{-\sum_0^{k+l-1}{a_i}})\circ_1 T_{\sum_0^{k+l}{a_i}}\circ_1\a'\\
=\stg(\bar a_0)\circ \dots \circ \stg(\bar a_{k-1})\circ\sth(\bar a_k,b_1)\circ\sth(\bar a_{k+l-1},b_l)(T_{\bar a}\circ_1 \a')
\end{multline}
where $\bar a_j:=\sum_{i=0}^j a_i$.
\end{proof}

\begin{cor}
\label{quasicor} As spaces $\St\LGtree(n)=\LGtree_{\#}(n)$ that is
the quasi--filling elements of $\LGtree$.
\end{cor}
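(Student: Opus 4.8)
The plan is to extract the corollary directly from Proposition \ref{stimprop}, which already describes the image of an arbitrary element of $\Gtree$ in $\Sttree$; what remains is to see that, restricted to $\LGtree$, this image is precisely the quasi--filling part $\LGtree_\#(n)$ and that the identification is a homeomorphism.

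First I would establish the inclusion $\St\LGtree(n)\subseteq\LGtree_\#(n)$. Given $\a\in\LGtree(n)$, bring it into the standard form of Lemmas \ref{decomplemma1} and \ref{finerlem} together with equation (\ref{decompeq}): a composition of twists, of handles $\G$ and $\H_{b_j}$ in $\Gtree_1(1)$, and a quasi--filling tail $\a'$ not twisted at $0$. Proposition \ref{stimprop} then gives $[\a]=[T_b\circ_1\a']$ with $b$ the total twist, and I would check three points. (i) $\a'\in\LGtree(n)$: the inserted handles sit inside complementary regions and carry no arcs, so deleting them changes neither the arcs nor their incidences and linear/cyclic orders at any boundary; in particular all arcs of $\a'$ still run to the boundary $0$ and the order there is still anti--compatible with the orders at the remaining boundaries. (ii) $T_b\circ_1\a'\in\LGtree(n)$: by definition $\LGtree=\CGtree^0$ is closed under twisting at the boundary $0$, such a twist leaving the cyclic order at $0$ and all the boundaries $i\neq0$ untouched. (iii) $T_b\circ_1\a'$ is quasi--filling: the composition $T_b\circ_1\a'$ involves no self--gluing, so $\sum_R(\chi(R)-1)$ is additive exactly as in the proof of Lemma \ref{eulerlem}, and this quantity already vanishes on $T_b$ and on $\a'$; hence $\edefect(T_b\circ_1\a')=0$. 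Thus $T_b\circ_1\a'\in\LGtree_\#(n)$ represents $[\a]$. The reverse inclusion is immediate, since a quasi--filling element is already in standard form (with $b=0$ and no handles) and so represents itself; this gives the set--level equality.

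It remains to upgrade this to a homeomorphism, i.e.\ to see that $\LGtree_\#(n)\to\St\LGtree(n)$ is injective. The colimit defining $\Sttree$ is directed, and every structure map $\stg^g(a)$, $\sth^g(b,c)$ strictly raises the genus and sends any element to an unstable one, since it inserts a handle of type $\G$ or $\H$ (of Euler defect $-2$, resp.\ $-1$). Hence if $\gamma,\gamma'\in\LGtree_\#(n)$ become identified in $\Sttree$, some common composite $S$ of structure maps has $S(\gamma)=S(\gamma')$, and decomposing this common value into standard form recovers $\gamma$ and $\gamma'$ as its quasi--filling tail, forcing $\gamma=\gamma'$. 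The main obstacle lies precisely in this last step: a priori the quasi--filling tail of an element is only canonical up to a twist at the boundary $0$, because of the freedom in the cut point $p$ of Lemma \ref{cuttinglem}, so one must show that inside $\LGtree$ the anti--compatibility of the orders rigidifies the cut---in the spirit of the trivialization argument in the proof of Lemma \ref{twistedlem}---so that the tail, and with it the twist $T_b$ above, is attached to $\a$ canonically. Granting this, the bijection $\LGtree_\#(n)\to\St\LGtree(n)$ is continuous and carries the open cell $\dot C(\g)$ of an arc graph (an open simplex of dimension $|\g|-1$ depending only on $\g$) onto a cell of the same dimension, hence is cellular with cellular inverse and therefore a homeomorphism, which is the asserted equality of spaces.
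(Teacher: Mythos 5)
Your proposal is correct and follows the same route as the paper, which states this corollary with no separate proof as an immediate consequence of Proposition \ref{stimprop}: every class has a quasi--filling representative $T_b\circ_1\a'$, and distinct quasi--filling elements are not further identified. The one step you explicitly ``grant'' --- that the quasi--filling tail is canonically attached, so that the cell decomposition by quasi--filling graphs is faithful --- is exactly the well--definedness already implicitly assumed in the statement of Proposition \ref{stimprop} itself, so your account is, if anything, more careful than the paper's.
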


\begin{thm}
The operad structure of $\Gtree$ descends to $\Sttree$. Moreover
$\St\LGtree$ and $\St\LGtree'$ are suboperads.
\end{thm}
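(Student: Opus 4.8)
The plan is to verify that the stabilization maps $\stg^g(a)$ and $\sth^g(b,c)$ are compatible with the operadic compositions $\circ_i$, so that passing to the colimit $\Sttree(n) = \colim_{\calS}\Gtree(n)$ inherits a well-defined operad structure; then to check that the two subspaces in question are closed under these compositions. The key input is Proposition \ref{structureprop}, which already records the commutation relations $\a\circ_i\G = \G\circ_1\a$ and $\a\circ_i\H_a = \H_b\circ_1\a$ for suitable $b$, together with the ``absorption'' formula for $\a\circ_i\b$ when both are written in the normal form of Lemma \ref{decomplemma1}.

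First I would make precise the statement that the stabilization system $\calS$ is a ``system of operad maps up to the colimit'': for fixed arity $n$ and each $i$, given $\a \in \Gtree_g(m)$ and $\b \in \Gtree_h(n)$, I must show that the images $[\a], [\b]$ in $\Sttree$ have a composite $[\a]\circ_i[\b]$ independent of the choice of representatives in the directed system. Concretely, using Proposition \ref{stimprop}, write $[\a] = [T_{b_\a}\circ_1\a']$ and $[\b] = [T_{b_\b}\circ_1\b']$ with $\a', \b'$ quasi--filling and untwisted at $0$, and $b_\a, b_\b$ the respective sums of twists. Then by the last part of Proposition \ref{structureprop}, $\a\circ_i\b$ decomposes with quasi--filling part $\a'\circ_i T_{b}\circ_i\b'$ (where $b$ aggregates the twists from $\b$'s unstable factor) and total twist obtained by adding all the twists; so $[\a\circ_i\b]$ depends only on the data $[\a], [\b]$, i.e.\ only on the twist sums and the quasi--filling parts. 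That is exactly the well-definedness needed: the unstable handle factors $\G, \H_\bullet$ get absorbed in the colimit, and what survives — the quasi--filling part together with an overall twist — composes in $\Gtree$ in a way compatible with the colimit projection $\Gtree \to \Sttree$. I would also check the operad axioms (associativity, unit, equivariance) descend: these hold in $\Gtree$ and the colimit projection is a map respecting $\circ_i$ up to the identifications just described, so they pass to $\Sttree$.

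For the two suboperads, I would use Corollary \ref{quasicor}: $\St\LGtree(n) = \LGtree_{\#}(n)$, the quasi--filling elements of $\LGtree$. Since $\LGtree$ is already a suboperad of $\Arc$ (by the theorem of \cite{KLP,cact,del,hoch1} quoted above) and the Euler defect is operadically filtered with defect only dropping under gluing (Lemma \ref{eulerlem}), I need that a composite of two quasi--filling elements of $\LGtree$ is again quasi--filling. This is immediate from Proposition \ref{Euler}: quasi--filling means the complementary regions are polygons, equivalently $\chi(\a) = \chi(F(\a))$; and when both pieces have polygonal complementary regions and no self--gluing occurs the glued complementary regions are again polygons, while in $\LGtree$ all arcs run to boundary $0$, so no closed loops and no problematic self--gluings arise — the argument is the same as in the proof of the twisting lemma for $\Gtree$. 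Hence $\St\LGtree$ is closed under $\circ_i$; and $\St\LGtree'$, the untwisted (at every boundary) quasi--filling elements of $\LGtree$, is closed because untwistedness is preserved under composition of untwisted elements by the first Lemma of \S\ref{arcsection} on twisting in $\Gtree$.

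The main obstacle I anticipate is the first step — making the ``colimit of not-quite-operad-maps is an operad'' argument airtight. The stabilization maps $\stg, \sth$ are \emph{not} operad maps on the nose (they insert a handle on boundary $1$, which does not commute with an arbitrary $\circ_i$ on the level of $\Gtree$), so one cannot simply invoke a general categorical fact about colimits of operads. What rescues the situation is precisely that in $\Gtree$ every element has a normal form isolating the unstable handle data, that handles can be ``pulled down and cut off'' past any composition (Proposition \ref{structureprop}), and that the handle data is exactly what the colimit forgets. So the real content is to check that $\circ_i$ on $\Sttree$ is well-defined by the formula ``compose quasi--filling parts, add twists'' and that this is associative and equivariant — all of which reduce to the identities already proved in Proposition \ref{structureprop} and the bi--crossed product description. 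I would write this out as: (i) well-definedness via Proposition \ref{stimprop} and the last clause of Proposition \ref{structureprop}; (ii) the operad axioms, inherited from $\Gtree$; (iii) closure of the two subspaces via Corollary \ref{quasicor} and Proposition \ref{Euler}.
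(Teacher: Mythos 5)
Your treatment of the first claim follows the paper's own route: the paper deduces the descent of the operad structure directly from Propositions \ref{stimprop} and \ref{structureprop}, exactly as in your steps (i)--(ii), so that part is fine.

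The gap is in your argument for the suboperads. You reduce closure of $\St\LGtree$ to the assertion that the composite of two quasi--filling elements of $\LGtree$ is again quasi--filling, justified by the claim that ``no problematic self--gluings arise.'' That assertion is unjustified and in general false: Lemma \ref{eulerlem} only says the Euler defect can \emph{drop} under gluing, and the paper explicitly records that $\Arc_{\#}$ is only a \emph{rational} suboperad, i.e.\ composition preserves the quasi--filling condition only on a dense open subset. A polygonal complementary region meeting the glued boundary in two or more segments can be glued to itself through the other surface, producing annular or higher--genus regions, and nothing in the definition of $\LGtree$ excludes this. Fortunately this is also not what needs to be proved: the descended composition on $\Sttree$ is ``compose in $\Gtree$, then pass to the class,'' so closure of $\St\LGtree$ (and of $\St\LGtree'$) follows at once from the fact that $\LGtree$ and $\LGtree'$ are already suboperads of $\Gtree$ upstairs --- the image of a suboperad under the colimit projection is closed under the descended composition, whether or not the composite happens to be quasi--filling before stabilizing. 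The one point that does require checking --- and it is the paper's stated reason --- is that the stabilization maps $\stg^g(a)=T_{-a}\circ_1\G\circ_1 T_a\circ_1(-)$ and $\sth^g(a,b)$ add a \emph{net twist of zero}, so that the stabilized normal form $[T_b\circ_1\a']$ of Proposition \ref{stimprop} of an element of $\LGtree$ (resp.\ $\LGtree'$, where one needs $b=0$) is again represented by an element of $\LGtree$ (resp.\ $\LGtree'$), making the identification of $\St\LGtree$ with quasi--filling representatives compatible with the compositions. Your proposal omits this point entirely.
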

\begin{proof}
The fact that the operad structure descends is a direct consequence
of Proposition \ref{stimprop} and Proposition \ref{structureprop}.
Since the stabilization adds a net twist of zero, the claims for
the suboperads hold true.
\end{proof}

\subsection{Degeneracies and thickening $\St\LGtree$}
\subsubsection{Preoperads and weak unital operads}
Recall that a preoperad is given by a collection $\calO(n),n>0$
together with $\Sn$ actions and degeneracy maps $s_i:\calO(n)\to
\calO(n-1)$, see (e.g.~\cite{MSS, Berger} for details) which are
$\Sn$ equivariant and satisfy the usual relations.

According to the language of \cite{may} an operad $\calO=\{\calO(n)\},n\geq 0$
is unital if $\calO(0)=*$ that is it is a point.
Any unital operad gives a preoperad by forgetting all the structure maps,
except the composition with $\calO(0)$ and the identity ${\bf 1}$ in $\calO(1)$:
$s_i(a):=\Gamma(a;{\bf 1},\dots,{\bf 1},*,{\bf 1},\dots,{\bf 1})$ where
$*$ is in the $i$--th position.

We would also like to consider the new notion of a {\em weak unital operad}
which is given by an operad  $\calO=\{\calO(n)\},n> 0$ together
with a preoperad structure on the collection $\calO$. Any unital operad
yields a weak unital operad by forgetting $\calO(0)$ but retaining
the induced degeneracy maps. Finally we call a weak unital topological
operad a quasi--unital operad if $\calO(n),n>0$ form an operad $\calO(0)=*$
and the $\calO(n),n\geq 0$ form a quasi--operad (viz.~homotopy associative)
and defines
a preoperad structure.
The degeneracies only need to  commute with the other operadic compositions
up to homotopy.

\begin{nota}
For a preoperad,
we define $\phi^*_{ij}:\O(n)\to \O(2)$,$1\leq i<j\leq n$  by using
the degeneracy maps in all entries
{\em everywhere except} at $i$ and $j$.

For an operad  $\O$ with a fixed element $*\in \O(0)$
 $\phi^*_{ij}$ is the map that
glues in $*$ {\em everywhere except} at $i$ and $j$, viz $a\mapsto
\Gamma(a;*,\dots,*,{\bf 1},\dots,*,{\bf 1},*\dots,*)$, where ${\bf 1}$
is in the $i$--th and $j$--th position.

\end{nota}

\subsubsection{Adding degeneracies}

The operad $\Gtree$ does not have
a $0$--th space. We can add a $0$--th space $\Gtree(0)$
consisting of all surfaces $F_{g,1}$, that is
genus $g$ with one boundary component
that has a marked point and the empty foliation.
The operadic compositions are given by the extended gluing,
which erases arcs.

There is a special element $*$ which is  the disc $D$ with a marked
point on the boundary and without any arcs.  For any arc family $\a$
we define $s_i(\a)$ to be the arc family resulting from gluing in
$D$ into the $i$--th boundary using the extended gluing of \S
\ref{extendedpar}.

\begin{prop}
Adding $\Gtree(0)$ gives $\Gtree$ the structure of
a quasi--operad. Using $*$ to define
degeneracies $s_i$ gives
$\Gtree(n),n>0$ the structure of a preoperad. This structure descends to the
stabilization, where $\St\Gtree(0)$ is a point which is the image of $D$.
The collection $\St\Gtree(n),n\geq0$ is a quasi--unital operad.
\end{prop}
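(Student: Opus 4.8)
The plan is to verify each clause of the statement by reducing it to structural facts already established for $\Gtree$ and for the stabilization. First I would treat the claim that adding $\Gtree(0)$ gives $\Gtree$ the structure of a quasi--operad: the compositions $\circ_i$ with elements of the new $0$--th space are the extended gluings of \S\ref{extendedpar}, which erase the arcs incident to the boundary being glued onto. Associativity on the full collection $\Gtree(n),n\geq 0$ must be checked in the cases where at least one of the three inputs lies in $\Gtree(0)$. When two surfaces with nonempty foliations are glued along surfaces with empty foliation, the order in which one erases arcs and performs the honest gluing does not matter on the underlying surface, but the projective weight normalization can differ; that is precisely why one only gets homotopy associativity. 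So here I would exhibit, for each reassociation identity involving a $0$--input, an explicit rescaling path of the surviving weights interpolating the two parenthesizations, using that the underlying arc graph is literally the same (only the projective class of $w$ moves). This gives the quasi--operad structure on $\Gtree(n),n\geq 0$.

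Next, the preoperad claim for $\Gtree(n),n>0$: the degeneracy $s_i$ is gluing in the disc $D$ (the element $*\in\Gtree(0)$) at the $i$--th boundary via extended gluing. I would check the simplicial identities $s_i s_j = s_{j-1} s_i$ for $i<j$ and the $\Sn$--equivariance directly. Since $D$ carries no arcs, $s_i(\a)$ just deletes the arcs at boundary $i$, relabels, and reindexes boundaries; two such deletions commute on the nose because the sets of arcs being deleted are disjoint and the relabelling is the standard one. Equivariance is immediate from the definition of the $\Sn$ action permuting boundaries $1,\dots,n$. Because this is an honest equality (no weights need renormalizing when the glued-in piece has empty foliation — there is simply nothing to match), we get a genuine preoperad, not merely a homotopy one.

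Then I would push everything through the stabilization colimit. By Proposition \ref{stimprop}, $[\a]=[T_b\circ_1\a']$ with $\a'$ quasi--filling and $b$ the total twist; and by Corollary \ref{quasicor}, $\St\LGtree(n)$ is the quasi--filling part. The key point for $\St\Gtree(0)$ being a point is that every $F_{g,1}$ with empty foliation is, after stabilization, identified with the disc $D$: indeed the stabilization maps $\stg^g, \sth^g$ attach a handle (a $\G$ or $\H_a$ factor) via $\circ_1$, and in the colimit these are inverted, so any positive--genus surface with no arcs collapses to $g=0$, i.e.\ to $*$. For this I would note that the empty--foliation surface $F_{g,1}$ is $\stg$ applied repeatedly to $F_{0,1}=D$ (after erasing arcs, which the extended gluing does), so its class in the colimit is $[D]$. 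I must also check that the degeneracies $s_i$ descend: since stabilization only changes the net twist by zero (as in the proof that $\St\LGtree$ is a suboperad) and $s_i$ is gluing in an arc--free disc, $s_i$ commutes with the stabilization maps, so it passes to $\St\Gtree$.

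Finally, for the assertion that $\St\Gtree(n),n\geq 0$ is a quasi--unital operad, I would assemble the pieces: Theorem above (the operad structure of $\Gtree$ descends to $\Sttree$) gives that $\St\Gtree(n),n>0$ form an operad; $\St\Gtree(0)=\{*\}$ is a point; the quasi--operad structure on $\Gtree(n),n\geq 0$ descends by Proposition \ref{stimprop} (the reassociation homotopies involving a $0$--input descend because they were weight rescalings, and rescaling commutes with stabilization); and the degeneracies descend as just argued, commuting with the other compositions up to homotopy for the same reason. I expect the main obstacle to be the careful bookkeeping in the quasi--operad part: showing that the only failure of strict associativity in the presence of a $\Gtree(0)$--input is the projective normalization of the surviving weights, and producing the interpolating rescaling coherently enough that it descends to the colimit. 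Everything else is a routine check of simplicial identities and of compatibility with the already--understood colimit.
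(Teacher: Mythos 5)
Your overall architecture matches the paper's: closure of $\Gtree$ under the extended gluing, homotopy associativity via a weight homotopy, strict simplicial identities for the $s_i$, and the identification of $\St\Gtree(0)$ with a point by writing $F_{g,1}$ with the empty foliation as an iterated stabilization $G\circ_1\cdots\circ_1 G\circ_1 D$ of the disc. However, there is a genuine flaw in your justification of homotopy associativity. You assert that the two parenthesizations of a composition involving a $\Gtree(0)$--input produce ``literally the same'' underlying arc graph, differing only in the projective class of $w$, and you propose a rescaling path inside that single cell. This is false in general. Compare $(\a\circ_i\b)$ followed by erasing the arcs at a boundary $j$ coming from $\b$ with $\a\circ_i(\text{erase at } j \text{ first})$: in the first case $\a$'s bands at boundary $i$ are split according to the \emph{full} foliation at $\b$'s boundary $0$ (including the arcs destined for erasure) and only the surviving sub-bands remain, while in the second case the normalization matches $\a$'s bands against the \emph{reduced} foliation, so the splitting combinatorics --- and hence the underlying graph, not merely the weights --- can differ; a band of $\a$ that lands entirely on a to-be-erased band survives in one parenthesization and is deleted in the other. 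A homotopy confined to one open cell cannot connect points lying in different cells. The homotopy the paper intends (and only gestures at with ``increasing and decreasing the weights according to the erased weights'') shrinks the weights of the arcs destined for erasure continuously to zero before gluing; this path necessarily crosses cell boundaries, since an arc is deleted exactly when its weight reaches $0$, and that is what makes it work.

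Two smaller omissions: you never verify that the extended gluing keeps you inside $\Gtree$ --- that all remaining arcs still run from boundaries $i\neq 0$ to boundary $0$, and that in the $n=1$ case, where everything is erased, one lands in $\Gtree(0)$ --- which is the first step of the paper's proof. Also, ``in the colimit these are inverted'' should read ``in the colimit the source is identified with its image under the stabilization maps,'' though your conclusion $[F_{g,1}]=[D]$ is correct.
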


\begin{proof}
First we notice that the spaces are stable under the extended
gluing. The effect of gluing in a surface with one boundary and an
empty foliation is that the boundary is filled in by the surface and
all the arcs running to this boundary are deleted. First we notice
that indeed this decreases the boundaries by one and secondly the
result is still in $\Gtree$ if $n\geq 2$ as all arcs still run only
from the boundaries $i\neq 0$ to $0$. In case that $n=1$ after
gluing in the surface we erase all arcs and obtain a surface with an
empty foliation.

Secondly although the gluing using deletion is not strictly associative,
it is homotopy associative. The homotopy which is
tedious to write out is given by increasing and decreasing
the weights according to the erased weights. Another proof of this fact
comes from the thickening construction below.

The
associativity and ${\mathbb S}_n$ equivariance are clear. The
fact that stabilization goes over well is straightforward.

Finally notice that we can write any of $F_{g,1}$ with the empty foliation
as the composition of
$G\circ_1G\circ_1\dots \circ_1 G \circ_1D$ with $g$ factors of $G$.
This proves the last statement.
\end{proof}

\subsection{Thickening}
Although $\St\LGtree$ can be made into a quasi--unital operad by
simply adding a point which is the image of the disc as a $0$--th
component, the extended gluing will fail to be associative on the
nose, however, and will only be associative up to homotopy. This
would of course be enough for the homology level and is even enough
for a cellular chain model (see \S \ref{chainpar}), but in order to
use the results of \cite{Berger} and the recognition principle of
\cite{may2}, we will have to have a {\em bona fide} unital operad.

In order to achieve this we will thicken our construction just enough
to keep track of the homotopies involved.

\subsubsection{Thickening the operad}
\label{thickendpar} As spaces we define $\Fat\calD\Gtree(n)$ for
$n>0$ to be given by pairs $(\a,\gap)$ where $\a$ is a generalized
weighted arc graph on a surface $F_{g,n+1}$ with marked points on
the boundary and $\gap$ is a gap labelling function. In particular
the graphs we consider are PMC orbits of exhaustive graphs on
surfaces whose edges all run from the boundaries $i\neq 0$ to $0$,
where we now allow the edges to be parallel,  and $\gap$ is a map
$\gap:*\amalg E_{\Gamma}\to \R_{\geq 0}$, such that if $e$ and $e'$
with $e\prec_0 e'$ are parallel then $\gap(e)>0$. We think of the
value of $\gap$ on an edge as the width of gap after this edge and
the value  $\gap(*)$ as the gap before the first edge.

We let $|\gap|:=\sum_{e\in E_{\Gamma}}gap(e)+\gap(*)$. Scaling
$(\a,\gap)$ by $\lambda\in \R_{\geq 0}$ means that we simultaneously
scale all weights of $\a$ and scale $\gap$ to $\lambda\gap$ where
$(\lambda\gap)(x)=\lambda \cdot \gap(x)$. We call the value
$\gap(x)$ the width of the gap. The width may be zero. The total
weight at zero  $|\gap|+\sum_{e\in \Gamma}w(e)$ will be positive.

We define $\Fat\calD\Gtree(0)$ be the set of pairs $(F',\gap)$ where
$F'=F_{g,1}$ is a surface with marked points on the boundary
considered to have an empty foliation and $\gap:*\to\Rp$ is
arbitrary. Notice that this makes the total weight at $0$ positive.

\subsubsection{Thickened gluing}
The composition $(\a,\gap)\circ_i (\a'\,gap')$ is defined to be the
pair $(\a'',\gap'')$ obtained as follows: first glue the surfaces as
previously; secondly glue the foliations and gaps in the following
perturbed way. As before and in \cite{KLP} we fix a measure on the
surface to turn edges with weights into bands of a foliation. As in
{\em loc.~cit.} the construction does not depend on this choice.

\begin{enumerate}
\item Let $w_1$ be the sum $|\gap|$
and the weights at $0$ of $\a'$. Let $w_2$ be the sum
of the weights at $i$ of $\a$.
\item Scale $(\a,\gap)$ by $w_1$ and $(\a',\gap')$ by $w_2$.
\item Glue the foliations along an interval of
width $w_1w_2$ as follows. Arrange the foliations on the interval so
that the ends of the scaled foliation at the boundary fill out the
interval on one side and on the other side arrange the bands in the
following way. The initial point of the interval corresponds to the
marked points on the boundary. First, leave a gap of width
$w_1\gap(*)$ then attach the first band corresponding to $e_1$ with
width $w_1w(e_1)$, then again leave a gap this time of width
$w_1\gap(e_1)$ and so on. Now, a) fuse leaves which share the same
endpoint and b) if leaves end on a gap erase the leaf, but mind the
width of the gap and add its weight to the gap(s) at the boundary
$0$ to which the band of erased leaves are adjacent. This may result
in the creation of new non--zero gaps or the consolidation of
several gaps.
\item Remove any closed leaves.
\end{enumerate}

\begin{figure}
\epsfxsize = \textwidth \epsfbox{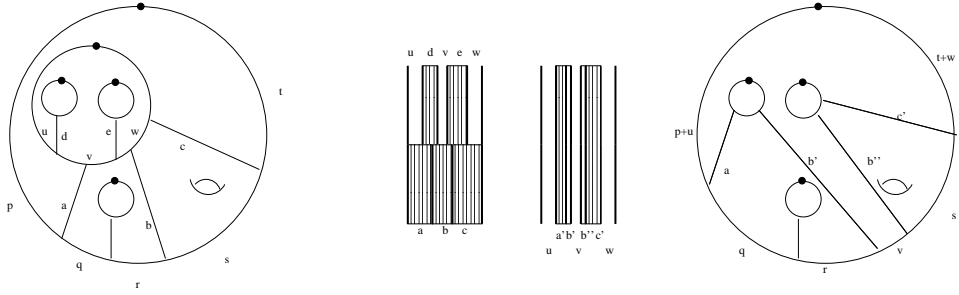} \caption{\label{gaps}
Gluing with gaps. The weights at the glued boundaries are $a,b,c$ and $d,e$ while
the weight of the gaps at the glued boundary $0$ are $u,v,w$ and the width of the gaps
at the boundary $0$ that is not glued are $p,q,r,s,t$.
The new weights satisfy the equations:  $a=a'+u,b=b'+v+b'', c=c'+w, d=a'+b',e=b''+c'$}
\end{figure}

Just like for $\calD\Arc$ there is a transitive $\Rp$ action given
by  scaling $\Rp\times \Fat\calD\Gtree\to \Gtree$ where
$\lambda(\a,\gap):=(\lambda\a,\lambda \gap)$.

We set $\Fat\Gtree(n):=\Fat\calD\Gtree(n)/\Rp$.

\begin{prop} The spaces $\Fat\calD\Gtree(n),n\geq 0$
form an operad using the gluings described
above and the permutation action on the boundary labels.
This operad structure descends
to the collection $\Fat\Gtree(n),n\geq 0$.
\end{prop}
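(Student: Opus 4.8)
The plan is to verify the operad axioms for $\Fat\calD\Gtree$ directly from the thickened gluing recipe of \S\ref{thickendpar}, and then to check that the transitive $\Rp$--scaling is compatible with all compositions so that the structure passes to the quotient $\Fat\Gtree(n)=\Fat\calD\Gtree(n)/\Rp$. The symmetric group action is just relabelling the boundaries $1,\dots,n$ and plainly commutes with gluing, so $\Sn$--equivariance is immediate. Likewise the element $D\in\Fat\calD\Gtree(0)$ (the disc with empty foliation and an arbitrary gap $\gap(*)\in\Rp$) serves as the operadic zero; the unit ${\bf 1}\in\Fat\calD\Gtree(1)$ is the annulus with a single weight--one arc and no gaps, and unitality $(\a,\gap)\circ_i{\bf 1}=(\a,\gap)={\bf 1}\circ_1(\a,\gap)$ follows because gluing in ${\bf 1}$ matches each band to itself and introduces no new gaps. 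The real content is associativity.

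First I would set up bookkeeping for a single composition: after scaling $(\a,\gap)$ by $w_1$ and $(\a',\gap')$ by $w_2$ as in step (2), the glued boundary has total length $w_1w_2$, and on it we have, reading from the marked point, the pattern ``gap $w_1\gap(*)$, band $w_1w(e_1)$, gap $w_1\gap(e_1)$, \dots'' on one side and the full foliation of $\a$ at boundary $i$ on the other. Fusing leaves with a common endpoint (step 3a) and erasing leaves that hit a gap while transferring the gap width to the adjacent gap(s) at boundary $0$ (step 3b) produces a new weighted arc graph with new gap function; the key numerical identities are exactly those displayed in Figure \ref{gaps}, namely each new weight at $0$ is the sum of the pieces of old bands that survive, and each new gap at $0$ collects the widths of erased bands plus pre--existing gaps. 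The point is that these identities are \emph{linear} in the weights and gaps and are manifestly associative as a combinatorial operation, in the same way that the $\Arc$ gluing (Theorem \cite{KLP}) is associative once one passes to the foliation picture --- cutting a band recursively by separatrices and then consolidating parallel leaves does not depend on the order in which one performs the cuts.

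Concretely, to prove $((\a,\gap)\circ_i(\b,\gap'))\circ_j(\c,\gap'')=((\a,\gap)\circ_j(\c,\gap''))\circ_i(\b,\gap')$ for $i\neq j$, I would note that both sides glue the three surfaces along the same two separating curves and it suffices to check that the resulting foliation--with--gaps is the same; since the two gluings take place at disjoint boundaries of $\a$, the scalings in step (2) multiply consistently (the scalar applied to $\b$ and to $\c$ in either order is the product of the total weights encountered), and the fuse/erase operations of step (3) at one boundary do not interfere with those at the other. For the nested case $(\a,\gap)\circ_i((\b,\gap')\circ_j(\c,\gap''))=((\a,\gap)\circ_i(\b,\gap'))\circ_{i+j-1}(\c,\gap'')$ one uses that a leaf of $\c$ which, after the inner gluing, ends on a gap of $\b$ gets erased with its width passed to $\b$'s gap at $0$, and that gap then behaves under the outer gluing exactly as the corresponding gap would have behaved had we glued $\a$ to $\b$ first and then $\c$ --- the recursive cutting is confluent. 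This is the same confluence phenomenon that makes the unthickened extended gluing only \emph{homotopy} associative; the whole purpose of the gap data is that keeping track of the widths $w_1\gap(*),w_1\gap(e_1),\dots$ rather than discarding them turns that homotopy into an equality. I expect this confluence check --- verifying that erased--leaf widths are transported consistently under reassociation, including the subtle case where a closed leaf is created (step 4) at one stage versus another --- to be the main obstacle, and the cleanest way to handle it is to describe the whole triple composition intrinsically (glue all three surfaces simultaneously along both curves, then run one global fuse/erase/remove--closed--loops pass) and observe that each pairwise--then--third composition computes the same intrinsic object.

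Finally, scaling: $\lambda\cdot(\a,\gap)=(\lambda\a,\lambda\gap)$ multiplies $|\gap|$ and all weights by $\lambda$, hence multiplies $w_1$ (resp.\ $w_2$) and leaves the \emph{normalized} pattern on the glued interval unchanged up to overall scale; therefore $(\lambda\a,\lambda\gap)\circ_i(\a',\gap')$ and $(\a,\gap)\circ_i(\lambda\a',\lambda\gap')$ are both equal to $\lambda''$ times $(\a,\gap)\circ_i(\a',\gap')$ for suitable $\lambda''$, so the $\Rp$ action is operadic and the operad structure descends to $\Fat\Gtree(n)=\Fat\calD\Gtree(n)/\Rp$, with $\Fat\Gtree(0)=\{[D]\}$ a point. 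This gives the claimed operad.
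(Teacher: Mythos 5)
Your proposal is correct and takes essentially the same route as the paper's (much terser) proof: both reduce associativity to deferring the erasure of leaves --- the paper keeps the would-be-erased leaves ending on the separating curves until all gluings are done and then erases globally, which is precisely your ``glue all surfaces simultaneously along both curves, then run one global fuse/erase pass'' reduction. One minor directional slip worth fixing: in any single gluing $\a\circ_i\a'$ the gaps sit at boundary $0$ of $\a'$ while the foliation of $\a$ at boundary $i$ is tight, so it is leaves of $\a$ that may end on gaps of $\a'$ (their widths being transferred to gaps at boundary $0$ of $\a$), not the reverse as written in your nested-composition paragraph; this does not affect the argument.
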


\begin{proof}
This is a straightforward but tedious check. The basic reasoning is
that instead of erasing the leaves of the foliation, we can leave
them ending on the separating curve that is the image of the boundaries under
the gluing. Gluing in this way
is associative. Now we  can erase the respective
leaves after all the gluings are done and this coincides with the previously
defined gluing.
\end{proof}

\begin{prop}
\label{retractprop}
There are operadic inclusions\\
$\{\calD\Gtree(n),n>0\}\hookrightarrow \Fat\calD\Gtree$
and $\{\Gtree(n),n>0\}\hookrightarrow \Fat\Gtree$.

Furthermore $\Fat\calD\Gtree(n), n\geq 0$ retracts onto
$\calD\Gtree(n)$ and\\ $\Fat\Gtree(n),n\geq0$ onto $\Gtree(n)$.
\end{prop}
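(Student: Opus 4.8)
The plan is to establish the two assertions separately: the operadic inclusions first, then the retractions.

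For the inclusions, I would construct an explicit map $\iota:\calD\Gtree(n)\hookrightarrow\Fat\calD\Gtree(n)$ (and likewise on the projectivized versions) by sending a weighted arc graph $\a$ to the pair $(\a,\gap)$ where $\gap$ is identically zero on all edges and on $*$. One must check that this lands in $\Fat\calD\Gtree(n)$: the only constraint is that parallel edges $e\prec_0 e'$ have $\gap(e)>0$, but an ordinary arc graph in $\Gtree$ has \emph{no} parallel edges (this was noted after the gluing description in \S\ref{gluingpar}), so the condition is vacuous and $\iota$ is well defined. Injectivity and $\Sn$--equivariance are immediate. Compatibility with the $\circ_i$ requires comparing the thickened gluing of \S\ref{thickendpar} with the ordinary gluing of \cite{KLP}: when both gap functions are zero, steps (1)--(2) reduce to the usual normalization of total weights, step (3)'s gap--insertion contributes nothing, the fusing of leaves (3a) is exactly the ordinary consolidation of bands, the erasure rule (3b) is never triggered because there are no gaps to erase onto, and step (4) is the usual removal of closed loops. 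Hence $\iota(\a)\circ_i\iota(\a')=\iota(\a\circ_i\a')$, and $\iota$ descends to $\{\Gtree(n)\}\hookrightarrow\Fat\Gtree$ by compatibility with the $\Rp$--scaling. (For $n=0$ one notes $\calD\Gtree(0)$ is not part of the inclusion's source, as stated.)

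For the retractions, I would exhibit a deformation retraction $r_t:\Fat\calD\Gtree(n)\to\Fat\calD\Gtree(n)$, $t\in[0,1]$, that at $t=1$ is the identity and at $t=0$ lands in the image of $\iota$. The natural choice is to linearly shrink all gaps: given $(\a,\gap)$, set $r_t(\a,\gap):=(\a,t\,\gap)$, interpreting $t\,\gap$ as the gap labelling $x\mapsto t\cdot\gap(x)$. At $t=0$ this gives $(\a,0)$; but one must be careful, because if $\a$ has parallel edges then $(\a,0)$ violates the defining condition, so the honest retraction must simultaneously \emph{consolidate} parallel bands as their separating gaps close. Concretely, $r_0$ should send $(\a,\gap)$ to $\iota$ of the arc graph obtained from $\a$ by fusing each maximal run of parallel edges into a single edge whose weight is the sum, and discarding the gap data; for $t>0$ small the parallel bands with positive gap are kept apart, and as $t\to 0$ the positive-gap condition degenerates exactly when bands merge. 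So the retraction is: let $r_t(\a,\gap)$ keep the weights, scale all \emph{strictly positive} gaps by $t$, and at $t=0$ pass to the quotient that collapses the now-zero gaps together with the parallel edges they separated. One checks this is continuous (the only subtlety is at the locus where a gap between parallel edges hits $0$, where the topology of $\Fat\calD\Gtree$ is precisely designed so that such a point is identified with the consolidated configuration), is a strong deformation retraction onto $\iota(\calD\Gtree(n))\cong\calD\Gtree(n)$, is $\Sn$--equivariant, and commutes with $\Rp$--scaling so it descends to $\Fat\Gtree(n)\to\Gtree(n)$; the $n=0$ case collapses $\gap(*)$ and is trivial.

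The main obstacle is the continuity and well-definedness of $r_0$ at the boundary locus where a gap separating two parallel edges vanishes. One has to argue that the cell structure / topology on $\Fat\calD\Gtree(n)$ built in \S\ref{thickendpar} genuinely identifies ``$(\a,\gap)$ with $\gap(e)\to 0$ for $e$ between parallel $e\prec_0 e'$'' with ``the arc graph in which $e,e'$ are consolidated'' — i.e.\ that closing a gap between parallel bands is the same operation as the deletion/consolidation that already occurs in the limit of the arc-graph CW structure of \S\ref{subspaces}. This is exactly parallel to the statement, recorded in the gluing discussion, that after gluing no parallel arcs survive because parallel leaves are collected into one band; so the identification is forced by the construction, but spelling out that the retraction respects it — and that the homotopy is jointly continuous in $t$ and the point — is the one genuinely non-formal check. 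Everything else (equivariance, $\Rp$--compatibility, the retraction identities $r_1=\mathrm{id}$, $r_0\circ\iota=\mathrm{id}$, $\iota\circ r_0 \simeq \mathrm{id}$) is routine.
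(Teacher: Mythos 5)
Your proposal is correct and follows essentially the same route as the paper: the inclusion is $\a\mapsto(\a,0)$, and the retraction scales the gaps to zero while consolidating parallel bands into single edges with summed weights (the paper handles $n=0$ by contracting the $\Rp$ factor of $\gap(*)$ to the point $1$, which is what your "trivial" case amounts to). The continuity check at the locus where a gap between parallel edges closes, which you flag as the one non-formal point, is exactly the consolidation step the paper's proof invokes without further comment.
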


\begin{proof}
The operadic inclusion is given by $\a\mapsto (\a,0)$ where $0$ is
the constant map with value $0$. For $n>0$ the retraction is given
by scaling $\gap$ to $0$ and consolidating bands corresponding to
parallel edges, by adding the weights of parallel edges and keeping
only one edge per set of parallel edges. For $n=0$ we just contract
$\Rp$ to the point $1$.
\end{proof}

\begin{cor}
\label{equivcor}
The operads $\{\Fat\calD\Gtree(n), n>0\}$ and $\{\calD\Gtree(n),n>0\}$
are equivalent.
\end{cor}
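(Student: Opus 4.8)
The plan is to read the statement off Proposition \ref{retractprop}. That proposition already supplies, for $n>0$, an operad inclusion $\iota\colon\calD\Gtree(n)\hookrightarrow\Fat\calD\Gtree(n)$, $\a\mapsto(\a,0)$, and a retraction $r\colon\Fat\calD\Gtree(n)\to\calD\Gtree(n)$ with $r\circ\iota=\mathrm{id}$ (on the nose, since elements of $\calD\Gtree$ carry arc graphs with no parallel edges, so the consolidation step in the definition of $r$ is trivial there). To upgrade this to an equivalence of operads in the sense used in the excerpt (cf.\ the proposition giving $\mathcal{DS}\sim\mathcal{S}$), it suffices to exhibit a homotopy $\iota\circ r\simeq\mathrm{id}$ on $\Fat\calD\Gtree(n)$ that is stationary on the image of $\iota$: then $\iota$ is an operad map which is objectwise a homotopy equivalence, and we are done.

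First I would write down the evident homotopy $H((\a,\gap),s):=(\a,(1-s)\gap)$ that scales the gap function linearly to zero, and check its formal properties. We have $H_0=\mathrm{id}$; $H_s\circ\iota=\iota$ for every $s$, since $\iota(\a)=(\a,0)$ already has vanishing gaps; and $H_1=\iota\circ r$, because once every gap is $0$ no two parallel bands remain separated, so $(\a,0)$ is, as an element of $\Fat\calD\Gtree(n)$, the consolidated arc graph $r(\a,\gap)$ viewed through $\iota$. I would also note that $H$ is $\Rp$--equivariant for the scaling action, so the same argument descends to $\Fat\Gtree(n)$ versus $\Gtree(n)$, even though only the $\calD$--version is asserted here.

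The hard part --- and essentially the only non--formal point --- will be the continuity of $H$, specifically along the locus where a gap coordinate hits $0$ and two previously parallel bands get fused. I would argue this exactly as for the familiar collapse in $\A^s_{g,r}$ reviewed in Section \ref{arcsection}, where an arc disappears as its projective weight tends to $0$: the gap widths are additional barycentric--type coordinates in the cell structure of $\Fat\calD\Gtree(n)$, and setting such a coordinate to $0$ is precisely the face identification built into the attaching maps, namely the passage from a cell carrying two parallel bands with an intervening gap to the cell in which those bands have been consolidated into one. Within each cell $H$ is just a linear rescaling of the gap coordinates, and it is compatible with these attaching identifications, hence continuous; that it is well defined, independently of the chosen representative and of the auxiliary measure used to fatten weights into bands, is inherited from the corresponding properties of the thickened gluing in \S\ref{thickendpar}. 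Once continuity is in place, $\iota$ is an objectwise homotopy equivalence of operads, giving $\{\Fat\calD\Gtree(n),n>0\}\sim\{\calD\Gtree(n),n>0\}$.
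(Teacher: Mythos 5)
Your proposal is correct and follows exactly the route the paper intends: the corollary is stated as an immediate consequence of Proposition \ref{retractprop}, whose proof supplies the operadic inclusion $\a\mapsto(\a,0)$ and the retraction by scaling $\gap$ to $0$ and consolidating parallel bands, and the paper gives no further argument. Your explicit homotopy $H((\a,\gap),s)=(\a,(1-s)\gap)$ and the continuity discussion at the consolidation locus simply flesh out the details the paper leaves implicit.
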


\subsubsection{Stabilizing $\Fat\Gtree$}
Since we have established the inclusion, we have the system of
maps $\calS$ generated by $\stg(a)$ and $\sth(b,c)$. We also have
the grading by genus of the underlying
surface, which we again write as a subscript.

We set $\Fat\Sttree(n):=\colim_{\calS} \Fat\Gtree(n)$.
Notice that  $\Fat\St\Gtree(0)$ is again a point. It can be given as $([D],[1])$
where $[D]$ is the image of the disc under stabilization and $[1]$ is
the orbit of the constant map $\gap(*)=1$ under the $\Rp$ action.

In order for the operad structure to descend, we will need structure lemmata
as before.
\begin{lem}
\label{fatlemone}
For every $\t\in \Fat\Gtree_0(1))$
and $\a \Fat\LGtree(1)$:  $\t\circ_1\a=\a\circ \t$.
\end{lem}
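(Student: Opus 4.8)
The plan is to reduce the statement to a computation with measured foliations on the surface. An element $\t \in \Fat\Gtree_0(1)$ is (up to the $\Rp$-scaling) a genus-zero surface $F_{0,2}$ with a gap-labelled weighted arc graph all of whose arcs run from boundary $1$ to boundary $0$; since the genus is zero and there is only one other boundary, $\t$ is essentially a rotation element $T_a$ with possibly some gaps inserted, and its only role in a $\circ_1$-composition is to rotate the window at the boundary onto which we glue while possibly leaving gaps. An element $\a \in \Fat\LGtree(1)$ is a gap-labelled weighted arc graph on some $F_{g,2}$ whose linear orders at boundary $0$ and boundary $1$ are anti-compatible (Definition in \S\ref{subspaces}). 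First I would unwind both composites $\t\circ_1\a$ and $\a\circ_1\t$ using the thickened gluing recipe of \S\ref{thickendpar}: in each case one scales the two surfaces so the total weights (including $|\gap|$) match along the glued boundary, arranges the bands and gaps along a common interval, fuses leaves sharing an endpoint, erases leaves that land on a gap (redistributing their width to the gaps at boundary $0$), and removes closed leaves.

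The key step is to observe that gluing $\t$ — a genus-zero ``rotation-with-gaps'' element — on either side of $\a$ produces the \emph{same} surface $F_{g,2}$ (the genus and boundary count of $\t$ contribute nothing) and the \emph{same} underlying arc graph, with the effect being purely to cyclically reposition the marked point on the affected boundary of $\a$ and to introduce (or absorb) gap widths there. Concretely, I would track what happens to the marked point: composing with $\t$ on the right ($\a\circ_1\t$) moves the marked point on the boundary $1$ of $\a$ by the net twist carried by $\t$; composing with $\t$ on the left ($\t\circ_1\a$) moves the marked point on the boundary $0$ of $\a$ by the complementary amount (recall from the Definition of twisting at $0$ that the twist at boundary $0$ is $T_{1-a}\circ_1\a$). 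The anti-compatibility of the linear orders in $\LGtree$ — which is exactly the property that was ``key'' in the Remark after Lemma \ref{twistedlem} — forces these two repositionings to yield identical gap-labelled arc graphs: the first band at boundary $0$ is glued to the last band at boundary $1$ and so on, so a shift of the boundary-$1$ window by the bands $e_1,\dots,e_j$ is the same datum as a shift of the boundary-$0$ window past the matching bands in reverse, together with the matching gap widths.

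Thus I would carry out the steps in this order: (1) write $\t$ in the normal form $(T_a, \gap_\t)$ allowed in $\Fat\Gtree_0(1)$; (2) compute $\a\circ_1\t$ by the thickened gluing, recording the resulting weights, gap function, and position of the marked point on boundary $1$; (3) compute $\t\circ_1\a$ likewise, recording the position of the marked point on boundary $0$; (4) use the $\LGtree$ anti-compatibility to identify the two resulting generalized weighted arc graphs on $F_{g,2}$, checking that the gap redistributions in (b) of the gluing recipe agree on the nose; (5) conclude equality in $\Fat\Gtree_1(1) \subset \Fat\Gtree(1)$. I expect step (4) — matching the gap-labelling functions exactly, including the bookkeeping of which gaps at boundary $0$ absorb the widths of erased leaves on each side — to be the main obstacle; everything else is the kind of ``straightforward but tedious check'' already invoked in the preceding propositions, and indeed this lemma is the thickened analogue of the $\Cacti$-style bi-crossed product computations of \cite{cact} underlying Proposition \ref{biprop}, with the gap data being the only genuinely new ingredient.
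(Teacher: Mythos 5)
Your proposal is correct and follows essentially the same route as the paper: the key point in both is that for elements of $\Fat\LGtree(1)$ all arcs run from boundary $0$ to boundary $1$ with anti-compatible orders, giving an identification of the two windows along the foliation, so that the annular twist-with-gaps element $\t$ can be slid from one boundary to the other (equivalently, the marked point is transported along the leaves). The only difference is cosmetic: where you propose to track the gap-labelling function directly through both composites, the paper first absorbs each $\gap(e)$ (and $\gap(*)$) into the widths of the adjacent bands, performs the gap-free argument, and then recreates the gaps by deleting the corresponding leaves at the end.
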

\begin{proof}
For this we first thicken the edges $e$ by adding the weight $\gap(e)$
 and we also add $\gap(*)$ to the first edge. We think of the new leaves
in the bands as ending on the respective gaps.
Notice that since we are in $\Fat\LGtree(1)$
the order of the edges is preserved and all edges run from $0$ to $1$. Hence
we have a homeomorphism between the two windows of the surface.
 Now we cut off a cylinder
on the boundary $1$.
We fix the the new marked point to be the translate
of the base point along the foliation of the old boundary $0$. After
cutting off the cylinder, we create the gaps by deleting the
leaves which used to end on the gaps, before thickening them.
\end{proof}
\begin{lem}
\label{fatlemtwo}
Any element $\a$ in $\Fat\Gtree(n)$ can be written as
\begin{equation}
\label{fatdecompeq} \a=T_{a_0}\circ_1 \G\circ_1 T_{a_1} \circ_1\dots \circ_1
\G \circ_1 T_{a_k}\circ_1 \H_{b_1}\circ_1 T_{a_{k+1}} \dots\circ_1
\H_{b_l}\circ_1 T_{a_{k+l}} \circ_1\t\circ_1 \a'
\end{equation}
with  $\a'\in \Gtree(n)$ quasi--filling and not twisted at $0$,
$\tau\in \Fat\Gtree_0(1)$.
\end{lem}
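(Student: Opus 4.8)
The plan is to imitate the non--thickened decomposition~(\ref{decompeq}) (together with Lemma~\ref{decomplemma1}, which reduces the many--input case to $\Gtree(1)$) and then to collect all the gap data into one extra genus--zero factor glued in at the innermost slot.

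First I would slide down and cut off the handles exactly as in the proofs of Lemma~\ref{decomplemma1} and of~(\ref{decompeq}). Since all complementary regions border the boundary $0$ and the gaps all live in the window at the boundary $0$, the separating curves producing the standard genus--one pieces can be chosen tightly, so that each handle piece is a standard element $\G$ or $\H_{b_i}$ carrying the zero gap function, while the interspersed twists $T_{a_j}\in\Arc_0(1)$ carry no gaps either. Because the thickened gluing of~\S\ref{thickendpar} with a zero--gap factor on the outside merely transmits bands and gaps (a separatrix only recuts bands into adjacent pieces, it creates no new gaps), recomposing these factors gives
\[
(\a,\gap)=T_{a_0}\circ_1 \G\circ_1 \dots \circ_1 \H_{b_l}\circ_1 T_{a_{k+l}}\circ_1(\a'',\gap''),
\]
where $k$ is the sum of the genus defects and $l$ the sum of the boundary defects of $\a$, and $(\a'',\gap'')\in\Fat\Gtree(n)$ is quasi--filling: by Lemma~\ref{eulerlem} the Euler defect can only drop under gluing, and the removed factors carry all of it, so what remains has defect $0$.

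Second I would factor the quasi--filling piece. Let $\a'\in\Gtree(n)$ be the consolidation of $(\a'',\gap'')$ under the retraction of Proposition~\ref{retractprop}, i.e.\ rescale $\gap''$ to $0$ and replace each family of parallel bands by a single band with the summed weight; this does not change the complementary regions, so $\a'$ is again quasi--filling. All gaps of $(\a'',\gap'')$ and all splittings of its bands into parallel families occur in the window at the boundary $0$, and splitting the boundary--$0$ end of a band subdivides the whole band; hence $(\a'',\gap'')$ is obtained from $\a'$ by reprocessing a collar of the boundary $0$, so there is an annular element $\t\in\Fat\Gtree_0(1)$, carrying the gaps and recording the parallel splittings, with $(\a'',\gap'')=\t\circ_1\a'$. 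By Lemma~\ref{cuttinglem} the marked point on the cutting curve between $\t$ and $\a'$ --- which disappears under the gluing --- may be chosen so that $\a'$ is not twisted at $0$. Substituting this factorization into the displayed formula yields~(\ref{fatdecompeq}).

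The main obstacle is the gap bookkeeping in the first step: one must check that the cutting curves can always be arranged so that no nonzero gap sits on an arc that ends up inside a standard handle factor, and that the thickened gluing of the zero--gap pieces reproduces $(\a,\gap)$ on the nose rather than merely up to homotopy. This is the same routine but tedious verification that underlies the proof that the $\Fat\calD\Gtree(n)$ form an operad, and it rests on the fact that parallel arcs bound polygonal complementary regions, so that the gap structure is orthogonal to the genus and boundary defects that the decomposition~(\ref{decompeq}) manipulates.
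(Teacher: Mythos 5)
Your decomposition is correct, but you run the two steps in the opposite order from the paper, and that reordering is what creates the ``main obstacle'' you identify at the end. The paper's proof does the gap bookkeeping first and trivially: since all gaps live in the window at the boundary $0$, one cuts off a small annulus around that boundary to write $\a=\t\circ_1\a'$ with $\t\in\Fat\Gtree_0(1)$ absorbing all gaps and parallel--band splittings and $\a'\in\Gtree(n)$ an honest un--thickened element; then (\ref{decompeq}) applies to $\a'$ verbatim, and Lemma~\ref{fatlemone} is used to slide $\t$ inward past the $T$, $\G$ and $\H$ factors to the innermost slot. You instead peel off the handles first and then must argue that the handle factors can be taken gap--free with the gaps concentrated on the innermost quasi--filling piece; the verification you sketch --- that zero--gap annular factors transmit the inner gaps faithfully through their bands to the outer boundary $0$ --- is precisely the content of Lemma~\ref{fatlemone}, which you re--derive inline rather than cite. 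Both routes work; the paper's ordering buys you a one--line reduction to the already--established Lemma~\ref{decomplemma1}/(\ref{decompeq}) plus one application of Lemma~\ref{fatlemone}, whereas yours requires carrying the gap data through the handle--cutting argument. If you keep your order, you should at least invoke Lemma~\ref{fatlemone} explicitly for the commutation, since without it the claim that the composite of zero--gap outer factors with a gap--carrying inner factor reproduces $(\a,\gap)$ on the nose is exactly the statement that needs proof.
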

\begin{proof}
Any element $\a$ of $\Fat\Gtree(n)$ can be  decomposed as
$\a=\tau\circ_1\a'$ with $\tau\in \Fat\Gtree_0(1)$ and $\a'\in
\Gtree(n)$ by simply cutting off a small annulus around the boundary
$0$. Now we can decompose $\a'$ according to equation
(\ref{decompeq}) and using Lemma \ref{fatlemone} obtain the
decomposition above.
\end{proof}

\begin{prop}
The image of an element $\a\in \Fat\Gtree$ is given as
follows. Let $\a$ be decomposed as in equation (\ref{fatdecompeq})
then, $[\a]=[T_b\circ_1\t\circ_1 \a']\in \Sttree$ with $b$ the sum of all the
twists.

If $\a=\a_1\circ_1 \t\circ \a'$ and $\b=\b_1\circ \t' \circ \b'$ as in Lemma
\ref{fatdecompeq} where we aggregated all the $T,G,H$ terms into
$\a_1$ and $\b_1$ then $\a\circ_i \b = \g_1 \circ_1\t'' \circ_1 \g'$ in the
same notation with $\t''\circ_1 \g'= \t\circ_1 \a'\circ_i  T_b\circ_i\t' \circ_i \b'$ where $b$ is
the sum of all the twists in $\b_1$.
\end{prop}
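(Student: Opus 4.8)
The plan is to establish the two statements as the thickened counterparts of Proposition~\ref{stimprop} (for the first) and of the last part of Proposition~\ref{structureprop} (for the second); in both cases the argument is the one already used in the unthickened setting, the only new point being that the gap elements $\t,\t'$ must be tracked. For the first statement I would take $\a$ in the form~(\ref{fatdecompeq}) and rerun the computation in the proof of Proposition~\ref{stimprop} \emph{verbatim}, with the sole change that the trailing factor is now $T_{a_{k+l}}\circ_1\t\circ_1\a'$ instead of $T_{a_{k+l}}\circ_1\a'$. Every manipulation there --- inserting cancelling pairs $T_{-a}\circ_1 T_a={\bf 1}$ and using $T_a\circ_1 T_b=T_{a+b}$ --- is carried out in the slot~$1$ entry, i.e.\ strictly ``above'' the subelement $\t\circ_1\a'$ in the composition tree, so that $\t\circ_1\a'$ is simply carried along. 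This rewrites $\a$ as
\[
\stg(\bar a_0)\circ\dots\circ\stg(\bar a_{k-1})\circ\sth(\bar a_k,b_1)\circ\dots\circ\sth(\bar a_{k+l-1},b_l)\bigl(T_{\bar a}\circ_1\t\circ_1\a'\bigr),
\]
with $\bar a_j=\sum_{i=0}^{j}a_i$ and $\bar a=\bar a_{k+l}=b$; since in the colimit each element is identified with its images under the maps $\stg$ and $\sth$, this gives $[\a]=[T_b\circ_1\t\circ_1\a']$.

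For the second statement I would first strip off $\a_1$ and $\t$ using operad associativity together with the fact that $\a_1,\t,\b_1,\t'$ all have arity one, obtaining
\[
\a\circ_i\b=\a_1\circ_1\t\circ_1(\a'\circ_i\b)=\a_1\circ_1\t\circ_1\bigl((\a'\circ_i\b_1)\circ_i\t'\circ_i\b'\bigr).
\]
Next I would apply Proposition~\ref{structureprop} to $\a'\circ_i\b_1$: expanding $\b_1$ in its $T$-$\G$-$\H$ form, the identities $\a'\circ_i\G=\G\circ_1\a'$ and $\a'\circ_i\H_a=\H_{d'}\circ_1\a'$ push every unstable bubble of $\b_1$ to the boundary--$0$ side of $\a'$, while repeated use of $T_c\circ_i T_{c'}=T_{c+c'}$ sweeps all the twists of $\b_1$ past those bubbles and accumulates them into a single $T_b$ inserted at boundary $i$, with $b$ the sum of the twists of $\b_1$. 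Hence $\a'\circ_i\b_1=\tilde\b_1\circ_1(\a'\circ_i T_b)$ where $\tilde\b_1$ is a product of $\G$'s and $\H$'s (with adjusted $\H$-parameters), and, substituting back and reassociating,
\[
\a\circ_i\b=\a_1\circ_1\bigl(\t\circ_1\tilde\b_1\bigr)\circ_1\bigl(\a'\circ_i T_b\circ_i\t'\circ_i\b'\bigr).
\]

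The remaining step, which I expect to be the only delicate one, is to move the gap element $\t$ to the \emph{right} past the unstable string $\tilde\b_1$, so that the $T$-$\G$-$\H$ data aggregate into $\g_1:=\a_1\circ_1\tilde\b_1$ at boundary $0$ and $\t$ ends up adjacent to the quasi--filling data. Lemma~\ref{fatlemone} already gives $\t\circ_1 T_c=T_c\circ_1\t$; what must be added is $\t\circ_1\G=\G\circ_1\t$ and $\t\circ_1\H_a=\H_a\circ_1\t$ --- the thickened version of the relations $\a\circ_i\G=\G\circ_1\a$, $\a\circ_i\H_a=\H_b\circ_1\a$ of Proposition~\ref{structureprop} --- since Lemma~\ref{fatlemone} as stated only handles $\Fat\LGtree(1)$, which contains no handles. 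This extension follows from the very same geometric move, ``pull the handle down and cut it off'', once one observes that the supporting isotopy of the handle is confined to the interior of a complementary region and is therefore disjoint from the boundary and from the band/gap data, so that the gap labelling is transported unchanged; $\G$ and $\H$ are genuinely local modifications of complementary regions and do not interact with gaps. Granting this, $\t\circ_1\tilde\b_1=\tilde\b_1\circ_1\t$ and therefore $\a\circ_i\b=(\a_1\circ_1\tilde\b_1)\circ_1\bigl(\t\circ_1\a'\circ_i T_b\circ_i\t'\circ_i\b'\bigr)$. The first factor is a $T$-$\G$-$\H$ string, hence the new $\g_1$; the second factor is an element of $\Fat\Gtree$ which, by the first step in the proof of Lemma~\ref{fatlemtwo} (cutting off a small annulus around boundary $0$, the Euler defect and untwisting bookkeeping being exactly as in the proof of Lemma~\ref{finerlem}), is of the form $\t''\circ_1\g'$. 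This yields precisely the asserted identity $\t''\circ_1\g'=\t\circ_1\a'\circ_i T_b\circ_i\t'\circ_i\b'$ and the claimed standard form for $\a\circ_i\b$.
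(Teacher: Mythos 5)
Your proposal is correct and follows essentially the same route as the paper, whose proof consists of the single remark that, in view of Lemmata \ref{fatlemone} and \ref{fatlemtwo}, the two parts are analogous to Propositions \ref{stimprop} and \ref{structureprop} respectively. Your observation that Lemma \ref{fatlemone} as stated only covers $\Fat\LGtree(1)$ and must be supplemented by the commutation of the gap element with $\G$ and $\H_a$ (which you justify by the same ``pull down the handle'' move, noting it does not disturb the gap data) is exactly the detail the paper's terse ``analogous'' elides, and your treatment of it is sound.
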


\begin{proof}
In view of Lemma \ref{fatlemone} and Lemma \ref{fatlemtwo} the first
part is analogous to Proposition \ref{stimprop} and the second part
is analogous to Proposition
 \ref{structureprop}.
\end{proof}

We define  $\Fat\LGtree$ and  $\Fat\LGtree'$ analogously to their
non--thickened counterparts. In view of the Proposition we obtain:
\begin{thm}
The operad structure of $\Fat\Gtree$ descends to $\Fat\St\Gtree$
and $\Fat\St\LGtree$ and $\Fat\St\LGtree'$ are suboperads.
\end{thm}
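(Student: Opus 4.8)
The plan is to follow the exact template already established for the non-thickened case, but keeping track of the gap data. The key structural input is the preceding Proposition, which computes the image of an element of $\Fat\Gtree$ under stabilization as $[\a] = [T_b \circ_1 \t \circ_1 \a']$ and gives a formula for the composite in this normal form, showing that the composite of two stabilized elements depends only on their stabilized images. This is precisely the statement needed to see that the operad structure descends: if $[\a] = [\tilde\a]$ and $[\b] = [\tilde\b]$ in $\Fat\St\Gtree$, then by the Proposition both $\a \circ_i \b$ and $\tilde\a \circ_i \tilde\b$ have normal forms whose only surviving data after stabilization is the aggregate twist $b$, the $\t$-part, and the quasi-filling part, and these agree. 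So the first step is simply to invoke the Proposition to conclude well-definedness of $\circ_i$ on $\Fat\St\Gtree(n)$; associativity and $\Sn$-equivariance are then inherited from $\Fat\Gtree$, which is already an operad by an earlier Proposition.

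For the suboperad claims, I would argue exactly as in the non-thickened theorem: stabilization via $\stg(a)$ and $\sth(b,c)$ inserts a composition of a $\G$ or $\H_a$ with compensating twists $T_a$ and $T_{-a}$, so the net twist introduced is zero. Hence if $\a$ lies in $\Fat\LGtree$ (respectively $\Fat\LGtree'$), meaning it is possibly twisted at $0$ (respectively untwisted), then its stabilized image still has this property, because the only change is the absorption of $\G,\H$-factors and the aggregation of twists into a single $T_b$ at boundary $0$, which is harmless for the $\LGtree$ condition and, since the net added twist vanishes, also harmless for the $\LGtree'$ condition. One must also check that $\Fat\LGtree$ and $\Fat\LGtree'$ are closed under $\circ_i$ inside $\Fat\Gtree$ before passing to the colimit; this is the thickened analogue of the Corollary following the twisting Lemma for $\Gtree$, and follows from the same reasoning — all arcs run to $0$, so parallelism after gluing forces parallelism before, exactly as in that proof, with the gaps playing no role in the order-compatibility argument.

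The main obstacle, to the extent there is one, is bookkeeping: one must be confident that the gap-labelling function $\gap$ behaves compatibly with the colimit maps $\stg(a), \sth(b,c)$ and does not obstruct the descent. This is handled by Lemma \ref{fatlemone} and Lemma \ref{fatlemtwo}, which show the $\t \in \Fat\Gtree_0(1)$ factor can be slid past the quasi-filling part and that every element has the normal form (\ref{fatdecompeq}); since the stabilization maps only act on the $\G,\H$-and-twist block and leave $\t$ and $\a'$ untouched, the gap data is simply carried along. So the proof is essentially: cite the Proposition for descent, cite the zero-net-twist observation for the suboperads, and note that the thickened structure lemmata guarantee everything goes through verbatim.

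\begin{proof}
By the previous Proposition, the composite $\a\circ_i\b$ in $\Fat\Gtree$ has a normal form (\ref{fatdecompeq}) whose image under stabilization is $[\g_1\circ_1\t''\circ_1\g']$, and this image is determined entirely by the aggregate twist, the $\Fat\Gtree_0(1)$--factor, and the quasi--filling part of $\a$ and $\b$ --- equivalently, by the stabilized images $[\a],[\b]$. Hence $\circ_i$ is well defined on $\Fat\St\Gtree(n)$. Associativity and $\Sn$--equivariance are inherited from the operad $\Fat\Gtree$. Thus the operad structure descends to $\Fat\St\Gtree$.

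For the suboperads, first note that $\Fat\LGtree$ and $\Fat\LGtree'$ are closed under $\circ_i$ in $\Fat\Gtree$: since all arcs run to $0$, the argument of the twisting Lemma for $\Gtree$ applies verbatim --- two arcs becoming parallel after gluing (upon moving endpoints across a marked point on a boundary $\neq 0$) would already be parallel from boundary $0$ up to the glued curve, forcing a contradiction; the gap data does not affect this order--compatibility argument. Finally, each stabilization map $\stg(a)$, $\sth(b,c)$ inserts a $\G$ or $\H_a$ conjugated by $T_a$ and $T_{-a}$, so it introduces a net twist of zero and leaves the $\Fat\Gtree_0(1)$-- and quasi--filling parts untouched. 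Therefore it preserves the property of being possibly twisted at $0$ (the $\Fat\LGtree$ condition) and, net twist being zero, also the property of being untwisted (the $\Fat\LGtree'$ condition). Passing to the colimit, $\Fat\St\LGtree$ and $\Fat\St\LGtree'$ are suboperads of $\Fat\St\Gtree$.
\end{proof}
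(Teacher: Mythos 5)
Your proposal is correct and follows exactly the route the paper intends: the theorem is stated "in view of the Proposition" (the thickened analogues of Propositions \ref{stimprop} and \ref{structureprop}) with the proof left as immediate, and your argument — well-definedness from the normal-form/composition Proposition, plus the net-zero-twist observation for the suboperad claims — is precisely the paper's proof of the non-thickened version carried over via Lemmas \ref{fatlemone} and \ref{fatlemtwo}. Your explicit check that $\Fat\LGtree$ and $\Fat\LGtree'$ are closed under $\circ_i$ before stabilizing is a detail the paper leaves implicit, but it is the right one and your reasoning for it is sound.
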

\qed

\section{The $E_k$ and $E_{\infty}$ operad structures}

\subsection{Berger's Complete graphs operad}
Since we will use the Fiedorowicz--Berger criterion for $E_{\infty}$
and $E_n$ operads \cite{Berger,Fiedobscure}, we quickly recall the
necessary definitions for  the complete graph poset $\K$. We set
$\K_p:=\N^{p\choose 2}\times {\mathbb S}_p$ and think of an element
$(\mu,\sigma)$ as a collection of natural numbers $(\mu_{ij})_{1\leq
i<j<\leq p}$ and a permutation $\sigma$ in the symmetric group on
$p$ letters ${\mathbb S}_p$.

The sets $\K_{p}$ form an operad under the compositions
\begin{equation}
\Gamma((\mu,\sigma);(\mu_1,\sigma_1),\dots (\mu_p,\sigma_p))=(\mu(\mu_1,\dots,\mu_p),\sigma(\sigma_1,\dotsm,\sigma_p))
\end{equation}
where $\sigma(\sigma_1,\dotsm,\sigma_p)$ is the usual block
permutation and $(\mu(\mu_1,\dots,\mu_p))$ is defined as follows: if
$i,j$ are in the same block say $r$ then $(\mu_r)_{ij}$ is kept, if
they belong to different blocks,  say $r$ and $s$ then one takes
$\mu_{rs}$ keeping in mind the usual renumbering; see \cite{Berger}
for more details. The neutral element is $*=(\emptyset,\emptyset)$
and the map $\phi^*_{ij}$ in this case is given by
$\phi^*_{ij}(\mu,\sigma)=(\phi^*_{ij}(\mu),\phi^*_{ij}(\sigma))$ and
$\phi^*_{ij}(\sigma)$ is the restriction of the permutation to $i$
and $j$ where $i$ is mapped to $1$ and $j$ to $2$ and

\begin{equation}
\phi^*_{ij}(\mu)=\begin{cases} \mu_{ij}& \text{ if } i <j\\
\mu_{ji} &\text{ if } j<i\\
       \end{cases}
\end{equation}

Each set $\K_{p}$  is a poset with the order given by
\begin{equation}
(\mu,\sigma)\leq (\nu,\tau) \Leftrightarrow \forall i<j \text{
either } \phi_{ij}^*(\mu,\sigma)=\phi_{ij}^* (\nu,\tau) \text { or }
\mu_{ij} <\nu_{ij}
\end{equation}

\subsection{Identifying the $E_n$ and $E_{\infty}$ operads}

We recall the following definition from \cite{Berger}:
\begin{df} Let $A$ be a partially ordered set and $X$ a topological space. A collection
 $(c_{\a})_{\a\in A}$ of closed contractible subspaces (so-called ``cells'') of $X$ are called a {\em cellular $A$ decomposition} if the following three conditions hold:
\begin{enumerate}
\item $c_{\a}\subseteq c_{\b} \Leftrightarrow \a\leq \b$
\item the cell inclusions are cofibrations
\item $X=\varinjlim x_{\a}$, so $X$
equals the union of the cells and the weak topology with respect to its cells.
\end{enumerate}

\end{df}

\begin{lem}
 $\St\LGtree(2)$ has a cellular  ${\mathcal K}_2$ decomposition.
\end{lem}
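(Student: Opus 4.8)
The plan is to extract the Berger cells directly from the combinatorial description of $\St\LGtree(2)$ and then verify the three axioms against the poset $\K_2=\N\times\mathbb{S}_2$ (in which, for $p=2$, $(j,\sigma)\le(j',\tau)$ holds iff $(j,\sigma)=(j',\tau)$ or $j<j'$). I would organize everything around a $\K_2$--valued invariant of points of the space. First, by Corollary \ref{quasicor} we may work with $\LGtree_{\#}(2)$, and by construction of the colimit this is the increasing union of its genus--$g$ strata, the maps $\stg,\sth$ realizing the genus--$g$ stratum inside the (limit--of--weights) boundary of the genus--$(g+1)$ stratum. By Proposition \ref{stimprop} every class is represented as $[T_b\circ_1\a']$ with $\a'\in\LGtree'$ untwisted quasi--filling on some $F_{g,3}$ and $b\in S^1$, and by Proposition \ref{structureprop} together with Lemma \ref{twistedlem} (applied as for $\CGtree$) this representative is canonical; in particular the genus of $\a'$ and the twist $b$ are honest functions on $\St\LGtree(2)$, because the generators $\stg,\sth$ of the colimit leave the quasi--filling part and the total twist unchanged. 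From this I would read off the invariant: $\sigma(\cdot)\in\mathbb{S}_2$ records whether the first arc in the linear order at boundary $0$ of $\a'$ runs to boundary $1$ ($\sigma=\mathrm{id}$) or to boundary $2$ ($\sigma=(1\,2)$), and $\mu(\cdot)\in\N$ is a ``genus content'', essentially $2g$ refined by a $\{0,1\}$--bit detecting position relative to the untwisted suboperad $\St\LGtree'$ (this is precisely the $E_{2k}$ versus $E_{2k+1}$ distinction of the introduction, so that $\{\mu\le j\}$ will model $E_{j+1}(2)\simeq S^{j}$ and the full space $S^{\infty}$).

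For $(j,\sigma)\in\K_2$ I would then set $c_{(j,\sigma)}$ to be the closure in $\St\LGtree(2)$ of the locus $\{\mu<j\}\cup\{\mu=j,\ \sigma(\cdot)=\sigma\}$, possibly after a small adjustment of exactly which arc--graph strata at level $\mu=j$ of each type are included (this choice is where the real care lies). With a definition of this shape the chain containments $c_{(j,\sigma)}\subseteq c_{(j',\sigma)}$ for $j\le j'$, and $c_{(j,\sigma)}\subseteq c_{(j+1,\tau)}$ for every $\tau$, are essentially tautological, and they match the order of $\K_2$. What must be checked substantively is contractibility and the failure of all the forbidden containments.

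Contractibility of each $c_{(j,\sigma)}$ I would prove by an explicit deformation retraction: within a fixed genus the barycentric weight coordinates on the cell of each arc graph give open simplices, and the twist $T_b$ at boundary $0$ together with the global trivialization of the $(S^1)$--bundle in Lemma \ref{twistedlem} lets one straighten twists and collapse the genus--$g$ type--$\sigma$ stratum onto the next lower genus; iterating down the genus telescope one arrives at the genus--$0$ type--$\sigma$ stratum, which is a closed arc inside $\Lintree(2)\simeq S^1$ and hence contractible. The remaining two axioms are routine: the cell inclusions are cofibrations because everything sits as subcomplexes of the ambient CW complexes $\A_{g,3}$, and $\St\LGtree(2)=\varinjlim c_{(j,\sigma)}$ because every element lies on a surface of finite genus and carries a well--defined leading arc type, so the $c_{(j,\sigma)}$ cover the space and it carries the colimit topology. (As a consistency check this forces $\St\LGtree(2)\simeq|\K_2|\simeq S^{\infty}$, as it must for an $E_{\infty}$ term.)

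The main obstacle is the order axiom $c_{\alpha}\subseteq c_{\beta}\Leftrightarrow\alpha\le\beta$, and more precisely the interlocking of the leading--type strata with the genus filtration. On the ``$\Leftarrow$'' side the point is the cross--type case $\sigma\ne\tau,\ j<j'$: one must see that a genus--$(\le j)$ quasi--filling arc graph of leading type $\sigma$ genuinely appears on the boundary of a genus--$j'$ quasi--filling arc graph of the \emph{other} leading type $\tau$ — concretely, that after applying $\stg$ or $\sth$ once (which in the colimit does not move the point) one can adjoin arcs, via the extended gluing of \S\ref{extendedpar} and the separatrix--cutting combinatorics, to build a quasi--filling arc graph of prescribed leading type with the original element in its closure. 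This is exactly the geometric mechanism by which stabilization upgrades the naive genus filtration, whose strata are spheres $\Lintree(2)\simeq S^{1}\subset\cdots$, into the contractible $E_{\infty}$ object; it has to be verified from the gluing and cutting lemmas (Lemma \ref{cuttinglem}, Proposition \ref{structureprop}) and is where I expect the bulk of the work. The ``$\Rightarrow$'' side — that $c_{(j',\sigma)}\not\subseteq c_{(j,\tau)}$ whenever $j'>j$, and that $c_{(j,\mathrm{id})}$ and $c_{(j,(1\,2))}$ are incomparable — is the comparatively routine bookkeeping that the invariants $\mu$ and $\sigma$ separate the cells, once one checks how $\mu$ can change under passing to faces and under the colimit identifications; isolating the correct level--$j$ strata so that this separation is clean is the reason the cell definition above may need a minor adjustment.
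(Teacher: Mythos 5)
Your overall strategy is the right one in spirit (reduce to quasi--filling elements via Corollary \ref{quasicor}, index cells by a genus--type pair matching $\K_2$, contract by collapsing weights, check the poset axioms), but there is a genuine gap at the very center: you never identify what the cells actually are. The whole decomposition rests on an explicit combinatorial classification, which the paper states and which your argument silently needs: a quasi--filling arc graph in $\LGtree(2)$ is, for each number of arcs $i+2$, \emph{exactly one} of the two alternating patterns $\cup_i$ or $\t_{12}\cup_i$ (arcs alternating between boundaries $1$ and $2$ in the linear order at $0$, the type $\sigma$ recorded by the first arc). With this in hand the cell $c_{(i,\sigma)}$ is simply the closure of the single stratum $\cup_i$ (resp.\ $\t_{12}\cup_i$), which is $\Delta^{[(i+1)/2]}\times\Delta^{[i/2]}\cong B^i$ up to the $\R_{>0}$ factors, and everything else is forced. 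Your definition of $c_{(j,\sigma)}$ as the closure of $\{\mu<j\}\cup\{\mu=j,\ \sigma(\cdot)=\sigma\}$, ``possibly after a small adjustment of exactly which arc--graph strata at level $\mu=j$ are included,'' hides precisely this classification: without it you cannot know that the level--$j$ locus of a fixed type is a single contractible stratum, and the ``small adjustment'' is in fact the entire content of the lemma. Your invariant $\mu$ (``essentially $2g$ refined by a bit'') is the right shadow of the correct index $i=\#\mathrm{arcs}-2$ (indeed $g=[i/2]$), but it is only an invariant of the answer, not a construction of the cells.

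A second, related misdiagnosis: you locate ``the bulk of the work'' in the cross--type containment $c_{(j,\sigma)}\subseteq c_{(j',\tau)}$ for $j<j'$, $\sigma\neq\tau$, and propose to establish it by operadic gluing (applying $\stg$, $\sth$ and the extended gluing to adjoin arcs). That is not the mechanism. The containments come from the \emph{face structure} of the cells: sending the weight of an arc of $\cup_{j'}$ to zero deletes it, and because of the alternating pattern this drops the dimension by two (two arcs become parallel and are consolidated after stabilizing) unless one deletes the first or last arc, in which case one lands on $\cup_{j'-1}$ or $\t_{12}\cup_{j'-1}$ respectively. Iterating, the boundary of each $j'$--ball is $S^{j'-1}$ decomposed into the two $(j'-1)$--hemispheres of both types, which gives all containments $c_\a\subseteq c_\b$ for $\a\le\b$ at once and simultaneously rules out the forbidden ones (the two cells at a fixed level are distinct top strata, neither in the closure of the other). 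No gluing or cutting lemma is needed for this step; what is needed is the boundary computation for the specific graphs $\cup_i$, which again presupposes the classification you omitted. Your contractibility argument via a genus--telescoping retraction would presumably work but is heavier than necessary: once a cell is a single stratum, one contracts it by sending all weights except the first arc at each boundary to zero.
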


 We use the term cellular here in the sense of Berger.
Below, we also give a CW model where the ``cells'' are actual cells. In
fact  $\St\LGtree(2)$ is homeomorphic to $S^{\infty}\times {\mathbb R}_{>0}$
with the ${\mathcal K}_2$ being the hemispherical decomposition of
Fiedorowicz and Berger \cite{Fiedobscure, Berger} on the factor
$S^{\infty}$ and trivial in the $\Rp$ factors. The cellular model
will have cells exactly corresponding to the hemispherical decomposition.

\begin{proof}
By Corollary \ref{quasicor} the elements of $\St\LGtree(2)$ can be
identified with the elements of $\LGtree(2)$  that are
quasi--filling. It is straightforward
 to verify that these
elements are either of one of the two types of Figure \ref{cup2b} or
with their labels $1$ and $2$ interchanged. We will call these
graphs $\cup_i$ and $\t_{12}\cup_i$. This means that the elements
are indexed by the arc graphs which in turn can be indexed by
$\N\times {\mathbb S}_2$. Here in a tuple $(i,\sigma)\in N\times
{\mathbb S}_2 $
 $i$ is the dimension, which
is the total number of arcs  of $\cup_i$ minus two and $\sigma$ is
either $id$ or $\t_{12}$.

The spaces of elements of a fixed graph are contractible. For this
we just decrease all the weights except on each first arc at each
boundary to zero and then  scale the remaining weights to the same
value $\frac{1}{i+2}$. Now, the codimension $1$ boundary strata
 of each cell are given by deleting one arc.
We see from the alternating structure of the arcs that the result
will be of codimension $2$ unless we are deleting the first or last
arc. In all other cases the element becomes unstable and after
removing the degeneracy we are left with two parallel arcs which are
combined into one arc hence decreasing the dimension. Contracting
the first arc or the last arc, we obtain  $\cup_{i-1}$ or
$\t_{12}\cup_{i-1}$.

We can actually say a little more since
$\LGtree(2)=\LGtree^1(2)\times {\mathbb R}_{>0}^2$; see
\S\ref{normalizedpar} below. The subspaces of $\LGtree^1(2)$ indexed by
 $\cup_i$ are just $\Delta^{[(i+1)/2]}\times \Delta^{[i/2]}\sim_{\rm
 homeo}
B^{i}$ and the boundary maps glue the $B^i$ to the $S^{i-1}$
decomposed into  two hemispheres. It is now straightforward
to see that these cell inclusions are cofibrations and the topology
is the weak, induced one.
\end{proof}

\begin{figure}
\epsfxsize = .8\textwidth \epsfbox{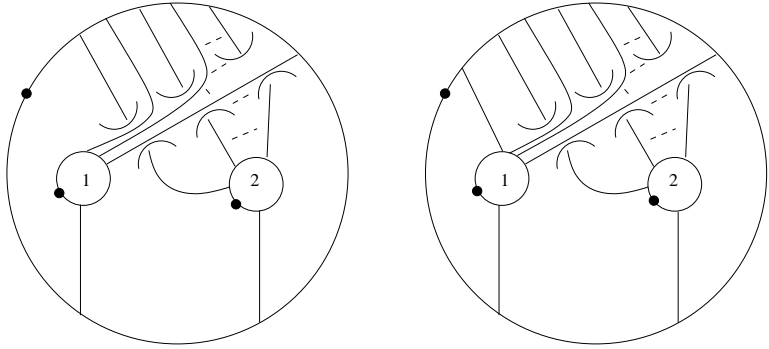}
\caption{\label{cupi} The $\cup_i$ operations for $i$ even and $i$ odd}
\end{figure}

\begin{figure}
\epsfxsize = \textwidth \epsfbox{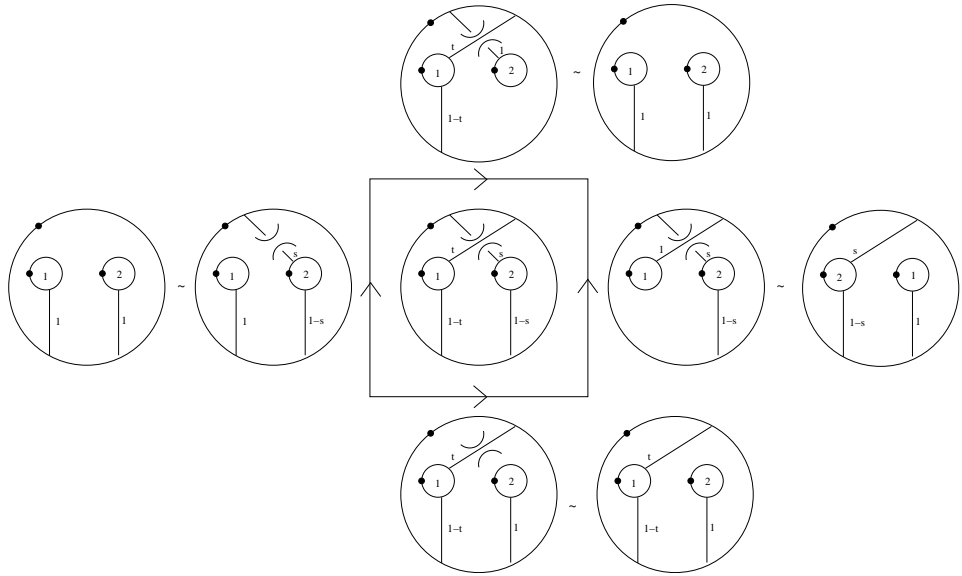}
\caption{\label{cup2} The $\cup_2$ operation and its boundary before
stabilization and stable representatives of the boundary components}
\end{figure}

\begin{figure}
\epsfxsize = .8\textwidth \epsfbox{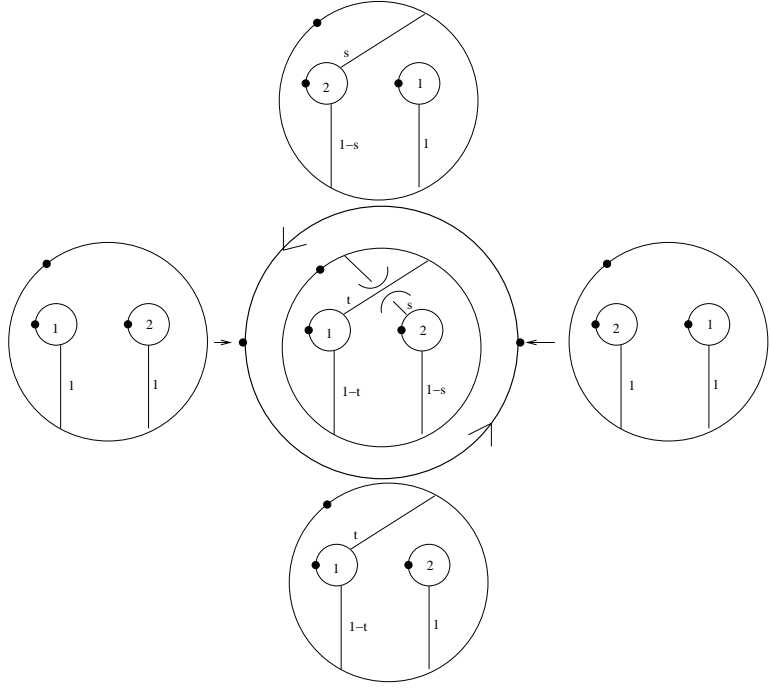}
\caption{\label{cup2b} The $\cup_2$ operation after stabilization}
\end{figure}

\begin{cor}
 $\Fat\St\LGtree(2)$ has a cellular  ${\mathcal K}_2$ decomposition.
\end{cor}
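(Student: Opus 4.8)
The plan is to transport the cellular $\K_2$ decomposition of $\St\LGtree(2)$ furnished by the preceding Lemma across the operadic retraction of Proposition~\ref{retractprop}. First I would observe that the retraction $\Fat\Gtree(n)\to\Gtree(n)$ --- scale the gap function $\gap$ to $0$ and then consolidate the bands coming from parallel edges --- commutes with the stabilization maps $\stg$ and $\sth$, since these insert only the fixed elements $G$, $H_b$ and $T_a$ at the boundary $0$ (with zero gap) and the retraction is operadic; hence it descends to a retraction $r\colon\Fat\St\LGtree(2)\to\St\LGtree(2)$ with section $\iota\colon\a\mapsto(\a,0)$. Moreover, since its defining formula simply scales the gaps to $0$, the map $r$ is realized by a homotopy $H$; one can arrange $H$ so that $r\circ H_t=r$ for all $t$, because rescaling gaps and then consolidating with gaps zero is unaffected by a preliminary rescaling. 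Thus $r$ is a deformation retraction in a way compatible with $r$ itself.

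Next I would define the candidate cells. For each $\alpha=(i,\sigma)\in\N\times{\mathbb S}_2$ indexing a cell $c_\alpha$ of $\St\LGtree(2)$ --- i.e.\ the graph $\cup_i$ or $\t_{12}\cup_i$ --- set $\tilde c_\alpha:=r^{-1}(c_\alpha)$. Since $r\circ H_t=r$, the homotopy $H$ preserves each $\tilde c_\alpha$ and deformation-retracts it onto $\iota(c_\alpha)\subseteq\tilde c_\alpha$, which is homeomorphic to the closed contractible cell $c_\alpha$; hence every $\tilde c_\alpha$ is closed and contractible. Because $r$ is surjective with $r\iota=\mathrm{id}$ we have $r(\tilde c_\alpha)=c_\alpha$, so $\tilde c_\alpha\subseteq\tilde c_\beta\Leftrightarrow c_\alpha\subseteq c_\beta\Leftrightarrow\alpha\leq\beta$ in $\K_2$, which is Berger's first axiom.

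For the remaining two axioms --- the inclusions $\tilde c_\alpha\hookrightarrow\Fat\St\LGtree(2)$ are cofibrations, and the space carries the weak topology with respect to the $\tilde c_\alpha$ --- I would argue from the local structure. On the open part of a fixed arc graph with $m$ edges, after projectivizing, a point of $\Fat\St\LGtree(2)$ is a weight vector in $\R_{>0}^m$ together with a gap vector in $\R_{\geq 0}^{m+1}$ modulo overall scaling, so $\tilde c_\alpha$ is built, stratum by stratum, from $c_\alpha$ by taking a product with a cone of gap coordinates, whose extra faces record pairs of parallel arcs merging. This is exactly the picture one obtains by repeating the explicit computation at the end of the Lemma's proof with $\Fat\LGtree^1(2)$ in place of $\LGtree^1(2)$: the cells are again balls, now of the shape $c_\alpha$ times a gap simplex, glued along the hemisphere decompositions of their boundary spheres, so the inclusions are cofibrations and the topology is the induced weak one.

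The step I expect to be the main obstacle is precisely this last one: making rigorous that cofibrancy and the weak-topology condition survive pulling back along $r$, given that $r$ is not a fibre bundle (the number of gap coordinates jumps with $i$, and the parallel-arc loci make the fibres stratified cones rather than simplices). I would dispose of it either by the cell-by-cell product description above, or, more cheaply, by observing that $r$ is a proper homotopy equivalence which is a homeomorphism over each open cell of $\St\LGtree(2)$ and a stratified cone map over its closure, and that the preimage under such a map of a cellular $A$-decomposition is again a cellular $A$-decomposition.
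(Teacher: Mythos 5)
Your proposal is correct and is essentially the paper's own argument: the paper's entire proof reads ``Straightforward by contracting the gaps,'' and you have simply fleshed out that one line by transporting the $\K_2$ decomposition along the gap-contracting retraction of Proposition~\ref{retractprop} and checking Berger's axioms on the preimage cells.
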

\begin{proof}
Straightforward by contracting the gaps.
\end{proof}

We again repeat definitions of \cite{Berger} extending them slightly to our setting.

\begin{df}
A a topological preoperad ${\mathcal O}$ is called a cellular $E_{\infty}$ preoperad
if the ${\mathbb S}_2$ spaces ${\mathcal O}(2)$ admits a cellular $\K_2$ decomposition
$( \calO^{(\a)}(2))_{\a\in \K_2}$
compatible with the action of ${\mathbb S}_2$, such that
\begin{enumerate}
\item For each $p>0$ and $\a\in \K_p$
$$
\calO(p)^{(\a)}:=\bigcap_{1\leq i<j\leq p}(\phi_{ij}^*)^{-1}(\calO^{(\phi_{ij}^*(\a))}(2))
$$
is contractible, and for each $\a,\b\in \K_p$ with $\a\leq \b$ the
natural inclusion $\calO^{(\a)}(p)\subseteq \calO^{(\b)}(p)$ is a
cofibration.
\item Each ${\mathbb S}_p$ orbit of $\calO(p)$ contains an ordered point, i.e.\
a point $x\in \calO(p)$ whose projections $\phi^*_{ij}(x)$ belong to the interiors
$\check\calO^{(\mu,id)}(2)$ where $\check c_{\a}=c_{\a}\setminus \bigcup_{\b<\a}c_{\b}$.
\end{enumerate}
\end{df}

Introduce the filtration
\begin{equation}
\calO^{(n)}(p)=\bigcup_{\a\in \K_p^{(n)}}\calO^{(\a)}(p)
\end{equation}

\begin{df}
A (weak) unital operad $\calO$ is called a (weak) cellular
$E_{\infty}$ operad if the underlying preoperad is a cellular
$E_{\infty}$  preoperad such that the operadic composition $\Gamma$
satisfies:
\begin{equation}
\Gamma:\calO^{(\mu,\sigma)}(p)\times \calO^{(\mu_1,\sigma_1)}(i_1)\times \dots \times
\calO^{(\mu_p,\sigma_p)}(i_p)\subseteq \calO^{(\mu(\mu_1,\dots\mu_p),\sigma(\sigma_1,\dots \sigma_p)}(\sum_j i_j)
\end{equation}
The suboperads $\calO^{(n)}$ are called (weak) cellular $E_n$ operads.
\end{df}

We will use the weak unital operad $\St\LGtree$ and the unital operad
$\Fat\St\LGtree$.

\begin{prop}
$\St\LGtree$ is a weak cellular $E_{\infty}$ operad and the\\
$\St\LGtree^{(n)}=\{\St\LGtree^{(n)}(p)\}$
are weak cellular $E_n$ operads.

$\Fat\St\LGtree$ is a  cellular $E_{\infty}$ operad and the\\
$\Fat\St\LGtree^{(n)}:=\{\St\LGtree^{(n)}(p)\}$
are  cellular $E_n$ operads.
\end{prop}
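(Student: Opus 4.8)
The plan is to verify the two bullets of the definition of a (weak) cellular $E_{\infty}$ operad directly, using the structural results already established for $\St\LGtree$ and $\Fat\St\LGtree$, and then to read off the $E_n$ statement from the filtration. The one genuinely new input is the cellular $\K_2$ decomposition of $\St\LGtree(2)$, which was supplied by the Lemma above (and its Corollary for $\Fat\St\LGtree(2)$). So the first step is to fix, for $\a=(i,\sigma)\in\K_2$, the closed cell $\St\LGtree^{(\a)}(2)$ to be the closure of the stratum of arc graphs of type $\sigma\!\cdot\!\cup_i$; the Lemma shows these are contractible, their inclusions are cofibrations, the poset order matches $\leq$ on $\K_2$, and the $\Zz$--action permuting the two boundary labels corresponds to the $\Sm[2]$--action on $\K_2$, so compatibility with $\Sm[2]$ holds.

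Next I would establish bullet (1) for general $p$. Here the key observation is that the maps $\phi^*_{ij}\colon\St\LGtree(p)\to\St\LGtree(2)$, defined via the degeneracies (for the weak unital operad $\St\LGtree$) or via gluing in the disc $*$ (for the unital operad $\Fat\St\LGtree$), have a concrete description on arc graphs: $\phi^*_{ij}$ forgets all boundaries except $0$, $i$, $j$, deleting the arcs running to the forgotten boundaries and consolidating the rest. Consequently $\calO^{(\a)}(p)=\bigcap_{i<j}(\phi^*_{ij})^{-1}(\calO^{(\phi^*_{ij}(\a))}(2))$ is exactly the space of quasi--filling elements of $\LGtree(p)$ whose pairwise projections have prescribed combinatorial type; using the product decomposition $\LGtree(p)=\LGtree^1(p)\times\Rp^{\,p}$ and the fact (Corollary \ref{quasicor}) that $\St\LGtree(p)=\LGtree_{\#}(p)$, one sees $\calO^{(\a)}(p)$ is a product of simplices — in particular contractible — with the inclusions for $\a\le\b$ being face inclusions up to the $\Rp$ factors, hence cofibrations. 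For $\Fat\St\LGtree$ the same argument applies after contracting the gaps, exactly as in the Corollary above.

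For bullet (2) I would exhibit an ordered point in each $\Sm[p]$--orbit: take the arc graph all of whose arcs run from boundary $0$ in the order $1,2,\dots,p$ with the compatible linear order, which under each $\phi^*_{ij}$ lands in the interior of a cell $\check\calO^{(\mu,id)}(2)$. Finally, the compatibility of the operadic composition $\Gamma$ with the $\K_p$--grading is the assertion that the combinatorial type of a composite $\Gamma(\a;\a_1,\dots,\a_p)$ is computed from the types of the inputs by precisely the operad structure on $\K$ recalled above; this follows from the gluing description together with Proposition \ref{structureprop} and Proposition \ref{stimprop} (which tell us what stabilization does to composites), and for the thickened version from its analogue proved after Lemma \ref{fatlemtwo}. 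Having checked the two bullets and the $\Gamma$--compatibility, the $E_n$ statements are immediate: the filtration $\St\LGtree^{(n)}(p)=\bigcup_{\a\in\K_p^{(n)}}\St\LGtree^{(\a)}(p)$ is by definition a suboperad (stability under $\Gamma$ follows from the displayed compatibility since $\K^{(n)}$ is a suboperad of $\K$), and it inherits all the cellularity conditions by restriction. The main obstacle I expect is not any single step but the bookkeeping in bullet (1): pinning down that the intersection of the $(\phi^*_{ij})^{-1}$ of the codimension data really is the advertised product of simplices, i.e.\ that the pairwise projection constraints are jointly realizable and cut out nothing more, which is where the explicit alternating structure of arcs in $\LGtree$ (as used in the Lemma for $p=2$) has to be leveraged for all $p$.
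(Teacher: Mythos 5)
Your overall architecture is the same as the paper's: take the $\K_2$ decomposition of $\St\LGtree(2)$ as given, verify contractibility and cofibrancy of the $\calO^{(\a)}(p)$ for general $p$, exhibit ordered points, check compatibility of $\Gamma$ with the $\K$--grading, and handle $\Fat\St\LGtree$ by contracting gaps. The one place you diverge is the contractibility of $\calO^{(\a)}(p)=\bigcap_{i<j}(\phi^*_{ij})^{-1}\bigl(\calO^{(\phi^*_{ij}(\a))}(2)\bigr)$ for $p>2$, and that is where there is a genuine gap. You assert this intersection ``is a product of simplices,'' but for general $\a\in\K_p$ it is a union of many closed strata of $\St\LGtree^1(p)$ (all quasi--filling graphs each of whose pairwise restrictions lies in the prescribed closed cell, i.e.\ is $\leq$ the prescribed element of $\K_2$); already for $p=3$ with all $\mu_{ij}=1$ there are several top--dimensional arc graphs glued along common faces. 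A union of products of simplices is not automatically contractible, and the ``jointly realizable, cut out nothing more'' bookkeeping you flag as the main obstacle would not resolve this. The paper avoids the issue with an explicit deformation retraction that works uniformly in $p$: decrease the weights of all arcs except the first arc at each boundary $i\neq 0$ to zero, then equalize the surviving weights. Decreasing a weight only moves an element into a face of its cell, so each projection $\phi^*_{ij}$ stays in the closed cell $\calO^{(\phi^*_{ij}(\a))}(2)$ throughout the homotopy; hence the homotopy stays in $\calO^{(\a)}(p)$ and contracts it to a single corolla--type point. You should replace the product--of--simplices claim with this retraction.

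Two smaller points. For the ordered points, a single corolla with arcs in the order $1,\dots,p$ only produces an ordered point whose projections lie in the minimal cells; the condition must be met for every ${\mathbb S}_p$--orbit, and the paper achieves this with the iterated compositions $\cup'_n\circ_2\cup'_n\circ_3\cdots\circ_{p-1}\cup'_n$ (weights equalized at each boundary) together with their ${\mathbb S}_p$--translates, which supply ordered points throughout the filtration. For the $\Gamma$--compatibility, your appeal to the gluing description is essentially right, but the clean statement (and the paper's) is simply that gluing in the disc $*$ into all slots except two commutes with operadic composition, so $\phi^*_{ij}$ of a composite is read off from the constituent surfaces exactly as in the composition law of $\K$; Propositions \ref{structureprop} and \ref{stimprop} are not needed beyond the already established fact that stabilization respects composition.
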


\begin{proof}
We have already shown that $\St\LGtree(2)$ admits a cellular $\K_{2}$
decomposition. We will now verify the rest of the conditions.

$\St\LGtree_p^{(\a)}(n)$ is contractible. This is analogous to the
case with $n=2$, we can decrease the weights of the arcs to zero of
all but the first arc in a given boundary component, while at the
same time scaling the weights of the first arcs to the same value.
The fact that the ``cell'' inclusions are cofibrations is again
analogous to the case $n=2$.

The operad multiplication preserves the cellular structure. We see
that gluing in arc families and then discs everywhere, there are
only two situations that can arise. Either the two boundaries were
in the same family  or in two different ones. In both cases we see
that the result of gluing in discs is the same whether we do it
before or after gluing. In the first case this means that we only
look at the surface where both boundaries that are the pre-images of
$i,j$ lie, and in the second case we look at the surface into which
we glue and only keep the boundaries $r$ and $s$ into which the two
surfaces are glued. This is exactly how the composition in $\K$ was
defined.

There is an ordered point in each cell. These points are given by
the iterated gluings $\cup'_n \circ_2 \cup'_n \circ_3  \dots
\circ_{p-1}\cup'_n$ and their images under $\sigma\in \calS_p$,
where $\cup'_n$ is the element in $\cup_n$ whose weights on the arcs
at each of the boundaries are all equal, i.e.
$\frac{1}{[(n+1)/2]+1}$ at the boundary 1 and $\frac{1}{[n/2]+1}$ at
the boundary 2.

The claims about $\Fat\St\LGtree$ as cellular
$E_{\infty}$ preoperad again follow by contracting
$\Fat\St\LGtree(n)$ to $\St\LGtree(n)$ by contracting the gaps.
The cellular $E_{\infty}$ operad structure follows from the above
argument, since the gaps can be ignored for the filtrations.
\end{proof}

\begin{thm}
The operads $\Fat\St\LGtree_k$ are $E_k$ operads and the operad
$\Fat\St\LGtree$ is
an $E_{\infty}$ operad.
\end{thm}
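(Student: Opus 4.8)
The plan is to invoke the Berger--Fiedorowicz recognition theorem \cite{Berger,Fiedobscure} directly. The preceding Proposition establishes that $\Fat\St\LGtree$ is a cellular $E_\infty$ operad in the sense of Berger, with its filtration pieces $\Fat\St\LGtree^{(n)}$ being cellular $E_n$ operads; and $\Fat\St\LGtree$ is a genuine unital operad, not merely a weak one, courtesy of the thickening construction. Berger's theorem says precisely that a cellular $E_n$ operad is an $E_n$ operad in the homotopy-theoretic sense, i.e.\ it receives a weak equivalence from (equivalently, is connected by a zig-zag of weak equivalences to) the little $n$-cubes operad $\mathcal C_n$, and similarly a cellular $E_\infty$ operad is weakly equivalent to a genuine $E_\infty$ operad. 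So the proof is essentially a citation once the hypotheses are checked.

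First I would recall the statement of Berger's recognition criterion in the form needed: if $\calO$ is a cellular $E_n$ (resp.\ $E_\infty$) operad then the map of operads $\calO \to \mathcal C_\infty$-type comparison, built from the $\K_p$-cellular structure and the hemispherical decomposition of $S^{n-1}$ (resp.\ $S^\infty$), is an equivariant weak homotopy equivalence on each arity, where equivariance means $\calO(p)$ is $\Sn[p]$-free and $\Sn[p]$-equivariantly weakly equivalent to $E\Sn[p]$ (resp.\ for $E_n$, equivariantly equivalent to the configuration space $F(\R^n,p)$). Then I would verify the two remaining hypotheses beyond what the Proposition already gives: (a) freeness of the symmetric group actions on $\Fat\St\LGtree(p)$, which follows because the $\Sn[p]$ action permutes labels of boundary components and an arc graph has no nontrivial automorphisms fixing the labelled boundaries (this is already implicit in the identification of $\St\LGtree(2)$ with $S^\infty\times\Rp$ carrying the free hemispherical $\Zz$ action); and (b) that each $\Fat\St\LGtree^{(n)}(p)$ has the correct homotopy type, which is forced by the cellular $\K_p^{(n)}$ decomposition together with contractibility of the cells, exactly as in Berger's proof that the filtration of a cellular $E_\infty$ operad realizes $\mathcal C_n$.

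The concrete heart of the argument, which I would spell out, is the $p=2$ case: there $\Fat\St\LGtree(2)$ retracts onto $\St\LGtree(2)\cong S^\infty\times\Rp$, the $\K_2$-cells are exactly the hemispheres $B^i$ glued along $S^{i-1}=B^{i-1}_+\cup B^{i-1}_-$, and the $n$-th filtration piece is $S^{n-1}\times\Rp\simeq F(\R^n,2)$. For general $p$ the intersection formula $\calO(p)^{(\a)}=\bigcap_{i<j}(\phi^*_{ij})^{-1}(\calO^{(\phi^*_{ij}(\a))}(2))$ together with contractibility (proved in the Proposition by the weight-collapsing deformation retraction onto first arcs) gives that $\Fat\St\LGtree^{(n)}(p)$ is built cellularly over the $\K_p^{(n)}$ poset in the same way Berger's model $W_n\mathcal C$ is, hence is $\Sn[p]$-equivariantly weakly equivalent to $F(\R^n,p)$; letting $n\to\infty$ handles the $E_\infty$ statement.

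The main obstacle I anticipate is purely bookkeeping: matching our filtration by genus of the surface (which is what the stabilization maps $\stg,\sth$ increment) against Berger's combinatorial filtration index $n$ on $\K_p$, and confirming the periodicity phenomenon advertised in the introduction --- that $E_{2k}$ and $E_{2k+1}$ both sit on genus $k$ surfaces, the distinction being whether one is allowed to twist at the out-boundary (the $S^1$ versus $\Zz$ in the relevant factor). This requires checking that under the identification of cells $\cup_i$ with $(i,\sigma)\in\N\times\Sn[2]$ the parity of $i$ corresponds to the genus jump, which was already observed in the Lemma via $\Delta^{[(i+1)/2]}\times\Delta^{[i/2]}\cong B^i$; once that correspondence is in hand the $E_k$ indexing is forced and the theorem follows by quoting \cite{Berger}. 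I would end the proof with: this is now immediate from the previous Proposition and \cite[Theorem]{Berger} (recognition of cellular $E_n$ and $E_\infty$ operads). \qed
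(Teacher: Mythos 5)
Your proposal is correct and follows essentially the same route as the paper: the theorem is obtained by quoting the Fiedorowicz--Berger recognition theorem (cellular $E_k$ operads are $E_k$ operads, cellular $E_\infty$ operads are $E_\infty$ operads), with all hypotheses already supplied by the preceding Proposition. The additional verifications you sketch (freeness of the symmetric group actions, the identification of the filtration pieces with configuration spaces) are part of what Berger's theorem delivers rather than extra hypotheses to check, so the paper's one-line citation suffices.
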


\begin{proof}
Immediate from Fiedorowicz's theorem \cite[Theorem 1.16]{Berger}, which
states that cellular $E_k$ operads are $E_k$ operads and cellular $E_{\infty}$ operads are $E_{\infty}$ operads.
\end{proof}

Using Corollary \ref{equivcor}:
\begin{cor}
The operads  $\{\St\LGtree_k(n),n>0\}$ are equivalent to $\{C_k(n),n>0\}$
where $C_k$ are the little $k$ cubes and the operad \\
$\{\Fat\St\LGtree,n>0\}$ is an $E_{\infty}$ operad without a $0$--term.
\end{cor}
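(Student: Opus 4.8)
The plan is to deduce the final statement directly from the two results it references — the theorem immediately above (that $\Fat\St\LGtree_k$ is an $E_k$ operad and $\Fat\St\LGtree$ is an $E_\infty$ operad) together with Corollary \ref{equivcor} (that $\{\Fat\calD\Gtree(n),n>0\}$ and $\{\calD\Gtree(n),n>0\}$ are equivalent). Since everything is already in place, the argument is essentially a bookkeeping chase: I would first note that the equivalence of Corollary \ref{equivcor} is $\Rp$-equivariant with respect to the scaling actions, so it descends to an equivalence $\{\Fat\Gtree(n),n>0\}\sim\{\Gtree(n),n>0\}$, and this equivalence is compatible with the stabilization system $\calS$ generated by $\stg(a)$ and $\sth(b,c)$ (both sides carry that system via the operadic inclusions of Proposition \ref{retractprop}). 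Passing to the colimit over $\calS$ — which is filtered and in which each structure map is, up to the retraction of Proposition \ref{retractprop}, a cofibration — gives an equivalence $\{\Fat\St\LGtree(n),n>0\}\sim\{\St\LGtree(n),n>0\}$, and this equivalence respects the filtration by genus, hence restricts to equivalences $\Fat\St\LGtree_k\sim\St\LGtree_k$ for each $k$.

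Next I would combine this with the preceding theorem: $\Fat\St\LGtree_k$ is an $E_k$ operad, and $E_k$ operads are by definition those equivalent (through a chain of operad equivalences) to the little $k$-cubes operad $C_k$. Since $\St\LGtree_k$ is equivalent to $\Fat\St\LGtree_k$, it is therefore equivalent to $C_k$; this gives the first assertion, $\{\St\LGtree_k(n),n>0\}\sim\{C_k(n),n>0\}$. For the second assertion, the same reasoning with $k=\infty$ shows $\Fat\St\LGtree$ is equivalent to the honest $E_\infty$ operad structure, and the one remaining point is simply the observation already recorded in \S\ref{thickendpar}: $\Fat\calD\Gtree(0)$, and hence $\Fat\St\Gtree(0)$, consists of surfaces with nonempty gap, so $\Fat\St\LGtree$ as written has no $0$-term in the usual sense — the unital structure lives on $\Fat\St\LGtree$ enlarged by the point $([D],[1])$, whereas the collection $\{\Fat\St\LGtree,n>0\}$ is the $E_\infty$ operad stripped of that component. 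This is a definitional remark requiring no further work.

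The only step with any content is verifying that the equivalence of Corollary \ref{equivcor} survives the colimit over $\calS$ and is compatible with the genus filtration. The first half is handled by the standard fact that a levelwise equivalence between filtered systems of cofibrations induces an equivalence on colimits, together with Proposition \ref{retractprop} supplying the needed cofibrancy (the thickened spaces deformation-retract onto the unthickened ones, compatibly with the stabilization maps). The second half — filtration compatibility — follows because the inclusion of Proposition \ref{retractprop} and the retraction both preserve the genus of the underlying surface, so the equivalence is filtered; restricting to the genus-$\leq k$ piece then yields $\Fat\St\LGtree_k\sim\St\LGtree_k$ as needed. I expect this compatibility check to be the main (and only) obstacle, and it is a routine one.

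\qed
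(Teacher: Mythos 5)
Your proposal is correct and follows the same route the paper intends: the corollary is deduced by combining the preceding theorem (that $\Fat\St\LGtree_k$ is an $E_k$ operad and $\Fat\St\LGtree$ an $E_\infty$ operad) with the equivalence between the thickened and unthickened versions furnished by Corollary \ref{equivcor} and the gap-contracting retraction of Proposition \ref{retractprop}. The paper leaves the transfer through the stabilization colimit and the compatibility with the filtration implicit, and your explicit check of these points is exactly the routine verification that is being suppressed.
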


\section{CW models and explicit operations}
\label{chainpar}
There are CW models for $\Gtree$ with the suboperads
being sub--CW--models. Moreover the cellular chains are also a model
for $\Fat\Gtree$, so that if we are interested
only in  the chain level, we can omit the thickening
step. The main point is that even if we include the $0$--th spaces
which makes the topological level only associative up to homotopy,
the structures are already operads on the chain level. Of course this is true
for the homology level {\it a priori} and {\it a forteriori}.
These CW--models are well behaved under the stabilization process and hence
we obtain a combinatorial graph chain model for $\St\Gtree$ and $\Fat\St\Gtree$.

\subsection{The CW--model $\Gtree^1$}
\label{normalizedpar}
 This construction is completely analogous to that of
$\Cacti^1$ of \cite{cact}. For an arc graph $\g$ which belongs to
$\Gtree(n)$ let $v_0$ be the vertex corresponding to the boundary
component $0$. We define $\cell(\g):=\prod_{v\in V(\g)\setminus
\{v_0\}} \Delta^{\val{v}-1}$. We identify the interior of this cell
with all the elements of $\calD\Gtree(n)$ whose arc graph is $\g$
and whose weights at all the boundaries except $0$ are $1$, the
coordinates in $\Delta^{\val{v}-1}$ being given by the barycentric
coordinates corresponding to the weights of the incident arcs in
their order. We identify the boundary of this cell by letting the
weight of the arc corresponding to a vertex of the simplex go to $0$
and erasing it when passing to the face.

We define $\Gtree^1(n)$ to be the CW complex formed from the cells
with the attaching maps given above.
We can add a $0$--th term to the quasi operad $\Gtree^1$ where
$\Gtree^1(0)=\Gtree^1(0)$.

\subsection{The quasi--operad $\Gtree^1$ and its induced cellular operad}
We define the operations $\a \circ_i \b$ on $\Gtree^1$
 by using the alternative
gluing that we scale the weights on the boundary $i$ of $\a$ to match
those of $\b$ at $0$ and then glue in.

\begin{prop}
With the gluings above and the action of ${\mathbb S}_n$ permuting
the labels the spaces $\{\Gtree^1(n)\},n\geq0$ form a homotopy associative
operad (aka. topological quasi--operad) such that
\begin{itemize}
\item[i)] the induced quasi--operad
structure on the cellular chain complex $CC_*(\Gtree^1(n))$ is an operad structure and
\item[ii)] the induced operad structure on $H_*(\Gtree^1(n))$
is isomorphic to  $H_*(\Gtree(n))$

\item[iii)] $CC_*(\Gtree^1(n))$ is a CW model for $\Fat\Gtree$.
\end{itemize}

The subspace $\LGtree^1$ given by the cells with graphs of $\LGtree$ is a
sub--quasi operad on the topological level and a suboperad on
the cellular and homology level.

As spaces $\calD\Gtree(n)=\Gtree^1(n)\times {\mathbb R}_{>0}^n$ and as
quasi--operads $\calD\Gtree(n)=\Gtree^1(n)\rtimes {\mathbb R}_{>0}^n$.
Modding out by the overall $\Rp$ action, we obtain similar results
for $\Gtree$.
\end{prop}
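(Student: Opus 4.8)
The plan is to mirror, step by step, the treatment of $\Cacti^1$ in \cite{cact}, since the construction of $\Gtree^1(n)$ is a direct analogue. First I would establish the homeomorphism $\calD\Gtree(n)\cong \Gtree^1(n)\times\Rp^n$: given an arc graph $\g\in\Gtree(n)$ and a projective metric, the $n$ total weights $\lambda_1,\dots,\lambda_n$ at the boundaries $1,\dots,n$ together with the rescaled metric (normalized so each $\lambda_i=1$) recover the element; the normalized metric is precisely a point of $\cell(\g)=\prod_{v\neq v_0}\Delta^{\val v -1}$. One checks that this is continuous with continuous inverse, compatibly with the cell structure, and that the degenerations (weight $\to 0$) match on both sides. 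Dividing by the overall $\Rp$ gives $\Gtree(n)\cong\Gtree^1(n)\times\Rp^{n-1}$ or, more precisely, the statement obtained by modding out one $\Rp$.

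Next I would verify that the scaled gluing $\a\circ_i\b$ — rescale the weights at boundary $i$ of $\a$ to match the total at boundary $0$ of $\b$, then glue foliations — is well defined on $\Gtree^1$ and continuous, and that it makes $\{\Gtree^1(n)\}$ a homotopy-associative operad. Strict associativity fails only because of the normalization choices and the consolidation of parallel bands; the homotopy interpolating the two bracketings is the one already alluded to in the proof that adding $\Gtree(0)$ gives a quasi-operad (and, alternatively, it follows from the honest associativity of $\Fat\Gtree$ via the retraction of Proposition \ref{retractprop}). The $\Sn$-equivariance is immediate from permuting boundary labels. The statement about $\LGtree^1$ being a sub-quasi-operad is then clear because the scaled gluing preserves the linear anti-compatibility condition, exactly as in the parent operad $\LGtree\subset\Arc$.

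For part (i), the key point is that although the topological gluing is only homotopy associative, on cellular chains $CC_*(\Gtree^1(n))$ the induced operation is strictly associative: the various associativity homotopies move within a single cell (they only reparametrize weights without deleting arcs), so they are the identity on $CC_*$. Concretely, $\Gamma$ on chains sends a product of top cells to the top cell of the glued arc graph (with sign/orientation bookkeeping from the ordered arcs), and the combinatorics of which arc graph results is manifestly associative and $\Sn$-equivariant. Part (ii) follows since $\Gtree^1(n)\simeq\Gtree(n)$ as spaces (indeed homeomorphic after reinstating the $\Rp^n$ factor), and the quasi-operad structure induces the correct operad structure on $H_*$ because on homology a homotopy-associative structure is genuinely associative; one checks the induced product agrees with the one coming from $\Gtree$ by comparing representing cycles under the scaling homotopy connecting the $\Darc$-style gluing to the scaled gluing. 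Part (iii) is the observation that the retraction $\Fat\Gtree(n)\to\Gtree(n)$ of Proposition \ref{retractprop}, which scales gaps to $0$ and consolidates parallel bands, is a cellular deformation retraction compatible with the operad structure, so $CC_*(\Gtree^1(n))$ computes $CC_*$ of $\Fat\Gtree$ with its operad structure.

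The main obstacle I expect is being careful about exactly which structure is strict and which is only up-to-homotopy at each level: the topological gluing on $\Gtree^1$ is only a quasi-operad, so in (i) one must genuinely argue that every failure-of-associativity homotopy is supported inside a single open cell (hence invisible to $CC_*$), and in (iii) one must make the retraction $\Fat\Gtree\to\Gtree$ cellular and operadic rather than merely a homotopy equivalence. Both are of the same flavor as the corresponding points for $\Cacti^1$ in \cite{cact}, so I would import those arguments, checking only that the extra features here — higher genus complementary regions and the anti-compatibility/twisting conditions defining $\LGtree\subset\Gtree$ — do not interfere; they do not, since genus and twisting are inert under the scaling and consolidation moves.
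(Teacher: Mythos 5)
Your proposal is correct and follows essentially the same route as the paper, which itself defers to the $\Cacti^1$ argument of \cite{cact,del}: the homeomorphism $\calD\Gtree(n)\cong\Gtree^1(n)\times\Rp^n$ by reading off boundary weights and projectivizing, the semi--direct product via scale--insert--rescale, and Proposition \ref{retractprop} for part (iii). Your added care about which associativity homotopies are cellular (hence invisible on $CC_*$) is exactly the point the paper leaves implicit.
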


For the definition of semi--direct products of operads see \cite{SW}
and for quasi--operads \cite{cact}.

\begin{proof}
We do not wish to go into the gory details.
The proof is a straightforward adaption
from that of $\Cacti$ presented
in \cite{cact,del}.
For the semi--direct product the first homeomorphism
is given by reading off the weights at each boundary and then taking
the projective class of the weights at each boundary individually.
The semi--direct product is given by first scaling, then inserting and
finally scaling back.

The statement about $H_*(\Fat\Gtree)$
follows from Proposition \ref{retractprop}.
\end{proof}

\begin{thm}
The cellular models carry over to the stabilized situation. In
particular $\St\Gtree$ and $\St\LGtree$ have operadic CW--models
$\St\Gtree^1$ and $\St\LGtree^1$ whose underlying spaces are topological--quasi
operads. In the former, the cells are indexed by the quasi--filling
arc graphs, while in the latter the graphs also satisfy the
conditions of $\LGtree$. Furthermore adding  $0$--components
the analogous statements
hold true and the cellular chains of $\St\Gtree^1$ are a
model for $\Fat\St\Gtree$.
\end{thm}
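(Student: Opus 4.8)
The plan is to push the construction of $\Gtree^{1}$ through the colimit over the stabilization system $\calS$, using the structure results above --- chiefly Propositions \ref{stimprop} and \ref{structureprop} --- to pin down the cells, and then to transcribe the quasi--operad and chain--level statements from the unstabilized case almost verbatim. First I would note that $\stg^{g}(a)$ and $\sth^{g}(b,c)$ alter an arc family only near the boundary $0$ (they graft on a handle carrying the single arc of $\G$, resp.\ $\H$, with a net twist of zero), so they restrict to maps $\Gtree^{1}_{g}(n)\to\Gtree^{1}_{g+1}(n)$ and likewise on the $\LGtree$--subcomplexes; on the open cell $\cell(\g)$ of a quasi--filling graph $\g$ the image arc graph is the unstable, genus--raised graph $\stg(\g)$, resp.\ $\sth(\g)$.

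Forming $\St\Gtree^{1}(n):=\colim_{\calS}\Gtree^{1}_{g}(n)$, these stabilization relations identify each unstable cell with the quasi--filling cell it came from, and by Proposition \ref{stimprop} every point of the colimit has a representative $[T_{b}\circ_{1}\a']$ with $\a'$ quasi--filling. Hence $\St\Gtree^{1}(n)$ is the CW complex whose cells $\cell(\g)$ are indexed by the quasi--filling arc graphs of $\Gtree(n)$ of all genera, with the attaching maps of $\Gtree^{1}$ modified as follows: delete an arc, and whenever the resulting face is unstable, pull down its handles and boundary defects (Lemma \ref{structurelemma} and equation (\ref{decompeq})), consolidate the arcs that have become parallel, and read off the quasi--filling, lower--genus representative prescribed by Proposition \ref{stimprop}. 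For $\St\LGtree^{1}$ one imposes in addition the conditions of $\LGtree$; since $\LGtree$--graphs carry no twist, no residual twist factor survives and the complex is the quasi--filling subcomplex $\LGtree^{1}_{\#}(n)$, in agreement with Corollary \ref{quasicor}. Its underlying space is $\St\LGtree(n)$, and likewise $\St\Gtree^{1}(n)$ has underlying space $\St\Gtree(n)$, through the homeomorphism $\calD\Gtree(n)\cong\Gtree^{1}(n)\times\R_{>0}^{n}$ established above, which intertwines the maps of $\calS$.

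The remaining assertions are then formal. The rescaled gluing of $\Gtree^{1}$ descends to $\St\Gtree^{1}$ because, by Propositions \ref{stimprop} and \ref{structureprop}, the stabilized composite $\a\circ_{i}\b$ is once more computed from the quasi--filling parts up to a net twist, which is trivial on cells; so $\{\St\Gtree^{1}(n)\}$ is a topological quasi--operad, $CC_{*}(\St\Gtree^{1}(n))$ is a genuine operad, and $H_{*}(\St\Gtree^{1}(n))\cong H_{*}(\St\Gtree(n))$, by the very arguments that established these facts for $\Gtree^{1}$. Adjoining the $0$--term as before --- its stabilization $\St\Gtree^{1}(0)$ being the point which is the image of the disc --- gives the quasi--unital structure. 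For the last claim, the retraction $\Fat\Gtree(n)\to\Gtree(n)$ of Proposition \ref{retractprop} is $\calS$--compatible, hence passes to a retraction $\Fat\St\Gtree(n)\to\St\Gtree(n)$; combined with the fact that $CC_{*}(\Gtree^{1})$ is already a model for $\Fat\Gtree$ and with passage to colimits, this shows $CC_{*}(\St\Gtree^{1}(n))$ is a chain model for $\Fat\St\Gtree(n)$. The one genuinely technical point, which I expect to be the main obstacle, is the bookkeeping in the previous paragraph: verifying that deleting different arcs of a quasi--filling cell produces consistently identified faces under the stabilization rules and that the prescribed attaching maps reproduce the colimit topology --- which is exactly what Proposition \ref{structureprop} together with the additivity of the Euler defect (Lemma \ref{eulerlem}) are there to ensure, just as in the $\St\LGtree(2)$ computation above.
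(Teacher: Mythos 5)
Your proposal is correct and follows essentially the same route as the paper, whose own proof is simply the observation that the indexing by quasi--filling graphs is immediate from Corollary \ref{quasicor} and that the rest follows by the ``usual techniques'' of \cite{cact,hoch1}. You have merely unpacked those techniques explicitly --- compatibility of the maps in $\calS$ with the normalization of $\Gtree^{1}$, the modified attaching maps via Proposition \ref{stimprop}, descent of the gluing via Proposition \ref{structureprop}, and the retraction of Proposition \ref{retractprop} for the $\Fat$ statement --- all of which is consistent with the paper's argument and with the remark following the theorem describing the stabilized cell boundaries.
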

\begin{proof}
The indexing by quasi--filling graphs is clear in view of Corollary
\ref{quasicor}. The rest is straightforward using the usual
techniques of \cite{cact,hoch1}.
\end{proof}

\begin{rmk}
We wish to point out that the boundary of a cell $\cell(\g)$ now
consists of those graphs obtained by removing an arc from
 $\g$ which is not the only arc incident to a boundary and then stabilizing.
For an illustration see Figure \ref{cup2} and consult the Appendix
for further remarks and discussion.
\end{rmk}

\subsection{Explicit operations}
In view of the CW--models above we can easily write down cellular
representative for classical operations.

\subsubsection{The $\cup_i$ operations}

We have already identified the $\cup_i$ products in the
hemispherical decomposition. The graphs are in Figure \ref{cupi}.
Now we can take the same graphs and reinterpret them as generators
of $CC_*(\St\Gtree^1(2))$.

\begin{rmk}
We wish to point out that these operations belong to the appropriate
part of the filtration of the cellular $E_{\infty}$ operad.
More interestingly, the $\cup_i$ product is realized on a surface of genus $g=[i/2]$.

Moreover this periodicity also manifests itself in the fact that
$\cup_{2k}$ is in the image of $\LGtree'$ and $\cup_{2k+1}$ is
twisted at zero. This means that the sequence is: twist at zero, add
genus, twist at zero etc..

\end{rmk}

\subsubsection{Dyer--Lashof operations}
In this formalism we can also make the Dyer-Lashof-Cohen operations
for double loop spaces explicit. By the general theory, see
\cite{Co} we need to find particular elements
\begin{equation}
\label{xieq} \xi_1 \in H_{p-1}(C_2(p)/{\mathbb S}_p,\pm \Z/p\Z)
\end{equation}
 that is
homology classes of the little 2--cubes
with values in the sign representation.

Now taking co-invariants on $CC_*(\St\LGtree^1(p))=CC_*(\Cact^1(p))$
which is a chain model for the little discs operad $D_2(p)$ we
see that the $p$--the iteration of
the product $\cup_1$ that is the operation given by
\begin{equation}
{}^p\cup_1:=\gamma(\gamma(\dots (\gamma(\cup_1),\cup_1),\dots,\cup_1),\cup_1 )
\end{equation}
gives a class that is the sum over all trees of the highest
dimension where the partial order on the labeled vertices when
considered in the usual tree partial order is compatible with the
linear order on $\bar n$.

\begin{prop}
${}^p\cup_1$ represents
 the cohomology class $\xi_1$ of equation (\ref{xieq})  in
$H_{p-1}(\St\LGtree_2(p)/{\mathbb S}_p,\pm \Z/p\Z)$.
\end{prop}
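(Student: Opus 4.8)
The plan is to identify the class $\xi_1$ with the standard generator of the top homology of the little 2--cubes quotient and then check that our explicit iterated product ${}^p\cup_1$ realizes exactly this generator on the cellular model. First I would recall from Cohen's work \cite{Co} that the relevant class $\xi_1 \in H_{p-1}(C_2(p)/\mathbb{S}_p, \pm\mathbb{Z}/p\mathbb{Z})$ is characterized (up to sign and nonzero scalar mod $p$) as the fundamental class coming from the top cell of a minimal CW model of $C_2(p)$; concretely, in the May--type model of configuration space it is the cycle built from the ordered configuration of $p$ points on a line, iterated in the second coordinate, which is the image of the top chain in the cellular chains of the braided bar construction. The point is that $H_*(\St\LGtree_2(p)/\mathbb{S}_p)$ is computed by the $\mathbb{S}_p$--coinvariants of $CC_*(\St\LGtree^1(p)) = CC_*(\Cact^1(p))$ with sign--twisted coefficients, so I only need to exhibit a specific top--dimensional cellular cycle and match it.

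Second I would carry out the dimension count. By the earlier identification $\St\LGtree(2) \simeq S^\infty \times \Rp$ with the hemispherical $\K_2$ decomposition, and by the operad composition law for $\K$, the cell indexed by an iterated $\cup_1$ in $\St\LGtree(p)$ corresponds to a tree where the partial order of vertices refines the linear order on $\bar n$; the top such cell has dimension $p-1$, matching the degree of $\xi_1$. I would show that ${}^p\cup_1 = \gamma(\gamma(\dots(\gamma(\cup_1),\cup_1),\dots,\cup_1),\cup_1)$ is a cycle in $CC_{p-1}(\St\LGtree^1(p))$: this follows because its boundary is the alternating sum of the codimension--one faces obtained by deleting a non--sole arc at a boundary, and after passing to $\mathbb{S}_p$--coinvariants with the sign representation these terms cancel in pairs (two faces differing by a transposition of adjacent blocks are identified with opposite sign), exactly as in the classical computation of $H_*(D_2)$ — this is the same mechanism by which the bracket and the Dyer--Lashof classes survive.

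Third I would verify that the resulting coinvariant class is \emph{nonzero} mod $p$ and is the generator, i.e.\ genuinely equals $\xi_1$ and not, say, $0$ or a proper multiple. For this I would appeal to the naturality of the Fiedorowicz--Berger recognition: the equivalence $\St\LGtree_2 \sim C_2$ of the preceding Corollary is an equivalence of operads, hence induces an isomorphism on the homology of the $\mathbb{S}_p$--quotients with local coefficients, and under this isomorphism a top--dimensional generating cycle maps to a top--dimensional generating cycle. Since $H_{p-1}(C_2(p)/\mathbb{S}_p, \pm\mathbb{Z}/p\mathbb{Z}) \cong \mathbb{Z}/p\mathbb{Z}$ is generated by $\xi_1$, and ${}^p\cup_1$ is carried by the unique top cell (the fully refined tree) which is nonzero mod $p$, it must represent $\pm\xi_1$ up to a unit, which is all that is required.

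The main obstacle I anticipate is the third step: pinning down that ${}^p\cup_1$ is the generator rather than a multiple (or zero) in the sign--twisted coinvariants. This requires either a careful bookkeeping of the $\mathbb{S}_p$--action on the top cells of $\St\LGtree^1(p)$ — counting the orbit of the linearly--ordered tree and the signs with which its translates appear in ${}^p\cup_1$ — or a clean invocation of the operadic equivalence together with Cohen's characterization of $\xi_1$ as the image of the iterated product in his model. I would lean on the latter, since the iterated--$\cup_1$ description of $\xi_1$ is precisely Cohen's original one transported across the operad equivalence, so the matching is essentially a formality once the cycle condition and the nonvanishing of the top cell mod $p$ are established.
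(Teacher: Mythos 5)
Your proposal is correct and follows essentially the same route as the paper, which likewise reduces the claim to a direct (and admittedly tedious) computation of the cellular boundary of ${}^p\cup_1$ in the sign--twisted $\mathbb{S}_p$--coinvariants, deferring the bookkeeping to Tourtchine's calculation transported from Hochschild operations to cells. One minor correction: ${}^p\cup_1$ is not carried by a unique top cell but is the sum over \emph{all} top--dimensional cells indexed by trees whose partial order is compatible with the linear order (e.g.\ the full hexagon for $p=3$); your nonvanishing argument survives this, since a nonzero top--dimensional chain in the $(p-1)$--dimensional complex in question cannot be a boundary and the target group is cyclic of order $p$.
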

\begin{proof}
This is a tedious but fairly  straightforward
calculation  of the boundary of said combination of cells.
The actual calculation can be adapted from the proof of Tourtchine
 \cite{tourch} using cells instead of operations on the Hochschild complex.
 The dictionary for this is provided by \cite{del}.
\end{proof}

The first example for $p=2$ is  given by the operation of $\cup_1$
which has boundaries in the multiplication and its opposite, cf.\
Figure \ref{cupi}, and the example for $p=3$ is the hexagon of
Figure \ref{dl3} with $i=1$.

\begin{figure}
\epsfxsize = \textwidth \epsfbox{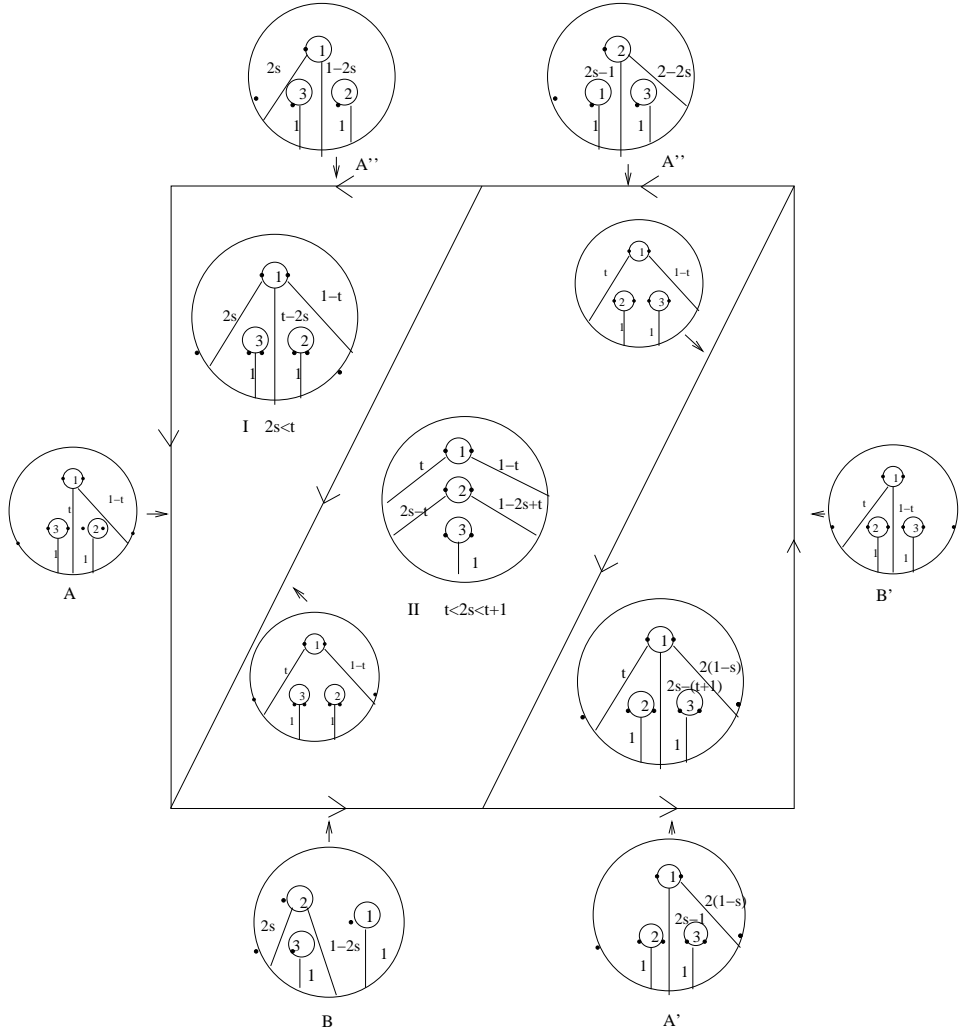} \caption{\label{dl3} The
hexagon that gives the Dyer--Lashof operation. The cells A,A' and
A'' become equivalent as well as B, B' and B'' become equivalent
after passing to coinvariants using the sign representation.}
\end{figure}

\begin{rmk}
We wish to point out several interesting facts.
\begin{enumerate}
\item The class is
solely induced by an operation for $p=2$.
\item The resulting
cell description is just the left iteration of $\cup_1$, whereas the
right iteration of $\cup_1$ is the simple class given by a cube.
\item When acting on $p$ times the same even element, the
two iterations coincide.
Furthermore the action factors through the coinvariants.
When acting on $p$ times the same  odd element, the action
factors through the coinvariants under sign representation.
This is  not surprising, but here we have a very geometric picture.

For the actions, we can take the action of the chain operad on itself of the action on the Hochschild complex
as defined in \cite{del}. Using the former action, we obtain a universal geometric point of view.
\end{enumerate}
\end{rmk}

\begin{rmk}
Using the induced action on the Hochschild complex of \cite{del}, we
 reproduce the results of
Westerland \cite{craig} and Tourtchine \cite{tourch} on representing the
Dyer--Lashof--Cohen operations on the Hochschild co--chains.
\end{rmk}

\subsection{Relation to the McClure--Smith sequence operad}

\begin{df} The {\em sequence} of an element $\a\in \GTree(n)$ is the sequence
$\seq(\a):{1,\dots, E_{\Gamma(\a)}}\to \{1,\dots,n\}$,
defined by the identification of $(E_{\Gamma(\a)},\prec_0)$
with ${1,\dots, E_{\Gamma(\a)}}$
provided by $\prec_0$ and the
 map $(E_{\Gamma(\a)},\prec_0)\to \{1,\dots,n\}$ which maps
each element of the ordered set $(E_{\Gamma(\a)},\prec_0)$ to the label of
its incident boundary that is not the boundary labeled by $0$.
\end{df}

\begin{rmk}
In general for $\Gtree$ the information $\seq(\a)$ does not contain all the information about $\a$. This is different for $\St\LGtree$.
\end{rmk}

We already know that both
the sequence operad of McClure and Smith \cite{McS}, which we will
call $\MS$, and the
operad $\St\LGtree$ are $E_{\infty}$ operads
 and hence they  are equivalent.
The map above makes this
explicit.

\begin{prop}
The map $\a\mapsto \seq(\a)$ induces a surjective morphism of
operads $\St\LGtree \to \MS$. It maps the cellular filtration above
to the filtration by complexity which was introduced in \cite{McS}.
\end{prop}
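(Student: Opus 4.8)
The statement has three parts: that $\a\mapsto\seq(\a)$ is well-defined on $\St\LGtree$, that it is a morphism of operads, that it is surjective, and that it carries the cellular filtration to the complexity filtration of \cite{McS}. The plan is to deal with these in that order. First I would recall from Corollary \ref{quasicor} that $\St\LGtree(n)$ is identified with the quasi--filling elements of $\LGtree(n)$, and observe that for such an element the arc graph is determined up to isotopy by the combinatorial data of which boundary $\neq 0$ each arc hits together with the linear order $\prec_0$ at the boundary $0$ --- this is exactly the content of the remark that ``this is different for $\St\LGtree$'' (in $\Gtree$ the genus and the regions carry extra information, but quasi--filling plus the $\LGtree$ condition pins the surface down to a disc with bands). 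Hence the map $\a\mapsto \seq(\a)$, which only records this data plus the projective weights, descends to a well-defined surjection onto the underlying arc graphs of $\MS$; the weights themselves map to the simplicial coordinates of $\MS(n)$, so one gets a continuous surjection of spaces $\St\LGtree(n)\to\MS(n)$, equivariant for the $\Sn$ action on the boundary labels $\{1,\dots,n\}$.

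Next I would check compatibility with the operadic compositions. The composition $\a\circ_i\b$ in $\St\LGtree$ is the stabilized gluing: scale the weights at boundary $i$ of $\a$ to match those at boundary $0$ of $\b$, glue the foliations, fuse parallel bands, and stabilize away any genus that appears. On the level of $\seq$, gluing boundary $i$ of $\a$ into boundary $0$ of $\b$ replaces the block of arcs of $\a$ hitting boundary $i$ by the arcs of $\b$, threaded in according to how the bands of $\b$ at its boundary $0$ (ordered by $\prec_0^{\b}$) interleave with the incoming leaves --- and because both families lie in $\LGtree$, the interleaving is precisely the ``shuffle'' prescription that defines $\circ_i$ in the McClure--Smith sequence operad. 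I would verify this by tracing through the band-cutting picture (Figure \ref{gluingex1}) for $\LGtree$ elements: since all arcs run to the cut boundary and the orders are anti-compatible, no braiding occurs, so the order $\prec_0$ on the glued arcs is exactly the one $\MS$ records. The fusing of parallel bands corresponds to the fact that in $\MS$ one does not allow repeated consecutive values in a reduced sequence (and the stabilization discards genus, which $\seq$ never saw anyway), so the two recipes agree on the nose.

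For surjectivity it is enough to note that every sequence $s\colon\{1,\dots,m\}\to\{1,\dots,n\}$ that is the sequence of some element of $\MS(n)$ arises as $\seq$ of an honest quasi--filling arc graph in $\LGtree(n)$: build the disc with $m$ bands running to the boundary $0$ in the order dictated by $s$, assign arbitrary positive weights, and check the $\LGtree$ and quasi--filling conditions hold by construction (the complementary regions are polygons). Finally, for the filtration statement I would use the description of the cellular filtration of $\St\LGtree$ given above --- a cell for $\cup_i$ sits in filtration level tied to $[i/2]$ via Berger's $\K$-grading, with the genus of the surface recording the filtration degree --- and compare it with McClure--Smith's complexity, which counts (roughly) the number of ``direction changes'' in the sequence. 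The key computation is that $\seq$ of the iterated products ${}^p\!\cup_1$ and the cells $\cup_i$ realize exactly the generators of successive complexity strata; this can be read off from the same boundary-of-a-cell analysis used in the proof that ${}^p\!\cup_1$ represents $\xi_1$.

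\textbf{Main obstacle.} The routine parts are well-definedness and surjectivity. The real work is the second step: showing that the stabilized gluing in $\St\LGtree$, \emph{after} fusing parallel bands and discarding genus, reproduces the reduced-shuffle composition of $\MS$ \emph{exactly} rather than merely up to homotopy. One must argue carefully that for $\LGtree$ elements the cut-and-fuse procedure introduces no reordering of arcs at the boundary $0$ and that the genus created by the gluing is invisible to $\seq$ --- i.e.\ that it is absorbed by the colimit defining $\Sttree$. The matching of the two filtrations is then a bookkeeping consequence, but getting the combinatorics of the gluing to line up with the definition in \cite{McS} is where care is needed.
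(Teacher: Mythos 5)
Your proposal is correct and follows essentially the same route as the paper, whose entire proof is the assertion that the morphism property, surjectivity, and the matching of the cellular filtration with the complexity filtration are all "an unraveling of the definitions"; your elaboration of the band-cutting/shuffle compatibility and of why stabilization is invisible to $\seq$ is exactly the content of that unraveling. One minor caveat: since $\seq$ forgets the weights entirely, the morphism is most naturally read at the level of cells (i.e.\ into the chain-level surjection operad of \cite{McS}) rather than as a continuous map onto "simplicial coordinates of $\MS(n)$," but this does not affect the argument.
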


\begin{proof}
The fact that this is an operadic morphism that is surjective is
only an unraveling of the definitions. Also it is straightforward
from the definitions that the complexity of the sequence of boundary
arcs corresponds exactly to the filtration induced by the
$\K$--structure.
\end{proof}

\begin{rmk}
Probably the operads are even isomorphic, but for our purpose to make the
equivalence explicit
the result above is sufficient.
\end{rmk}

\section{Applications and Outlook}

\subsection{Actions on Hochschild}

Let $A$ be a commutative Frobenius algebra with non--degenerate
pairing $\langle \;,\;\rangle$ and unit $1$. Set $\int a:=\langle
a,1\rangle$. Let $e$ be the Euler element of $A$. Let $\Delta$
be the comultiplication which is the adjoint of the multiplication $\mu$,
then $e:=\mu\Delta(1)$.

\begin{prop}
The action of $CC_*(\Gtree)$ on the Hochschild co--chains
$HC^*(A,A)$ as defined by restricting the action
\cite{hoch1,hoch2} passes to $CC_*(\Sttree)$ if and only if the
Euler element of $A$ is the unit.
\end{prop}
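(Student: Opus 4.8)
The plan is to analyze the stabilization map at the level of Hochschild actions and understand what the stabilizing moves $\stg$ and $\sth$ do algebraically. Recall from Proposition \ref{stimprop} that the stabilization is governed by the maps $\a\mapsto T_{-a}\circ_1 \G\circ_1 T_a\circ_1\a$ and $\a\mapsto T_{-a}\circ_1\H_b\circ_1 T_a\circ_1\a$, and that the essential content is collapsing the unstable parts $\G$ and $\H$ while retaining only the net twist. So the real question is: what operator on $HC^*(A,A)$ is induced by the unstable arc graphs $\G\in\Gtree_1(1)$ and $\H_b\in\Gtree_1(1)$ under the action of \cite{hoch1,hoch2}? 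Once I identify these operators, the action descends to the colimit $\Sttree$ precisely when these operators are (chain homotopic to, or equal to, depending on whether we work on chains or homology — here on $CC_*$) the identity, since the stabilization maps are built from $\circ_1$ with $\G$ (resp.\ $\H_b$) pre- and post-composed with twists $T_{\pm a}$, and twists act by the known $S^1$-action whose effect is controlled.

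First I would recall the explicit action of $CC_*(\Arc)$, or at least of these specific cells, on $HC^*(A,A)$ from \cite{hoch1,hoch2}: an arc graph with one boundary $0$ and arcs running to other boundaries gets sent to an operation on Hochschild cochains built by decorating the complementary regions with the structure maps of the Frobenius algebra $A$ — multiplications at the ``inputs'' read off boundary $i$, and the metric/copairing inserted along the handle. The key computation is that the genus-one handle $\G$ (one arc, the complementary region being a once-holed torus, hence $\chi = -1$) contributes exactly the operator of multiplication by the Euler element $e = \mu\Delta(1)$: tracing a loop around the handle inserts $\sum e_i \cdot (-) \cdot e^i$ where $\{e_i\},\{e^i\}$ are dual bases, and since $A$ is commutative this is multiplication by $\sum e_i e^i = \mu\Delta(1) = e$. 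Similarly $\H_b$, which has one extra complementary region (boundary defect rather than genus defect), also contributes multiplication by $e$ (possibly up to the twist parameter $b$, which contributes a rotation that is trivial on the relevant cell since we are on boundary $0$ with one arc, or is absorbed into the retained net twist). This is the heart of the matter and the main obstacle: correctly extracting from the definitions of \cite{hoch2} that the ``defect'' cells act by multiplication by $e$, using commutativity of $A$ in an essential way. I would lean on the fact, quoted in the introduction, that \cite{hoch2} already identifies the Euler element as the obstruction to killing the bracket, so this computation is morally present there.

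Given that, the argument concludes as follows. The stabilization map $\stg^g(a)$ sends a cell $c$ to (a representative of) $T_{-a}\circ_1\G\circ_1 T_a\circ_1 c$; applying the action functor, this becomes $\rho(c)$ precomposed with the rotation $\rho(T_a)$, then multiplication by $e$, then the inverse rotation $\rho(T_{-a})$ — but on $CC_*$ at this cell the twists act trivially or cancel, so the induced operator is $m_e\circ\rho(c)$ where $m_e$ is multiplication by $e$ applied at the output. Hence the system of maps $\calS$ defining $\Sttree$ is, after applying the action, the system generated by $c\mapsto m_e\circ c$ (together with the $\H$-version, giving the same $m_e$). The colimit over this system carries a well-defined action of $CC_*(A,A)$ if and only if $m_e = \mathrm{id}$, i.e.\ $e = 1$, since otherwise the transition maps are not invertible and the operations on different stages of the colimit disagree; conversely if $e=1$ every stabilization map is sent to the identity modification of the action, so the action factors through the colimit. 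This proves the ``if and only if''. I would close with a remark that on the level of $\Fat\Sttree$ the same computation applies since the thickening retracts onto the unthickened version (Proposition \ref{retractprop}) and the gap data does not affect the algebraic action.
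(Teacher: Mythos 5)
Your proposal is correct and follows essentially the same route as the paper: the paper's proof simply recalls from \cite{hoch1,hoch2} that each complementary region $R$ contributes a local factor $\int a_1\cdots a_n\, e^{-\chi(R)+1}$ and observes that stabilization sets the power of $e$ to $1$, which is exactly your identification of the generators $\G$, $\H_b$ of the system $\calS$ as acting by multiplication by powers of the Euler element. The only small discrepancy is the exponent ($\G$ has Euler defect $-2$, so it contributes $e^2$ rather than $e$), but this does not affect the if-and-only-if since the $\H$-generators already force $e=1$.
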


\begin{proof}
We would only like to recall that the action is given by the product
over local contributions where in particular there is one such
contribution for each complementary region which are of the form
$\int a_1 \dots a_n e^{-\chi(R)+1}$, where $e$ is the Euler element
of $A$. The effect of stabilizing is to set the factor of
$e^{\chi(R)-1}$ to $1$, whence the claim.
\end{proof}

\begin{ex}
The condition is met in the case that $A$ is
semi-simple with a unital metric. I.e.\ there is a basis of
idempotents $e_ie_j=\delta_{ij}e_i$ and $\int e_i=1$.
\end{ex}

\begin{rmk}
We again wish to make several remarks
\begin{enumerate}
\item In particular, if $A=H^*(X)$ with $X$ a compact manifold, $A$
is semi--simple only  if $X$ is a point. In all other cases the only
way would be to formally invert the nilpotent element $e$ which
yields the zero algebra. This means that the stabilization is not
compatible with string topology, which is good, since otherwise the
string bracket would vanish.
\item  We know that for Fano varieties, which have  a system of
exceptional sheaves of appropriate length,
the quantum cohomology is however semi--simple. This points to a
connection with quantum cohomology.

\item We expect that the stabilization in our sense is related to the
stabilization in the usual sense (see e.g.\ \cite{Harer}), see below. In  particular this
gives a point of contact with  Witten's $\tau$ function and higher
Weil--Petersson volumes \cite{KMZ,MZ}.

\item In order to obtain  actions in a wider setting
one could try to use  a conformal scaling or to alter the
differential.
\end{enumerate}
\end{rmk}

\subsection{$\Omega$ spectra}
Since $\Fat\St\LGtree$ is an $E_{\infty}$ operad it detects infinite
loop spaces and so does any operad it is a suboperad of.
Furthermore, since it acts on any operad, of which
it is a suboperad, the
respective operad will yield an $\Omega$ spectrum.

\begin{thm}
The group completions of  $\amalg_n \Fat\St\LGtree(n)$,
$\amalg_n \Fat\St\CGtree(n)$ and
 $\amalg_n \Fat\Sttree(n)$ are infinite loop
spaces and hence yield $\Omega$ spectra. Furthermore
all the operads detect infinite loop spaces.
\end{thm}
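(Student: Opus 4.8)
The plan is to deduce everything from the recognition principle for $E_\infty$ operads together with the operad structures already established in the body of the paper. First I would recall that by the theorem above $\Fat\St\LGtree$ is an honest $E_\infty$ operad, so by May's recognition principle \cite{may2} the group completion of $\amalg_n \Fat\St\LGtree(n)$ --- with the monoid structure coming from the operad composition with the distinguished point $* \in \Fat\St\LGtree(0)$ --- is an infinite loop space, yielding an $\Omega$ spectrum. The core observation driving the rest is that $\Fat\St\LGtree$ acts on any operad of which it is a suboperad: indeed if $\mathcal O$ is an operad containing $\Fat\St\LGtree$ as a suboperad, then $\mathcal O$ is a $\Fat\St\LGtree$--algebra (in the operadic, or ``operad-with-multiplication'', sense) via the inclusion and the internal composition $\Gamma$ of $\mathcal O$. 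By the same recognition principle, an $E_\infty$ action on $\amalg_n\mathcal O(n)$ makes its group completion an infinite loop space.

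Next I would verify that $\Fat\St\LGtree$ is a suboperad of $\Fat\St\CGtree$ and of $\Fat\Sttree$. This is exactly the content of the inclusions $\LGtree \subset \CGtree \subset \GTree$ recorded in \S\ref{subspaces}, which survive thickening (Proposition \ref{retractprop} and the paragraph defining $\Fat\LGtree$, $\Fat\LGtree'$) and survive stabilization (the theorem stating that the operad structure of $\Fat\Gtree$ descends to $\Fat\St\Gtree$ with $\Fat\St\LGtree$, $\Fat\St\LGtree'$ suboperads; the same argument applies verbatim to $\Fat\St\CGtree$ and $\Fat\St\Gtree$ since stabilization adds a net twist of zero). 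Hence $\amalg_n\Fat\St\CGtree(n)$ and $\amalg_n\Fat\Sttree(n)$ each carry an action of the $E_\infty$ operad $\Fat\St\LGtree$, compatible with the monoid structure given by gluing in the disc $*\in\Fat\St\Gtree(0)$, which is a point. Applying the recognition principle once more shows that their group completions are infinite loop spaces and produce $\Omega$ spectra. The final clause, that ``all the operads detect infinite loop spaces'', then follows because each of $\Fat\St\LGtree$, $\Fat\St\CGtree$, $\Fat\St\Gtree$ contains the $E_\infty$ operad $\Fat\St\LGtree$ as a suboperad, and any operad receiving a map from an $E_\infty$ operad detects infinite loop spaces in the sense of \cite{may2,Berger}.

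The main obstacle I anticipate is purely bookkeeping: making sure the unitality is genuinely strict rather than merely homotopy-coherent, so that May's recognition principle applies on the nose. This is precisely why the thickening was introduced --- $\St\LGtree$ alone is only weak unital and quasi-unital --- so I would be careful to phrase everything in terms of the $\Fat$ versions, where $\Fat\St\Gtree(0)$ is an honest point and the compositions are strictly associative and unital by the Proposition on thickened gluing. A secondary point to check is that the action of $\Fat\St\LGtree$ on the larger operads is through basepoint-preserving maps compatible with the $\mathbb S_n$--actions, which is immediate from the operadic inclusion and equivariance of $\Gamma$. Given these, no further analysis is needed.
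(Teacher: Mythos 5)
Your proposal is correct and follows essentially the same route as the paper, which proves this theorem by exactly the observation in the preceding paragraph: $\Fat\St\LGtree$ is an $E_\infty$ operad, it sits inside each of the listed operads, and hence acts on them, so the recognition principle applies. Your additional care about strict unitality (working with the $\Fat$ versions so that the $0$--term is an honest point) is precisely the role the thickening plays in the paper.
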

\qed

\subsection{Stabilized arcs and Sullivan PROP}

\subsubsection{Stabilizing the Sullivan PROP of \cite{hoch1}}
Of course $\Gtree$ is also a suboperad of $\Arc$ and of the Sullivan
PROP of \cite{hoch1}. It would hence make sense to try and stabilize
these two constructions. For the Sullivan PROP this is  rather
straightforward, since we again can choose to stabilize at the out
boundaries only.

Without going into the full details, we define the stabilized Sullivan
quasi--PROP
to be the colimit over all maps in the system $\calS$ where now we are
allowed to glue to any out boundary.

\subsubsection{Stabilizing the $\Arc$ operad.}
For the $\Arc$ operad we have to use a cyclic
alternative.  That is as a first approximation we would like to define the stabilization
by gluing on the system $\calS$ in all possible ways. This poses no problem.
But, we will also have to deal with other types of degeneracies, which lead to
disconnected graphs; see the Appendix for some more details.

 Using this more careful analysis it will be possible to add a neutral element
using thickenings.

 \begin{conj}
 There is a suitable stabilization $\Fat\St\Arc$ of a
thickening of $\Arc$ whose $0$--term is the disc and which contains $\Fat\St\Gtree$
as a suboperad and hence the  group completion
of $\amalg_{n\geq 0} \Fat\St\Arc(n)$ is an infinite loop space.
 \end{conj}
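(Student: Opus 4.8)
The plan is to transplant to the cyclic setting the three-step recipe used above for $\Gtree$ --- thicken, stabilize, recognize an $E_\infty$ suboperad --- and then to quote the recognition principle exactly as in the $\Omega$-spectrum theorem. First I would build a thickening $\Fat\Arc$ of $\Arc$ by the device of \S\ref{thickendpar}: an element is a pair $(\a,\gap)$ consisting of a weighted arc graph on some $F_{g,r}$, now allowed to be non-exhaustive and to have parallel arcs, together with a gap-labelling function; the one change is that, since $\Arc$ is a \emph{cyclic} operad with no distinguished ``out'' boundary, gaps must be allowed at every boundary. Defining the thickened gluing as in that section --- leave the to-be-erased leaves ending on the separating curve, glue, then erase --- makes $\Fat\Arc(n)$, $n\ge 0$, a strictly associative cyclic operad that retracts onto $\Arc$, with $0$-term the surfaces $F_{g,1}$ carrying the empty foliation modulo an overall scaling $\Rp$. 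This step is essentially bookkeeping once one commits to keeping gaps everywhere.

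Next I would enlarge the stabilization system $\calS$ so that the maps $\stg(a)$, $\sth(b,c)$ --- which glue the unstable elements $\G$, $\H_b$ onto a boundary --- are allowed at \emph{every} boundary, done $\Sn$- and cyclically-equivariantly, and set $\Fat\St\Arc(n):=\colim_{\calS}\Fat\Arc(n)$. The core of the construction is the cyclic analogue of Lemmata \ref{fatlemone}, \ref{fatlemtwo} and Propositions \ref{stimprop}, \ref{structureprop}: one must show that, modulo the relations generated by $\calS$, every element decomposes as a twist/$\G$/$\H$ part applied to a stable (quasi--filling) representative, that the $\G$- and $\H$-factors commute past arbitrary operadic compositions up to a compensating twist, and that the net twist introduced by a stabilization is zero; this is what forces the operad structure --- including compositions with the $0$-term --- to descend to the colimit. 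Because $F_{g,1}=\G\circ_1\cdots\circ_1\G\circ_1 D$, every positive-genus surface with empty foliation is identified with the disc $D$ in the colimit, so $\Fat\St\Arc(0)$ is a point and $\Fat\St\Arc$ is a unital operad.

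The inclusion $\Gtree\subset\Arc$ of cyclic operads, combined with the observation that stabilizing $\Gtree$ at its boundary $0$ is a special case of the cyclic stabilization of $\Arc$, yields an operadic map $\Fat\St\Gtree\to\Fat\St\Arc$; one must check it is injective, i.e.\ that no extra identifications are forced on $\Gtree$-type elements, which holds because a stabilization at a boundary $i\neq 0$ of an arc graph all of whose arcs hit $0$ again produces an arc graph all of whose arcs hit $0$, with stable representative already in $\St\Gtree$. Granting this, $\Fat\St\Arc$ is a unital operad containing the $E_\infty$ operad $\Fat\St\LGtree\subset\Fat\St\Gtree$ as a suboperad, and the argument of the $\Omega$-spectrum theorem applies unchanged: such an operad acts on the monoid $\amalg_{n\ge 0}\Fat\St\Arc(n)$ through its own composition, making it an $E_\infty$-monoid whose group completion is an infinite loop space by the recognition principle of \cite{may2}.

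The main obstacle is Step~2. In the full $\Arc$ operad one must handle arcs between arbitrary boundary pairs, self-arcs, closed loops created by gluing, \emph{free} genus- and boundary-defects not anchored at a single boundary, and --- most seriously --- the extra degeneracies producing \emph{disconnected} complementary regions noted in the Outlook; each of these breaks the tight combinatorial control that ``all arcs run to boundary $0$'' gave in $\Gtree$. Proving the decomposition and commutation lemmata in this generality, and verifying that the enlarged system $\calS$ is confluent enough for the operadic composition to pass to the colimit while the thickening still repairs associativity of the arc-erasing gluing, is precisely the work that remains --- and the reason the statement is posed as a conjecture.
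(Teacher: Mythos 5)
This statement is posed in the paper as a \emph{conjecture}: the paper offers no proof, only a two--sentence heuristic (stabilize cyclically by gluing the system $\calS$ at all boundaries, deal with the degeneracies that produce disconnected dual graphs, then thicken to restore a unit). Your proposal is an honest and accurate elaboration of exactly that intended strategy, and you correctly locate the genuine obstruction --- the free genus-- and boundary--defects and the disconnected complementary regions mentioned in the Appendix, which destroy the control that ``all arcs run to boundary $0$'' provides in $\Gtree$ --- as the reason the cyclic analogues of Lemmata \ref{fatlemone}--\ref{fatlemtwo} and Propositions \ref{stimprop} and \ref{structureprop} are not yet available. So the comparison verdict is: you have reproduced the paper's own sketch, not supplied a proof; your Step~2 is precisely the open part, and your final paragraph concedes as much. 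That is the right assessment of the state of the statement, but it means the proposal should not be read as closing the conjecture.

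Two smaller corrections. First, $\Gtree$ is \emph{not} a cyclic suboperad of $\Arc$ --- the paper's theorem explicitly says these are suboperads ``(not cyclic)'' --- so the phrase ``inclusion $\Gtree\subset\Arc$ of cyclic operads'' is wrong; the injectivity argument you sketch (stabilization at $i\neq 0$ preserves the all-arcs-hit-$0$ condition) is itself suspect, since the maps $\stg$, $\sth$ applied at a boundary $i\neq 0$ compose with elements of $\Gtree_1(1)$ whose arcs run from boundary $0$ to boundary $1$ of the glued piece, and after gluing at $i$ these become arcs between two boundaries neither of which is the outer boundary $0$ of the ambient surface; the result therefore leaves $\Gtree$, and one must argue separately that no new identifications are imposed on the image of $\Fat\St\Gtree$. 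Second, for the last step the recognition principle requires a genuine unital $E_\infty$ operad acting; the paper's $\Omega$--spectrum theorem is applied to operads \emph{containing} $\Fat\St\LGtree$ as a suboperad, which is the mechanism you invoke, but this presupposes that $\Fat\St\Arc$ has already been shown to be a bona fide operad with $0$--term a point --- i.e.\ it presupposes the resolution of exactly the difficulties you defer.
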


 It will be interesting to figure out which $\Omega$ spectrum this is.
 Since stabilization by the element $\G$
 embeds the moduli space $M^{1^{n}}_{g,n}$
 as a piece of
  the boundary of the moduli space $M^{1^{n}}_{g+1,n}$, we expect that it will
 be closely related to the Segal--Tillmann picture \cite{Til1,Til2}.

\subsection{Outlook: Generalizing framed little discs and new decompositions}
One open question is the full role of the spaces $\Gtree_g(1)$ that
have been discussed below. In the case of genus one $\Arc_0(1)$ made
the difference between framed little discs and little discs.
Similarly, $\Arc_0(1)$ leads to a bi--crossed product $\CGtree$. One
could wonder what the inclusion the space $\Gtree_g(1)$ signifies.

We can decode some of its structure.

\begin{prop}
An element $\a$ of $\Gtree_g(1)$ which is untwisted has at most
$3g+1+[(g-1)/3]$ arcs and this number is realized.
\end{prop}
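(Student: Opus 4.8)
The plan is to translate the problem into a purely combinatorial count of edges of a graph embedded in a surface of genus $g$ with one boundary component, subject to the constraints that the element be untwisted and quasi--filling is \emph{not} assumed, but the bound should be attained at a quasi--filling configuration. First I would recall from \S\ref{arcsection} and Proposition \ref{Euler} that for an arc family $\a$ on $F_{g,1}$ one has $\chi(\a)=|\comp(\a)|-|E_{\Gamma(\a)}|\geq \chi(F_{g,1})=1-2g$, with equality precisely when all complementary regions are polygons. Since an untwisted element has no two parallel (homotopic) arcs, and an arc graph with a single boundary has a single vertex $v_0$ of valence $|E_\Gamma|$, the key is to bound $|E_\Gamma|$ given $g$. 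The combinatorial data is that of a fat/ribbon graph with one vertex whose thickening is $F_{g,1}$; by the dual ribbon-graph description promised in the Appendix, such a graph has $V=1$, $E=|E_\Gamma|$, and the number of boundary faces $b=|\comp(\a)|$ satisfies $V-E+b=2-2g$, i.e.\ $b=2g-1+E$ when we do not yet demand the faces be polygons.

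Next I would impose the ``untwisted'' condition and the absence of redundant arcs. An arc bounding (together with part of $\partial F$) a disc region that is a bigon is exactly the parallel/twisted situation excluded by condition (iii)--(iv) of the embedding definition and by the definition of untwisted. So each complementary region $R$ must be a polygon with at least $3$ sides, or carry some genus/extra boundary. Writing the Euler defect $\edefect(\a)=\sum_R(\chi(R)-1)$ and counting sides, one gets a linear relation between $E$, the number of regions, and the total side-count $\sum_R(\#\text{sides of }R)=2E$ (each arc borders two region-sides), combined with $\#\text{sides of }R\geq 3$ for each polygonal region. Feeding $b=2g-1+E$ and $\sum\#\text{sides}=2E$ into $3b\leq 2E$ would give $3(2g-1+E)\leq 2E$, which is absurd for $E>0$ — so the naive estimate goes the wrong way, signalling that the correct count must instead \emph{maximize} $E$, and the binding constraint comes from the requirement that the single vertex actually realize a genus-$g$ surface, i.e.\ from how few regions a one-vertex ribbon graph of genus $g$ can have. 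I expect the sharp statement to be: a one-vertex fat graph of genus $g$ with $E$ edges has at least one boundary face, and the minimum number of faces is achieved by the standard $2g$-gon handle presentation, giving a recursion that, after accounting for the ``$\Gtree$'' restriction (all arcs hit boundary $0$) and the untwisting, produces the extremal value $3g+1+[(g-1)/3]$.

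The main obstacle — and the step I would spend the most care on — is establishing the exact extremal count $3g+1+[(g-1)/3]$ rather than merely a bound of order $3g$; the floor term $[(g-1)/3]$ strongly suggests a block structure in which handles are grouped three at a time (reminiscent of the $3$--valence considerations and the periodicity seen elsewhere in the paper, cf.\ the $E_{2k}/E_{2k+1}$ discussion). Concretely, I would proceed by induction on $g$: using Lemma \ref{structurelemma}, Lemma \ref{decomplemma1} and the decomposition (\ref{decompeq}), write $\a$ as a composition of $g$ handle-building pieces $\G$ or $\H_b$ together with an untwisted quasi--filling core in $\Gtree_0(1)$ (which, being a genus-zero one-boundary configuration with polygonal regions, has a bounded, computable number of arcs), and track how many arcs each handle-insertion can add without creating a twist. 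I expect each $\G$ to contribute a fixed number of arcs, while every third handle allows one extra arc because of the way three handles can be packed around the boundary without producing homotopic first/last edges; summing the contributions yields $3g+1+[(g-1)/3]$. Realizability of the bound would then be exhibited by explicitly drawing the optimal arc graph built from these blocks, checking by the ribbon-graph Euler count that it lives on $F_{g,1}$ and that no two of its arcs are homotopic, i.e.\ that it is genuinely untwisted.
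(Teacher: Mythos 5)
There is a genuine gap: the decisive combinatorial step --- establishing the exact maximum, and in particular where the term $[(g-1)/3]$ comes from --- is never carried out. Your first Euler-characteristic estimate you concede ``goes the wrong way''; your second (minimizing the number of faces of a one-vertex fat graph) and your third (induction on $g$ over the $\G$/$\H_b$ decomposition, with ``every third handle allows one extra arc'') are left entirely at the level of ``I expect'', with no derivation of the bound and no construction realizing it. There is also an error in the setup: an element of $\Gtree_g(1)$ lives on $F_{g,2}$, a genus $g$ surface with \emph{two} boundary components labeled $0$ and $1$ (so $\chi=-2g$, not $1-2g$), the arc graph has two vertices rather than one, and every arc runs from boundary $0$ to boundary $1$. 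This two-boundary alternation is precisely the feature that produces the sharp count, and it is discarded by your one-vertex, one-boundary ribbon-graph model.

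For comparison, the paper's argument is a direct cut-and-count in geometric topology. Cutting along arcs, at most $2g+1$ of them can be non-separating, since each non-separating cut raises the Euler characteristic by $1$ and one must stop upon reaching a disc with $\chi=1$ starting from $\chi(F_{g,2})=-2g$. The resulting disc is a $4(2g+1)$-gon whose sides alternate between pieces of boundary $0$, cut-arcs, pieces of boundary $1$, and cut-arcs. Any further arc is separating and must join a side lying on boundary $0$ to a side lying on boundary $1$ without becoming parallel to an existing arc, hence must cut off at least an octagon; this caps the number of additional arcs at $[4(2g+1)/6]$ and gives the stated total. Realizability is then shown by exhibiting a configuration saturating both steps. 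None of this appears in your proposal, so as written it does not prove the proposition.
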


\begin{proof}
We begin cutting along the arcs. The maximal number of
non--separating cuts is $2g+1$, since the Euler characteristic of
the underlying surface is $2-2g+2$ and each non--separating cut
increases the Euler characteristic by $1$. There are no more
non--separating cuts when we are left with a disc of Euler
characteristic $1$. This disc will have a boundary made up of a
$4(2g+1)$--gon. The sides are labeled by sequences where every 4th
element is a part of the boundary $0$, every $4n+2$ element is a
part of the boundary $1$ and the $4n+1$st and $4n+3$rd elements
correspond to the cut edges --- where each edge appears twice. For
$g>0$, we can insert a maximum of $[4(2g+1)/6]=g+[(g-1)/3]$ arcs,
since each separating arc has to cut off at least an octagon. This
is because each arc has to run from $0$ to $1$ and these cannot be
only separated by one edge since otherwise the new arc and the arc
represented by the edge would be parallel.
\end{proof}

\begin{cor}
The dimension of the top dimensional cells of $\Gtree^1_g(1)$, $g>0$
is $3g+[(g-1)/3]+2$.
\end{cor}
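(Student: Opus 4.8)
The plan is to reduce the statement to a count of arcs and then extract that count from the preceding Proposition together with Corollary~\ref{twistedcor}.

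First I would record that for an arc graph $\g\in\Gtree(1)$ the model cell $\cell(\g)=\prod_{v\in V(\g)\setminus\{v_0\}}\Delta^{\val v-1}$ has only the single factor coming from the vertex $v_1$ on the boundary $1$. Since every edge of a graph in $\Gtree$ runs from the boundary $0$ to a boundary $i\neq 0$, and for $n=1$ that boundary can only be the boundary $1$, each edge contributes exactly one flag at $v_1$; hence $\val{v_1}=|E_{\Gamma(\g)}|$ and $\dim\cell(\g)=|E_{\Gamma(\g)}|-1$. So a top cell of $\Gtree^1_g(1)$ is a cell $\cell(\g)$ whose arc graph $\g\in\Gtree_g(1)$ carries the maximal possible number of arcs, and it suffices to show this maximum equals $3g+3+[(g-1)/3]$.

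For the upper bound I would use Corollary~\ref{twistedcor} to write an arbitrary $\a\in\Gtree_g(1)$ as $\a=\t_0\circ_1\a'\circ_1\t_1$ with $\t_0,\t_1\in\Arc_0(1)$ and $\a'$ untwisted. Since $\Arc_0(1)$ is supported on genus zero cylinders and gluing two distinct surfaces along one boundary each is additive on the genus, $\a'$ again lies in $\Gtree_g(1)$, so the preceding Proposition gives $|E_{\Gamma(\a')}|\leq 3g+1+[(g-1)/3]$. Then I would argue that composing with an element of $\Arc_0(1)$ raises the edge count by at most one: any such element is supported on the cylinder $\A_{0,2}\simeq S^1$, hence has at most two arcs and therefore at most one interior separatrix, and in the gluing that lone separatrix cuts at most one band of the other surface into two. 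Applying this once at the boundary $1$ and once at the boundary $0$ gives $|E_{\Gamma(\a)}|\leq 3g+3+[(g-1)/3]$.

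For sharpness I would start from a maximal untwisted $\g_0\in\Gtree_g(1)$, supplied by the Proposition, with $N_0=3g+1+[(g-1)/3]$ arcs and with a metric giving every arc positive weight. I would twist $\g_0$ at the boundary $1$ by an angle $a\in(0,1)$ chosen so that the separatrix of $T_a$ lands strictly inside one of the bands meeting the boundary $1$, and then twist the result in the same way at the boundary $0$. The point to verify is that each such twist genuinely splits a band into two arcs that are \emph{not} parallel: the two halves of the split band get carried through the two strands of the twisting cylinder, which are themselves non--homotopic arcs on that cylinder, so the two halves end with different endpoints and differ by a loop around the glued curve; hence no re--consolidation takes place, the result still lies in $\Gtree_g(1)$, and it carries $N_0+2$ arcs. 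Combining the two bounds, the maximal number of arcs of an element of $\Gtree_g(1)$ is $3g+3+[(g-1)/3]$, and a top cell of $\Gtree^1_g(1)$ therefore has dimension $3g+3+[(g-1)/3]-1=3g+[(g-1)/3]+2$.

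The step I expect to be the main obstacle is precisely this last non--parallelism claim: one must make sure that a twist creates a genuinely new arc rather than two parallel copies which the normalization of \S\ref{gluingpar} would immediately merge back into one. Once that is pinned down, what remains is routine bookkeeping inside the cutting/gluing formalism.
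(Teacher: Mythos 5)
Your overall route is the paper's: the dimension of $\cell(\g)$ for $\g\in\Gtree_g(1)$ is one less than the number of arcs, the untwisted maximum $3g+1+[(g-1)/3]$ comes from the preceding Proposition, and the extra $+2$ comes from twisting at both boundaries. The reduction $\dim\cell(\g)=|E_{\Gamma(\g)}|-1$ and the upper bound via Corollary~\ref{twistedcor} (a composition with an element of $\Arc_0(1)$ introduces at most one new separatrix, hence at most one new arc, and the genus is unchanged) are correct and in fact considerably more detailed than the paper, whose entire argument is that one can ``add twists at both ends as soon as $g>0$.''

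The gap is in the realization step, and it sits exactly at the point you flagged as the main obstacle but then only half-checked. After moving a marked point into the interior of a band you verify that the two halves of that band are not parallel to \emph{each other}; you do not verify that no \emph{other} pair of arcs consolidates. Moving a marked point can make two pre-existing arcs parallel, namely any pair that cobounds a rectangle whose only obstruction to parallelism was the marked point you just moved. This is precisely what happens on the cylinder: your argument as written would produce three arcs on $F_{0,2}$ after twisting at both ends, whereas $\A^0_{0,2}=S^1$ supports at most two --- once both marked points have been moved, the old first and last arcs cobound a marked-point-free rectangle and re-consolidate. This is also why the statement carries the hypothesis $g>0$, which your proof never uses. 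The repair is to actually use the untwistedness of $\g_0$: being untwisted at a boundary says exactly that the first and last arcs there do not become homotopic when the marked point is allowed to cross, so the first twist consolidates nothing; one must then check (and here $g>0$ together with the structure of the maximal configurations from the Proposition enters) that after the first twist no pair of arcs is separated only by the marked point at the other boundary, so that the second twist also contributes a genuinely new arc.
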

\begin{proof}
For the total count, we can add twists at both ends as soon as
$g>0$. Thus keeping in mind that the dimension is one less than the
number of arcs, we arrive at the formula above.
\end{proof}

Thus $\Gtree$  cannot give the framed little discs as the dimension
only grows linearly in $g$ and not quadratically. On the other hand
the dimensions fit with the dimension of spheres, so that it may
look like a marked point on the boundary of the little cubes.

By Lemma \ref{finerlem}, however, we see that we get a new decomposition
for the $E_n$ operads in terms of $E_1$ respectively $E_2$ and elements
of $\Gtree_g(1)$. Notice though that if we decompose $\cup_2$ we
can decompose it as $\cup_1$ and an element which is not in $\CGtree(1)$.
However this element contains exactly one braid. Furthermore, we
see that the elements $\cup_i$ are generated by $\cup_1$ and particular
braid elements of $\Gtree_{[(i-1)/2]}(1)$.

\begin{conj}
For each $n$ there is a suboperad of $\St\Gtree(1)$ such that the
operad $\St\LGtree^{(n)}$ (an $E_n$ operad without $0$--term) is a
bi--crossed product of this suboperad and operad $\Lintree$ (an
$E_2$ operad without $0$--term).
\end{conj}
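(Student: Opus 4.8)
The plan is to construct, for each $n\ge 2$, an explicit ``gadget monoid'' $\mathcal{P}^{(n)}\subseteq\St\Gtree(1)$ and to identify $\St\LGtree^{(n)}$ with the bi--crossed product $\Lintree\bowtie\mathcal{M}^{(n)}$ in the sense of \cite{cact}, where $\mathcal{M}^{(n)}$ is the operad built on the monoid $\mathcal{P}^{(n)}$ (so $\mathcal{M}^{(n)}(p)=(\mathcal{P}^{(n)})^{\times p}$ with the diagonal/multiplication composition recalled before Proposition~\ref{biprop}), so that its underlying collection is $\Lintree(p)\times(\mathcal{P}^{(n)})^{\times p}$. The candidate for $\mathcal{P}^{(n)}$ is the submonoid of $\St\Gtree(1)$ generated under $\circ_1$ and scaling by the unit ${\bf 1}$, by $\Arc_0(1)\simeq S^1$ (the twists $T_a$), and by the quasi--filling ``braid cells'' living in the genus $\le[n/2]$ part of $\St\Gtree(1)$ --- concretely the handle/braid elements that, by Lemma~\ref{finerlem} and the discussion preceding this conjecture, occur in the canonical decomposition of $\cup_i$ in terms of $\cup_1$ and a braid element of $\Gtree_{[(i-1)/2]}(1)$ for $i\le n$. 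A first check is that $\mathcal{P}^{(n)}$ is closed under $\circ_1$ at filtration level $n$: this is the $\K^{(n)}$--compatibility of the operadic composition established for $\St\LGtree$ as a cellular $E_\infty$ operad, specialized to arity $1$.

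The main step is a ``bundle'' statement generalizing Lemma~\ref{twistedlem}: in each arity $p$ every $\a\in\St\LGtree^{(n)}(p)$ should admit a canonical presentation $\a=\Gamma(\a';\beta_1,\dots,\beta_p)$ with $\a'\in\Lintree(p)$ and $\beta_i\in\mathcal{P}^{(n)}$, yielding global coordinates $\St\LGtree^{(n)}(p)\cong\Lintree(p)\times(\mathcal{P}^{(n)})^{\times p}$. To produce the $\beta_i$ I would cut off a neighbourhood of each input boundary $i$ along a separating curve chosen so that the residual inner boundary is untwisted (Lemma~\ref{cuttinglem}); the cut--off piece lies in $\St\Gtree(1)$, is quasi--filling by Lemma~\ref{eulerlem}, has genus bounded by the filtration level, and --- because the residual part must satisfy the anti--compatible linear order condition defining $\LGtree$ --- is forced into $\mathcal{P}^{(n)}$. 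Canonicity is to be obtained by copying the section of Lemma~\ref{twistedlem}: use the first leaf of the first band at each input, together with a canonical choice of the handle--cutting curves for the genus part, to pin the marked points and the cut loci so that the whole decomposition depends continuously on $\a$.

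Once the trivialization is in hand, I would transport the operadic composition of $\St\LGtree^{(n)}$ through it and verify that it has the shape of a bi--crossed product, i.e.\ extract a matched pair consisting of a left action of $\mathcal{M}^{(n)}$ on $\Lintree$ and a right action of $\Lintree$ on $\mathcal{M}^{(n)}$. This amounts to the bookkeeping of pushing all handle/twist gadgets past the linear--tree part after a composition and collecting them at the input slots, governed by the identities of Proposition~\ref{structureprop} ($\a\circ_i\G=\G\circ_1\a$, $\a\circ_i\H_a=\H_b\circ_1\a$, $\G\circ_1\H_a=\H_a\circ_1\G$, $\H_a\circ_1\H_b=\H_{b-1}\circ_1\H_{a-1}$) together with $T_a\circ T_b=T_{a+b}$; the genus--zero sub--case is exactly the argument for $\CGtree$ in Proposition~\ref{biprop} and \cite{cact}, now carried out with the enlarged gadget monoid. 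That $\Lintree$ and $\mathcal{M}^{(n)}$ embed as suboperads (namely as $\{(\a';{\bf 1},\dots,{\bf 1})\}$ and $\{({\bf 1};\beta_1,\dots,\beta_p)\}$) is then immediate.

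The hard part will be the canonical--section claim in the presence of genus. The remark following Lemma~\ref{twistedlem} warns precisely that on $\Gtree$ with genus $\ge1$ the naive section is only piecewise linear and becomes discontinuous as soon as a braiding appears, and the proof of Lemma~\ref{twistedlem} leaned essentially on compatibility of the cyclic orders, which fails here. So the real content is to find a canonical, continuous choice of handle--cutting data --- presumably by exploiting that in $\St\Gtree$ the unstable genus has been collapsed and one may work only with quasi--filling representatives --- supplying the genus--$\ge1$ replacement for ``compatible cyclic orders''. A secondary obstacle is pinning down $\mathcal{P}^{(n)}$ exactly and proving it is genus--bounded in the way that reproduces the $E_n$ (and not $E_{n+1}$) filtration; this is where one genuinely needs the comparison between the genus filtration and Berger's complexity filtration $\K^{(n)}$, which the paper only verifies in arity $2$, so part of the work is to upgrade that comparison to all arities.
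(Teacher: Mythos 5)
The statement you are proving is labeled a \emph{conjecture} in the paper, and the paper supplies no proof of it; so there is nothing to compare your argument against, and the only question is whether your proposal actually closes the conjecture. It does not, and to your credit you say so yourself. Your strategy is the natural one and matches the paper's own hints (the decomposition of $\cup_i$ into $\cup_1$ and a braid element of $\Gtree_{[(i-1)/2]}(1)$, Lemma \ref{finerlem}, and the $\G$/$\H$ commutation rules of Proposition \ref{structureprop}), but the two places where you defer the work are exactly where the conjecture lives. First, the trivialization $\St\LGtree^{(n)}(p)\cong\Lintree(p)\times(\mathcal{P}^{(n)})^{\times p}$ requires a \emph{continuous canonical} section, and the remark following Lemma \ref{twistedlem} states explicitly that the section construction breaks down in genus $\geq 1$ precisely when braiding occurs --- which is the generic situation in $\St\LGtree^{(n)}$ for $n\geq 3$. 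Passing to the stabilization does not obviously help here: by Corollary \ref{quasicor} the elements of $\St\LGtree$ are quasi--filling, so the genus is carried globally by the arc graph and is not a ``defect'' that Lemma \ref{decomplemma1} lets you slide to a boundary; your proposed cut near each input boundary presupposes that the excess complexity localizes at the inputs, and no argument is given that a family of such separating curves can be chosen continuously (or even that the genus splits among the inputs at all, rather than being shared between them).

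Second, your identification of the candidate monoid $\mathcal{P}^{(n)}$ and the claim that it reproduces the $E_n$ (rather than $E_{n\pm1}$) stratum rests on matching the genus/twist grading against Berger's $\K^{(n)}$ filtration in all arities, which the paper only establishes in arity $2$; the closure argument you give via ``$\K^{(n)}$--compatibility of the composition'' is not available for arity--$1$ elements, since $\phi^*_{ij}$ and hence the filtration are defined through projections to arity $2$ and are vacuous on $\calO(1)$. So what you have is a credible programme with the correct ingredients (the matched--pair bookkeeping via Proposition \ref{structureprop} and the genus--zero case of Proposition \ref{biprop} would indeed finish the job once the trivialization exists), but the conjecture remains open at exactly the step you flag: producing the genus--$\geq1$ substitute for the compatibility of cyclic orders that powered Lemma \ref{twistedlem}.
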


\renewcommand{\theequation}{A-\arabic{equation}}
\renewcommand{\thesection}{A}
\setcounter{equation}{0}  
\setcounter{subsection}{0}

\section*{Appendix: Graphs, dual graphs and compactifications}
In \cite{del} (see also \cite{hoch1}) we introduced a dual graph for
quasi--filling arc graphs. Here we extend this notion to all graphs.
One upshot is that we can make contact with Kontsevich's stabilization in
this way.
\subsection{Dual graph}
The dual graph of an arc graph $\a$ is the labeled graph
(semi-stable labeled ribbon graph) $\Gamma(\a)$ whose vertices are the
complementary regions of the arc graph. We will write $v(R)$ for the
vertex corresponding to a region $R$. Edges correspond to the arcs
of the arc graph and the  vertices of an edge to the complementary
region(s) that are bordered by the respective arc. A flag will be a
pair of an arc and a choice of orientation for it or equivalently a
side of the arc. Notice that loops are allowed. Since the surface
$\Sigma$ was connected, the graph will be connected.

There is a bit more structure on these graphs, although they generally fall
short of being ribbon graphs. To make this discussion more symmetric, we will
use one of the equivalent versions for depicting arcs; see \ref{picturepar}.
This is, we move the end--points of the edges off the marked points on the boundary and move
them apart along the orientation of the boundary, so that the arcs are all disjointly embedded, do not hit the endpoints, and their linear order from the marked ribbon graph
coincides with the linear order given by counting them off starting at the marked point
of a boundary and going around that boundary in its induced orientation.
Fix a complementary region $R$. $R$ is an oriented
surface with boundary. There are two types of boundary components, those which contain arcs of the original graph and those who do not. The former
are actually $2n$-gons whose sides alternate between pieces of the boundary
and arcs, while the latter consist of a full boundary component of $\Sigma$.
Let $b(R)$  be the number of boundary components of the
former type, $f(R)$ the boundary components of the latter type and fix
$g(R)$ to be the genus of $R$ after gluing in discs into the boundary.
We set $\defect(R)=(g(R),b(R),f(R))$.

Now each boundary of $R$ containing arcs has an induced orientation, hence
we get a cyclic order on these arcs. Formally this means that at
each vertex $v$ of $\Gamma(\a)$ we have an action of ${\mathbb N}$.
Each orbit corresponds to a set of flags stemming from one of the
boundaries of $R$. In other words the set of flags $F(v)$ is
partitioned into subsets $F(v)=F_1(v)\amalg\dots\amalg F_k(v)$ and
each $F_i(v)$ has a cyclic order. Moreover these cyclic orders fit
together to give an action of $\mathbb N$ on the set of all flags
$F(\Gamma(\alpha))$, by combining the previous action with the map
$\imath$ as usual. The orbits of this map, which we call $N$ are
still called cycles of $\Gamma$.

Lastly there is a marking $mk$ for each cycle of the graph. This is
the first flag of the cycle which corresponds to the flag of the
edge containing the marked point of the boundary.

\begin{df}
The dual graph of $\a$ is defined to be the graph
$(\Gamma,N,\defect,mk)$.
\end{df}

An example of the dual graph is given in Figure \ref{cup2graphs}.
Unlike in the situation of $\Tree_{cp}$ where the dual graphs are
cacti, the advantage of arc graphs may be more obvious.

\begin{figure}
\epsfxsize = 0.8 \textwidth \epsfbox{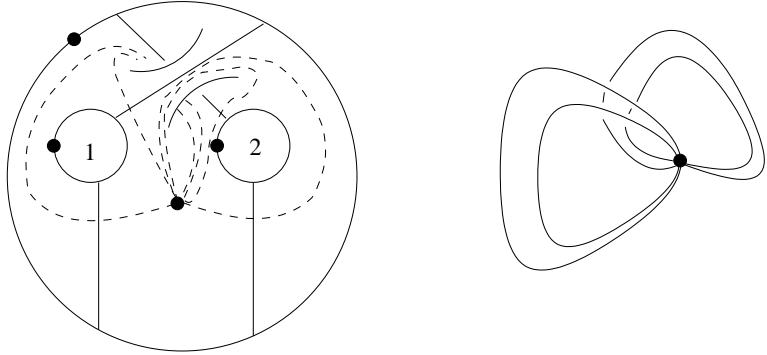}
\caption{\label{cup2graphs} The arc graph for the $\cup_2$ product
and its alternate depiction as a ribbon graph}
\end{figure}

The data of $b(R)$ is actually redundant, since $b(R)$ is the number
of orbits of the cyclic action of $\mathbb N$ on the flags at that
vertex.

\begin{rmk}
 In the case of $\Gtree$ we always have that $f(R)=0$, since there are no boundaries
which are not hit by arcs.
\end{rmk}

\begin{rmk}
The use of the dual graph now gives a re--interpretation of Penner's
compactification \cite{Penner} in terms of Kontsevich's \cite{K1}
and vice--versa.
\end{rmk}
\subsection{Stabilization}
 The stabilization will then have the effect of setting
the label $g(R)$ of a vertex to zero or in the case that $b(R)\neq
0$ the vertex will be split into the number of boundaries.

\begin{rmk}
Although in general the graph can become disconnected this does not
happen for $\Gtree$. The reason is that if it were disconnected,
then there would be a separating curve which does not cut any
of the arcs. This is impossible if all the arcs run to zero.
\end{rmk}

\begin{figure}
\epsfxsize =  \textwidth \epsfbox{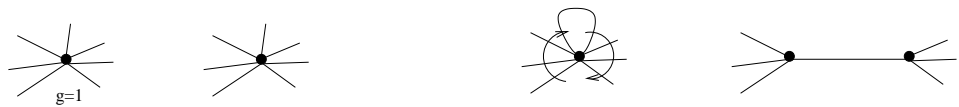}
\caption{\label{graphstable} The stabilization of a vertex using
$\G$ or $\H$ in the case of $\Gtree$.}
\end{figure}

\end{document}